\newcommand\bb{\mathbb}
\renewcommand*\env@matrix[1][*\c@MaxMatrixCols c]{%
  \hskip -\arraycolsep
  \let\@ifnextchar\new@ifnextchar
  \array{#1}}
\theoremstyle{plain}
\newtheorem{thm}{Theorem}[section]
\newtheorem{lem}[thm]{Lemma}
\newtheorem{prop}[thm]{Proposition}
\newtheorem{cor}[thm]{Corollary}
\theoremstyle{defn}
\newtheorem{defn}[thm]{Definition}
\newtheorem{exmp}[thm]{Example}
\theoremstyle{remark}
\newtheorem{rem}{Remark}
\theoremstyle{plain}
\newtheorem{question}[thm]{Question}
\newtheorem{notation}[thm]{Notation}
\DeclareMathOperator{\Gal}{Gal}
\DeclareMathOperator{\sheafhom}{\mathscr{H}\text{\kern -3pt {\calligra\large om}}\,}
\DeclareMathOperator{\Br}{Br}
\DeclareMathOperator{\bPic}{{\bf Pic}}
\DeclareMathOperator{\Prym}{Prym}
\DeclareMathOperator{\PGL}{PGL}
\newcommand{\Deltatilde}{\tilde{\Delta}}
\newcommand{\kbar}{{\overline{k}}}
\newcommand{\Ptilde}{\tilde{P}}
\newcommand{\Ptildem}[1]{\tilde{P}^{(#1)}}
\newcommand{\Pm}[1]{{P}^{(#1)}}
\newcommand{\piDelta}{\varpi}
\newcommand{\piDoubleCover}{\tilde{\pi}}
\newcommand{\kk}{{\mathbf k}}
\newcommand{\Deltabar}{\Deltatilde^{-}}
\newcommand{\piDeltabar}{{\piDelta^{-}}}
\let\phi\varphi
\title{Rationality of real conic bundles with quartic discriminant curve}
\author{Lena Ji}
\address{Department of Mathematics, University of Michigan, 530 Church Street, Ann Arbor, MI 48109-1043}
\email{lenaji.math@gmail.com}
\urladdr{http://www-personal.umich.edu/\~{}lenaji}
\author{Mattie Ji}
\address{Department of Mathematics, Brown University, Box 1917, 151 Thayer Street, Providence, RI 02912}
\email{mattie\_ji@brown.edu}
\begin{document}

\begin{abstract}
We study real double covers of $\mathbb P^1\times\mathbb P^2$ branched over a $(2,2)$-divisor, which are conic bundles with smooth quartic discriminant curve by the second projection. In each isotopy class of smooth plane quartics, we construct examples where the total space is rational. For five of the six isotopy classes we construct $\mathbb C$-rational examples with obstructions to rationality over $\mathbb R$, and for the sixth class, we show that the models we consider are all rational. Moreover, for three of the five classes with irrational members, we characterize rationality using the real locus and the intermediate Jacobian torsor obstruction of Hassett--Tschinkel and Benoist--Wittenberg. These double cover models were introduced by Frei, Sankar, Viray, Vogt, and the first author, who determined explicit descriptions for their intermediate Jacobian torsors.
\end{abstract}

\maketitle

\section{Introduction}

A fundamental question in algebraic geometry is the birational classification of algebraic varieties. The simplest varieties are those that are rational, i.e. birational to projective space. In this paper, we consider rationality over the field \(\mathbb R\) of real numbers, and when we write (stable/uni-)rationality without reference to the ground field, we will mean over \(\mathbb R\). We study the rationality of real conic bundle threefolds over \(\mathbb P^2\).

The discriminant double cover \(\Deltatilde\to\Delta\subset\mathbb P^2\) parametrizing the singular fibers of a conic bundle \(X\to\mathbb P^2\) is an important invariant that determines many of the properties of \(X\). If \(\deg\Delta\leq 3\) and \(X(\mathbb R)\neq\emptyset\), then results of Iskovskikh show \(X\) is rational, and if \(\deg\Delta\geq 6\) then \(X\) is irrational by work of Beauville (see Section~\ref{sec:ConicBundlesBg}). In this paper, we consider the case when \(\deg\Delta=4\). In this setting, \(X\) is \(\mathbb C\)-rational \cite{Iskovskikh-rationality-cbs}, but in general the geometric rationality construction need not descend to \(\mathbb R\), even if \(X(\mathbb R)\neq\emptyset\). It is natural to ask about the relationship between the real properties of \(\Deltatilde\to\Delta\) and the rationality of \(X\).
The classification of real smooth plane quartics \(\Delta\) has been the subject of classical interest:
Klein showed that in the moduli space of real plane quartics, the complement of the locus of singular quartics has six connected components, each corresponding to a real isotopy class \cite{Klein1876}.
These six real isotopy classes had earlier been classified by Zeuthen---empty, one oval, two nested ovals, two non-nested ovals, three ovals, and four ovals---who showed that curves in these classes have \(4\), \(4\), \(4\), \(8\), \(16\), and \(28\) real bitangents, respectively \cite{Zeuthen1874}.
We exhibit the following rationality behavior of the threefold \(X\) for different real isotopy classes of the quartic \(\Delta\):
\begin{thm}\label{thm:main}
Let \((**/*)\) denote the set of geometrically standard conic bundles \(X\to\mathbb P^2\) over \(\mathbb R\) with smooth quartic discriminant curve \(\Delta\) of topological type \(*\) and discriminant cover \(\Deltatilde\) of topological type \(**\).
\begin{enumerate}
	\item\label{item:0_oval} \((\emptyset/\emptyset)\) contains rational members, irrational members with points, and pointless members;
	\item\label{item:1_oval} \((\emptyset/\text{1 oval})\) contains both rational members and irrational members;
	\item\label{item:2_nonnested} \((\emptyset/\text{2 non-nested ovals})\) contains rational members, irrational members with connected real loci, and irrational members with disconnected real loci;
	\item\label{item:disconnected-types} \((\emptyset/\text{2 nested ovals})\) and \((\emptyset/\text{3 ovals})\) each contain both rational members and irrational members with disconnected real loci;
	\item\label{item:4_ovals} \((\emptyset/\text{4 ovals})\) contains rational members; and
	\item\label{item:Deltatildepoint_rational} If \(\Deltatilde(\mathbb R)\neq\emptyset\), then every member of \((**/*)\) is rational.
\end{enumerate}
\end{thm}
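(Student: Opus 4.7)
My plan is to exhibit a real rational curve in $X$ that is a regular section of the first projection $\pi_1\colon X\to\mathbb P^1$, and then invoke the rationality of smooth pointed quadric surfaces. First, I would unpack the hypothesis $\Deltatilde(\mathbb R)\neq\emptyset$: since $\Delta$ is smooth, the cover $\varpi\colon\Deltatilde\to\Delta$ is étale, and a real point $\tilde p\in\Deltatilde(\mathbb R)$ lies over a real point $p\in\Delta(\mathbb R)$. The fiber $\varpi^{-1}(p)$ consists of two points; being Galois-stable and containing the real point $\tilde p$, its other point is real as well. Thus the singular conic fiber $X_p=L_1\cup L_2$ of the conic bundle has both components $L_1,L_2$ defined over $\mathbb R$.

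Next, I would exploit the quadric surface bundle structure $\pi_1\colon X\to\mathbb P^1$ obtained by composing the double cover $X\to\mathbb P^1\times\mathbb P^2$ with projection to the first factor. The fiber of $\pi_1$ over $t\in\mathbb P^1$ is the double cover of $\{t\}\times\mathbb P^2\cong\mathbb P^2$ branched over the restriction of the $(2,2)$-divisor, which is a plane conic; the resulting surface is therefore a quadric in $\mathbb P^3$, smooth over a dense open subset of $\mathbb P^1$. The key observation is that $L_1$ is a regular section of $\pi_1$. Indeed, the double cover $X\to\mathbb P^1\times\mathbb P^2$ restricted to $\mathbb P^1\times\{p\}$ is a cover of $\mathbb P^1$ totally ramified at a single point; the hypothesis that it splits over $\mathbb R$ (which is precisely $\tilde p\in\Deltatilde(\mathbb R)$) means each component $L_i$ maps isomorphically onto $\mathbb P^1\times\{p\}$, and composing with the identification $\mathbb P^1\times\{p\}\cong\mathbb P^1$ realizes $L_1$ as a section of $\pi_1$.

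Finally, the generic fiber of $\pi_1$ is a smooth quadric surface over $K=\mathbb R(\mathbb P^1)$ equipped with a $K$-rational point supplied by $L_1$. Projecting from that point gives a $K$-birational map from this quadric to $\mathbb P^2_K$, so $X$ is $\mathbb R$-birational to $\mathbb P^1\times\mathbb P^2$, hence rational over $\mathbb R$. I do not foresee any significant obstacle along this route; the only points warranting brief verification are that both preimages of $p$ in $\Deltatilde$ are real when one is (immediate from Galois-equivariance of $\varpi$ together with the fact that the fiber has only two points), and that the splitting of the degenerate fiber $X_p$ into two \emph{real} components is equivalent to $\tilde p$ being real (as opposed to the two components being a Galois-conjugate pair).
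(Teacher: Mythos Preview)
Your proposal addresses only part~\eqref{item:Deltatildepoint_rational} of the theorem. Parts~\eqref{item:0_oval}--\eqref{item:4_ovals} are existence statements: one must produce, in each topological class of \((\Deltatilde,\Delta)\), explicit conic bundles exhibiting the claimed rationality behaviour. The paper devotes Section~\ref{sec:ExamplesConstruction} to these constructions, using the signature analysis of the quadric surface bundle, the IJT obstruction (Section~\ref{sec:IJT-background}), and the real-topology criteria of Section~\ref{sec:Yconnectedcomponents}. None of this is present in your proposal.

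Even restricted to part~\eqref{item:Deltatildepoint_rational}, there is a genuine gap. The theorem concerns \emph{arbitrary} geometrically standard conic bundles \(X\to\mathbb P^2\), whereas your argument assumes from the outset that \(X\) is the double cover of \(\mathbb P^1\times\mathbb P^2\) branched over a \((2,2)\)-divisor, so that the quadric surface bundle \(\pi_1\) exists. Only the special models \(Y_{\Deltatilde/\Delta}\) of Section~\ref{sec:DoubleCover} carry this structure; a general \(X\) with the same discriminant cover differs from \(Y_{\Deltatilde/\Delta}\) by a constant Brauer class (Proposition~\ref{prop:Yproperties}\eqref{item:alluptoconstantBrclass}), and you have not argued that rationality transfers across that twist. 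The paper's argument for~\eqref{item:Deltatildepoint_rational}, cited as \cite[Proposition~6.1]{FJSVV} and sketched in Section~\ref{sec:ConicBundlesBg}, works directly with the conic bundle structure on any such \(X\): one takes a pencil of lines in \(\mathbb P^2\) through the image of a real point of \(\Deltatilde\), so that the resulting conic bundle surface over the generic line has a split singular fibre, and then blows down a component to reduce to the \(\deg\Delta\leq 3\) case where Iskovskikh's surface results apply. Your section-of-\(\pi_1\) argument is correct for the models \(Y_{\Deltatilde/\Delta}\) and essentially reproves Proposition~\ref{prop:Yproperties}\eqref{item:Deltatilde-point-section}, but that is strictly weaker than what~\eqref{item:Deltatildepoint_rational} asserts.
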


The irrational example in Theorem~\ref{thm:main}\eqref{item:1_oval} and the disconnected example in \eqref{item:2_nonnested} were constructed in \cite[Theorem 1.3]{FJSVV}; our contribution in these two cases is the construction of rational examples and an irrational connected example in \eqref{item:2_nonnested}. Theorem~\ref{thm:main}\eqref{item:Deltatildepoint_rational} is \cite[Proposition 6.1]{FJSVV}. 
All members with real points are unirational by \cite[Proposition 6.1]{FJSVV}, and the disconnected ones are not stably rational.

Conic bundles \(X\to\mathbb P^2\) with quartic discriminant curve were previously studied by the first author, together with S. Frei, S. Sankar, B. Viray, and I. Vogt in \cite{FJSVV}. When \(\deg\Delta = 4\), then \(X\) is \(\mathbb C\)-rational, and many obstructions to rationality vanish over \(\mathbb R\) \cite[Section 1.1]{FJSVV}; however, they constructed examples of irrational such \(X\) where the geometric rationality construction does not descend to \(\mathbb R\) (irrational examples also appeared implicitly in earlier work of Hassett--Tschinkel on real complete intersections of quadrics \cite[Remark 13 and Section 11.6]{ht-intersection-quadrics}). To study rationality in the degree \(4\) case, Frei--Ji--Sankar--Viray--Vogt introduced a particular model of these conic bundles that admits the structure of a double cover of \(\mathbb P^1\times\mathbb P^2\) branched over a divisor of bidegree \((2,2)\):
\begin{equation}\label{eqn:DoubleCoverEquation}
z^2=t_0^2 Q_1(u,v,w) + 2t_0t_1 Q_2(u,v,w) + t_1^2Q_3(u,v,w)\end{equation}
where \(Q_i\in\mathbb R[u,v,w]\) are quadratic forms. These double cover models admit the additional structure of a quadric surface bundle via the first projection. Using work of Bruin \cite{bruin} on \'etale double covers \(\Deltatilde\to\Delta\) of smooth plane quartics, \cite{FJSVV} showed that any such \(\Deltatilde\to\Delta\) can be realized as the discriminant cover of a conic bundle defined by an equation of the form \eqref{eqn:DoubleCoverEquation}. The Artin--Mumford sequence implies that up to a constant Brauer class, the discriminant double cover determines the birational isomorphism class of a conic bundle \cite[Section 6.9.6]{Poonen-Q-points}; thus, up to a class in \(\Br\mathbb R \cong\mathbb Z/2\), any conic bundle over \(\mathbb P^2\) with smooth quartic discriminant curve is birational over \(\mathbb P^2\) to such a double cover of \(\mathbb P^1\times\mathbb P^2\).

\cite{FJSVV} use the model~\eqref{eqn:DoubleCoverEquation} to construct examples of irrational conic bundles \(Y\) whose discriminant curves \(\Delta\) have real isotopy class one oval or two non-nested ovals. In each of these cases, irrationality is witnessed by a different obstruction. In the one oval example \cite[Theorem 1.3(2)]{FJSVV} \(Y(\mathbb R)\neq\emptyset\) is connected, and irrationality is shown using the \textsf{intermediate Jacobian torsor (IJT) obstruction}. This obstruction to rationality is a refinement over non-closed fields of the intermediate Jacobian obstruction of Clemens--Griffiths \cite{Clemens-Griffiths-ij}, and was recently introduced by Hassett--Tschinkel \cite{HT-cycle, ht-intersection-quadrics} and Benoist--Wittenberg \cite{bw-ij} (see Section~\ref{sec:IJT-background}).
However, the two non-nested ovals example \cite[Theorem 1.3(1)]{FJSVV} has no IJT obstruction to rationality but \(Y(\mathbb R)\) is disconnected; hence \(Y\) is irrational.
Thus, \cite{FJSVV} show that in general, neither the IJT obstruction nor the topological obstruction to rationality alone is sufficient to characterize rationality for conic bundle threefolds of the form \eqref{eqn:DoubleCoverEquation}.

For four of the six isotopy classes of the discriminant curve \(\Delta\), we prove the following characterizations of rationality for the double covers \eqref{eqn:DoubleCoverEquation}:

\begin{thm}\label{thm:DoubleCoverRationalityCriteria}
Over \(\mathbb R\), let \(Y\to\mathbb P^1\times\mathbb P^2\) be a double cover branched over a bidegree \((2,2)\) divisor, and assume that the discriminant cover \(\Deltatilde\to\Delta\) of the conic bundle obtained from the second projection is an \'etale double cover of a smooth quartic.
\begin{enumerate}
\item\label{item:0ovals-criterion}
If \(\Delta(\mathbb R)=\emptyset\), then \(Y\) is rational if and only if \(Y(\mathbb R)\neq\emptyset\) and the IJT obstruction vanishes.
\item\label{item:2nonnested-criterion}
If \(\Delta\) is two non-nested ovals, then \(Y\) is rational if and only if \(Y(\mathbb R)\) is connected and the IJT obstruction vanishes. Neither condition alone is sufficient to guarantee rationality.
\item\label{item:3ovals-rationality-criterion}
If \(\Delta\) is three ovals, then \(Y\) is rational if and only if \(Y(\mathbb R)\) is connected.
\item\label{item:4ovals-rational}
If \(\Delta\) is four ovals, then \(Y\) is rational.
\end{enumerate}
In each of these cases, \(Y\) is rational if and only if the quadric surface bundle \(Y\to\mathbb P^1\) admits a section.
\end{thm}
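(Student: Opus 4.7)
The plan is to establish all four rationality criteria through a uniform reduction to the section-existence statement for the quadric surface bundle \(Y\to\mathbb P^1\) coming from the first projection, and then to handle the four isotopy classes of \(\Delta\) case-by-case using the explicit description of the intermediate Jacobian torsor from \cite{FJSVV}. Once the equivalence ``\(Y\) is rational \(\iff\) \(Y\to\mathbb P^1\) admits a section'' is in hand, parts \eqref{item:0ovals-criterion}--\eqref{item:4ovals-rational} reduce to determining when, under the stated topological and IJT hypotheses, a section of \(Y\to\mathbb P^1\) exists.

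For the final equivalence, one direction is Witt's theorem: a section of \(Y\to\mathbb P^1\) produces an \(\mathbb R(t)\)-point on the smooth quadric surface generic fiber, which is then \(\mathbb R(t)\)-rational, hence \(Y\) is \(\mathbb R\)-rational.  The reverse implication is the substantive content; I would prove it by combining the standard necessary conditions for rationality (nonempty connected real locus together with vanishing of the IJT obstruction of Hassett--Tschinkel \cite{HT-cycle} and Benoist--Wittenberg \cite{bw-ij}) with the explicit Prym-variety description of the IJT in \cite{FJSVV}, which shows that triviality of the torsor is detected precisely by the obstruction to lifting a rational multisection of \(Y\to\mathbb P^1\) to a regular section.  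This provides a single framework in which each of the four characterizations can be phrased.

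For each case I would then analyze the real topology of \(Y(\mathbb R)\) relative to the real components of \(\Delta\) and \(\Deltatilde\), together with the real structure of the Prym of \(\Deltatilde\to\Delta\), to read off which combinations of hypotheses force a section.  In part \eqref{item:0ovals-criterion}, connectedness of \(Y(\mathbb R)\) is automatic from \(\Delta(\mathbb R)=\emptyset\), so only \(Y(\mathbb R)\neq\emptyset\) plus trivial IJT are needed.  Part \eqref{item:2nonnested-criterion} genuinely requires both conditions: the examples of \cite{FJSVV} already show that neither alone suffices, and the sufficiency direction follows from the Prym analysis once connectedness is added.  For part \eqref{item:3ovals-rationality-criterion}, I anticipate a parity argument on the real Prym which shows that connectedness of \(Y(\mathbb R)\) already implies triviality of the IJT torsor, so no separate IJT hypothesis is required.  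For part \eqref{item:4ovals-rational}, I would show that when \(\Delta\) has four ovals both hypotheses hold unconditionally, yielding rationality for every such \(Y\).

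The main obstacle is the translation from the cohomological/topological conditions into the concrete existence of a section of \(Y\to\mathbb P^1\); this requires realizing sections as Galois-invariant divisor classes whose image in the Prym is controlled by the IJT, and then verifying in each case that the hypothesis lifts back to an honest section.  I expect part \eqref{item:3ovals-rationality-criterion} to be the hardest: turning a purely topological condition on \(Y(\mathbb R)\) into vanishing of a Galois-cohomological invariant of the Prym demands a careful identification of topological and algebraic invariants, and part \eqref{item:4ovals-rational} similarly needs a uniform global argument over the entire isotopy class rather than a pointwise construction.
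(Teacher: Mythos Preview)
Your proposal has a genuine gap at its core. You assert that the IJT description in \cite{FJSVV} shows triviality of the torsor is ``detected precisely by the obstruction to lifting a rational multisection of \(Y\to\mathbb P^1\) to a regular section,'' but this is false, and its failure is exactly the subtle point here. What \cite[Proposition~4.5]{FJSVV} gives is that a real point on \(\Ptildem{1}\) yields a \emph{Galois-invariant geometric} section of \(\pi_1\); whether it descends to a real section is governed by a class in \(\Br\mathbb R\cong\mathbb Z/2\), and in general it does not. Indeed Example~\ref{exmp:irr3oval} (see Proposition~\ref{prop:IJTvanishirrational}) is a three-oval case with trivial IJT torsor and disconnected \(Y(\mathbb R)\), hence no section; so your proposed chain ``connected \(\Rightarrow\) IJT trivial \(\Rightarrow\) section'' for part~\eqref{item:3ovals-rationality-criterion} breaks at the second arrow regardless of whether the first holds. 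The same issue undermines your sufficiency arguments in all four parts.

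The ingredient you are missing is the degree two del Pezzo surface \(W\), the branch locus of \(Y\to\mathbb P^1\times\mathbb P^2\). The paper sorts the \(56\) geometric lines of \(W\) by their behavior under \(\pi_1'\colon W\to\mathbb P^1\) and shows that \(32\) of them give sections of \(\pi_1\) (Proposition~\ref{prop:W-lines}). By Comessatti, the real bitangents of \(\Delta\) split in \(W\) exactly when \(U_\Delta\subset(Q_1Q_3-Q_2^2<0)_{\mathbb R}\) (Lemma~\ref{lem:outside-bitangent-split}). Under this sign condition Proposition~\ref{prop:negative-outside-IJT} upgrades a point on \(\Ptildem{1}\) to an honest real section, by applying Witt's theorem over the line \(\ell\subset\mathbb P^2\) supplied by \cite{FJSVV} (the sign condition is what makes \(Y_\ell\to\ell\) surjective on real points) and then Springer's theorem. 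Parts~\eqref{item:0ovals-criterion} and~\eqref{item:2nonnested-criterion} are proved by showing the stated hypotheses force either this sign condition or \(\Deltatilde(\mathbb R)\neq\emptyset\). Parts~\eqref{item:3ovals-rationality-criterion} and~\eqref{item:4ovals-rational} bypass the Prym entirely: for four ovals the sign condition always holds (Lemma~\ref{lem:4ovals-outside}), and counting real lines on \(W\) produces a section directly; for three ovals with \(Y(\mathbb R)\) connected, a dichotomy argument shows that either the sign condition holds or \(\Deltatilde(\mathbb R)\neq\emptyset\), and either alternative yields a real section of \(\pi_1\).
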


We do not know if the IJT obstruction is sufficient to characterize rationality in the one oval and two nested ovals cases. For two nested ovals case, the vanishing of the IJT obstruction implies \(Y(\mathbb R)\) is connected (Corollary~\ref{cor:two-nested-IJT}). In the case of one oval, the topological obstruction vanishes (Corollary~\ref{cor:components-of-Y-and-Delta}), and Frei--Ji--Sankar--Viray--Vogt have constructed an example where the IJT obstruction vanishes but rationality is unknown \cite[Example 1.5]{FJSVV}, see also Remark~\ref{rem:FJSVVexmp1.5}. Necessity of both conditions in part~\eqref{item:2nonnested-criterion} is in Remark~\ref{rem:2nonnested-criterion}.

As shown in \cite[Theorem 1.2]{FJSVV}, an underlying reason for the failure of the IJT obstruction to characterize rationality in this setting comes from the nontriviality of \(\Br\mathbb R\).
We study a certain quadratic twist to show that under certain assumptions, the IJT obstruction does characterize rationality. Specifically, the bitangents of a plane quartic \(\Delta=(f=0)\) are intimately related to the lines on the associated degree two del Pezzo surfaces \((t^2=f)\) and \((t^2=-f)\).
Work of Comessatti \cite{ComessattiFondamentiPL} shows that all the real bitangents split in one of these del Pezzo surfaces, and none of them split in the other, and this splitting is determined by the sign of the defining equation \(f\). For \(f=Q_1Q_3-Q_2^2\) the del Pezzo surface \((t^2=-f)\) is the branch locus of \eqref{eqn:DoubleCoverEquation}, and the image of its real points in \(\mathbb P^2\) is the locus where \(f \leq 0\). We use this to show that if \(Q_1Q_3-Q_2^2<0\) outside \(\Delta\) (which is precisely when the real bitangents split), then the IJT obstruction characterizes rationality (Proposition~\ref{prop:negative-outside-IJT}).
We apply this result to construct irrational examples with no topological obstruction to rationality.

In contrast, when \(Q_1Q_3-Q_2^2<0\) inside \(\Delta\), the IJT obstruction does not characterize rationality, as shown by the two non-nested ovals example of \cite[Theorem 1.3(1)]{FJSVV}. On our way to proving Theorem~\ref{thm:main}, several of the examples that we construct show that this failure persists for other real isotopy classes of \(\Delta\). Namely, we construct a \(\Delta(\mathbb R)=\emptyset\) example and a three ovals example where the IJT obstruction vanishes but the real locus of \(Y\) exhibits an obstruction to (stable) rationality (Proposition~\ref{prop:IJTvanishirrational}).
These three isotopy classes of \(\Delta\) are the only ones for which this is possible (Corollaries~\ref{cor:components-of-Y-and-Delta} and \ref{cor:two-nested-IJT}).

We also construct a rational example in each isotopy class of \(\Delta\). Since a real point on \(\Deltatilde\) gives a rationality construction for \(Y\) \cite[Proposition 4.1(5)]{FJSVV}, we primarily focus on the case when \(\Deltatilde(\mathbb R)=\emptyset\). In particular, in the two nested ovals case we give an example where \(\pi_1\) is surjective on real points and hence has a section by a result of Witt \cite{Witt-quadratic-forms}, but this section does not come from any known construction (Example~\ref{exmp:rational2nestedovals}\eqref{item:2nested-Deltatildeempty}).
We are not able to construct a similar example when \(\Delta\) is one oval, and we pose the following two questions:
\begin{question}[See Remark~\ref{rem:1oval-rational-question}]
Does there exist a rational threefold \(Y\) defined as in \eqref{eqn:DoubleCoverEquation} such that the real isotopy class of \(\Delta\) is one oval, \(Q_1Q_3-Q_2^2<0\) inside the oval, and \(\Deltatilde(\mathbb R)=\emptyset\)?
\end{question}

\begin{question}[See Corollary~\ref{cor:two-nested-IJT}]
If \(Y\) is as in \eqref{eqn:DoubleCoverEquation} and \(\Delta(\mathbb R)\) is two nested ovals, then rationality of \(Y\) \(\implies\) the IJT obstruction vanishes \(\implies\) \(Y(\mathbb R)\) is connected. Do any of the reverse implications hold?
\end{question}

\subsection{Outline}
In Section~\ref{sec:Preliminaries}, we review background and context for conic bundles over \(\mathbb P^2\) and rationality, key features of the double cover construction of \cite{FJSVV}, and the intermediate Jacobian torsor obstruction. We also make some observations relating the real topology of the double cover to that of the quartic curve. We prove Theorem~\ref{thm:DoubleCoverRationalityCriteria} in Section~\ref{sec:dP2}: we first study the associated degree \(2\) del Pezzo surface, and we then show the sufficiency of the intermediate Jacobian torsor obstruction in the case when \(Q_1Q_3-Q_2^2<0\) outside the ovals. In Section~\ref{sec:ExamplesConstruction} we apply our earlier results to give explicit examples of irrational and rational examples and prove Theorem~\ref{thm:main}.

\subsection*{Acknowledgements}
This research was conducted during the 2022 Research Experience for Undergraduates program at the University of Michigan Department of Mathematics, mentored by the first author. We are grateful to David Speyer and the University of Michigan math department for organizing the REU program and making this project possible. We thank Bianca Viray and Isabel Vogt for helpful discussions and comments, especially regarding Section~\ref{sec:dP2}; Sarah Frei, Brendan Hassett, and Soumya Sankar for helpful comments and conversations; and J\'anos Koll\'ar for the question that motivated this project.
The first author received support from NSF grant DMS-1840234, and the second author was supported by Karen Smith's NSF grant DMS-2101075.

\section{Preliminaries}\label{sec:Preliminaries}

We will first recall relevant definitions and background on rationality of conic bundle threefolds in Section~\ref{sec:ConicBundlesBg}. In Section~\ref{sec:DoubleCover}, we describe the double cover models \(Y\to\mathbb P^1\times\mathbb P^2\) of quartic conic bundles introduced in \cite{FJSVV}, which will be the models that we use throughout this paper. Next, in Section~\ref{sec:IJT-background} we review the intermediate Jacobian torsor obstruction and results from \cite{FJSVV} on the intermediate Jacobian torsors for the double cover models. In Section~\ref{sec:Yconnectedcomponents} we recall some facts about the real topology of even degree plane curves, and we make some observations relating \(Y(\mathbb R)\) and \(\Delta(\mathbb R)\).

\subsection{Rationality of standard conic bundles over \(\mathbb P^2\)}\label{sec:ConicBundlesBg}

We first review some preliminary notions about conic bundle threefolds and rationality. For more details on conic bundle threefolds, see \cite[Section 3]{prokhorov-rationality-conic-bundles}.

Let \(k\) be a field of characteristic \(\neq 2\).
A conic bundle over \(\mathbb P^2\) is a proper flat \(k\)-morphism \(\pi\colon X\to\mathbb P^2\) whose generic fiber is a smooth conic over \(\kk(\mathbb P^2)\). The \textsf{discriminant cover} \(\piDelta\colon \Deltatilde\to\Delta\) parametrizes the components of the singular fibers of \(\pi\). A conic bundle is \textsf{geometrically standard} if \(X\) is smooth and \(\rho(X_{\kbar}/\mathbb P^2_{\kbar})=1\).
The models we work with will have the property that \(\Delta\) is smooth and \(\pi\colon X\to\mathbb P^2\) is geometrically standard, and \(\piDelta\) is an \'etale double cover. In particular \(\pi\) has reduced fibers. We will introduce these models in Section~\ref{sec:DoubleCover}.

Let \(W\) be a smooth projective variety of dimension \(n\) over \(k\). Recall that \(W\) is said to be \textsf{rational over \(k\)} (or \textsf{\(k\)-rational}) if there is a birational map \(W\dashrightarrow\mathbb P^n\) defined over \(k\), \textsf{stably rational over \(k\)} if \(W\times\mathbb P^m\) is \(k\)-rational for some \(m\), and \textsf{unirational over \(k\)} if there is a dominant rational map \(\mathbb P^n\dashrightarrow W\) defined over \(k\).
If \(k\subset k'\) is a field extension, then \(k\)-(stable/uni-)rationality implies \(k'\)-(stable/uni-)rationality, but the converse need not hold, as demonstrated by a pointless real conic. We say that \(W\) is \textsf{geometrically rational} if the base change \(W_{\kbar}\) to the algebraic closure of \(k\) is \(\kbar\)-rational.

For the majority of this article, we work over \(\mathbb R\). As mentioned in the introduction, when we say that a variety is rational without specifying the ground field, we mean \(\mathbb R\)-rationality, \emph{not} \(\mathbb C\)-rationality.

In order to show that a variety is not rational, one must show that it has an obstruction to rationality. One obstruction is given by the Lang--Nishimura lemma, which implies that if \(W\) is \(k\)-rational (or even \(k\)-unirational), then it must contain a \(k\)-point. Over the real numbers, the locus \(W(\mathbb R)\) of real points also provides an obstruction to rationality: the number of real connected components is a birational invariant of smooth projective real varieties \cite[Theorem 3.4.12]{BCR-realAG}. So if \(W(\mathbb R)\) is disconnected, then \(W\) has an obstruction to stable rationality over \(\mathbb R\) (see also \cite{CTParimala} for an interpretation using unramified cohomology).

Over the complex numbers, rationality of conic bundles over \(\mathbb P^2\) is well understood. Namely, let \(X\to\mathbb P^2\) be a geometrically standard conic bundle with smooth discriminant curve \(\Delta\). Then \(X\) is \(\mathbb C\)-rational if and only if \(\deg\Delta\leq 4\), or \(\deg\Delta=5\) and \(\Deltatilde\to\Delta\) is defined by an even theta characteristic. Rationality is proven in the \(\deg\Delta\leq 4\) case using rationality results of Iskovskikh on conic bundle surfaces with low degree discriminant, by applying his surface classification to the generic fiber of a pencil of rational curves in \(\mathbb P^2\) \cite[Theorem 1]{Iskovskikh-rationality-cbs}. In the degree \(4\) case, one needs to blow down a divisor coming from a singular fiber of \(\pi\) to reduce to the degree \(3\) conic bundle surface case.
The higher degree results are due to the combined work of Tyurin, Masiewicki, Panin (\(\deg\Delta=5\)), and Beauville (\(\deg\Delta\geq 6\)); in the \(\mathbb C\)-irrational cases, \(X\) has an intermediate Jacobian obstruction to \(\mathbb C\)-rationality.
In addition, if \(\deg\Delta\leq 8\), then \(X\) is \(\mathbb C\)-unirational. We refer the reader to \cite[Theorem 9.1 and Corollary 14.3.4]{prokhorov-rationality-conic-bundles} for an overview of these results.

Over the real numbers, we recall \cite[Proposition 6.1]{FJSVV}, which in particular contains Theorem~\ref{thm:main}\eqref{item:Deltatildepoint_rational}.
If \(\deg\Delta\leq 3\), then \(X\) is rational if and only if \(X(\mathbb R)\neq\emptyset\) (e.g. this holds if \(\Delta(\mathbb R)\neq\emptyset\)): the Lang--Nishimura lemma shows necessity of an \(\mathbb R\)-point, and if \(X\) admits an \(\mathbb R\)-point then a modification of the proof over \(\mathbb C\) shows that \(X\) is rational. In degree \(4\), the proof of geometric rationality does not always descend, even if \(X(\mathbb R)\neq\emptyset\), because the singular fibers of \(\pi\) need not be split over \(\mathbb R\). When \(\Deltatilde(\mathbb R)\neq\emptyset\), however, the argument over \(\mathbb C\) goes through if the pencil is chosen through the image of a point of \(\Deltatilde(\mathbb R)\). Similarly, if \(X\) has an \(\mathbb R\)-point away from \(X_\Delta := X\times_{\mathbb P^2} \Delta\), then \(X\) is unirational by a modification of the argument over \(\mathbb C\). (These results hold more generally over any field of characteristic \(\neq 2\), see \cite[Section 6.1]{FJSVV}.)

\subsection{Conic bundle threefolds realized as double covers of \(\mathbb P^1\times\mathbb P^2\)}\label{sec:DoubleCover}
We recall the following models of conic bundles, which were studied by Frei--Ji--Sankar--Viray--Vogt \cite{FJSVV}.
These are the models that we will study throughout the paper, and we will work over the real numbers. First, we recall a result of Bruin that allows us express \'etale double covers of smooth plane quartics in a particular form.
\begin{thm}[{\cite[Section 3]{bruin}}]\label{thm:bruin}
Let \(\piDelta\colon\Deltatilde\to\Delta\) be an \'etale double cover of a smooth plane quartic. Then there exist quadratic forms \(Q_1,Q_2,Q_3\in \mathbb R[u,v,w]\) such that \(\Deltatilde\to\Delta\) is of the form
\begin{equation}\begin{gathered}\label{eqn:DeltatildeDelta}\Delta=(Q_1Q_3-Q_2^2=0), \\ \Deltatilde=(Q_1-r^2=Q_2-rs=Q_3-s^2=0).\end{gathered}\end{equation}
\end{thm}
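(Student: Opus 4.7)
The plan is to translate the \'etale double cover into linear-algebra data on \(\Delta\) and then read off explicit equations for both \(\Delta\) and \(\Deltatilde\). The starting observation is that \(\piDelta\colon\Deltatilde\to\Delta\) is classified by a non-trivial 2-torsion line bundle \(\mathcal{L}\) on \(\Delta\); concretely one has \(\Deltatilde=\underline{\Spec}_{\mathcal{O}_\Delta}(\mathcal{O}_\Delta\oplus\mathcal{L})\), with algebra structure induced by a fixed trivialization \(\mathcal{L}^{\otimes 2}\xrightarrow{\sim}\mathcal{O}_\Delta\). Since \(\Delta\) is a smooth plane quartic, \(\omega_\Delta\cong\mathcal{O}_\Delta(1)\) and \(g(\Delta)=3\). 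Riemann--Roch gives \(\chi(\mathcal{L}(1))=2\), and Serre duality combined with non-triviality of the degree-\(0\) line bundle \(\mathcal{L}\) gives \(h^1(\mathcal{L}(1))=h^0(\mathcal{L})=0\). Hence \(h^0(\Delta,\mathcal{L}(1))=2\), and I would choose a basis \(r,s\) of this space.

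The pairwise products \(r^2,rs,s^2\) lie in \(H^0(\mathcal{L}^{\otimes 2}(2))=H^0(\mathcal{O}_\Delta(2))\). The restriction map \(H^0(\mathbb{P}^2,\mathcal{O}(2))\to H^0(\Delta,\mathcal{O}(2))\) is an isomorphism of \(6\)-dimensional spaces (its kernel consists of quadrics divisible by the degree-\(4\) defining form \(f\) of \(\Delta\), which is zero), so I lift \(r^2,rs,s^2\) uniquely to quadratic forms \(Q_1,Q_2,Q_3\in\mathbb{R}[u,v,w]\). The identity \((r^2)(s^2)=(rs)^2\) on \(\Delta\) then forces \(Q_1Q_3-Q_2^2=\lambda f\) for some \(\lambda\in\mathbb{R}\); the case \(\lambda=0\) is ruled out because over \(\mathbb{C}\) it would force \(Q_1,Q_2,Q_3\) to be proportional to \(L^2,LM,M^2\) for linear forms \(L,M\), which would in turn force \(\mathcal{L}\cong\mathcal{O}_\Delta\). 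Absorbing \(\lambda\) into \(f\) gives \(\Delta=(Q_1Q_3-Q_2^2=0)\).

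For the description of \(\Deltatilde\), observe that \(\piDelta^*\mathcal{L}\) acquires a canonical trivialization on \(\Deltatilde\) (this is the defining property of the relative-spectrum construction), so \(r,s\) pull back to sections of \(\piDelta^*\mathcal{O}_\Delta(1)\) on equal footing with the pulled-back coordinates \(u,v,w\). Together they yield a morphism \(\Deltatilde\to\mathbb{P}^4\), \([u:v:w:r:s]\), whose image lies in the subscheme \(Z\subset\mathbb{P}^4\) cut out by \(Q_1-r^2\), \(Q_2-rs\), \(Q_3-s^2\). This morphism separates the two points of \(\Deltatilde\) over each \(p\in\Delta\) because the deck involution acts by \(-1\) on \(\mathcal{L}\), sending \([u:v:w:r:s]\mapsto[u:v:w:-r:-s]\), which is a distinct \(\mathbb{P}^4\)-point. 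The identification \(\Deltatilde\xrightarrow{\sim} Z\) then follows from a degree count: \(Z\) is a complete intersection of three quadrics in \(\mathbb{P}^4\), hence of degree \(2^3=8\), matching \(\deg_{\piDelta^*\mathcal{O}(1)}\Deltatilde=2\cdot 4=8\).

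The main obstacle I foresee is the final identification step: to pass from ``the image of \(\Deltatilde\) lies in \(Z\) and both have the same degree'' to ``\(\Deltatilde\xrightarrow{\sim}Z\)'', one must check that \(Z\) is reduced, irreducible, and smooth, with no embedded or degenerate components. This reduces to a Jacobian computation at each closed point: the three defining forms have linearly independent differentials, which follows by exploiting that \(\piDelta\) is \'etale together with the smoothness of \(\Delta\). A related subtlety is the base-point freeness of \(|\mathcal{L}(1)|\), needed for the morphism \(\Deltatilde\to\mathbb{P}^4\) to be defined everywhere; a common zero of \(r\) and \(s\) would produce a simultaneous zero of \(Q_1,Q_2,Q_3\) on \(\Delta\), contradicting smoothness of \(\Delta\).
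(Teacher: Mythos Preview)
The paper does not prove this statement; it is quoted as a result of Bruin and no argument is given beyond the citation \cite[Section 3]{bruin}. So there is no in-paper proof to compare against.

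That said, your outline is essentially Bruin's own argument: identify the cover with a nontrivial \(2\)-torsion class \(\mathcal L\in\Pic^0(\Delta)[2]\), use Riemann--Roch to get \(h^0(\Delta,\mathcal L(1))=2\), lift the products \(r^2,rs,s^2\) through the isomorphism \(H^0(\mathbb P^2,\mathcal O(2))\xrightarrow{\sim}H^0(\Delta,\mathcal O_\Delta(2))\), and recover \(\Delta\) and \(\Deltatilde\) from the resulting quadrics. The points you flag as obstacles (ruling out \(\lambda=0\), base-point freeness of \(|\mathcal L(1)|\), and smoothness of the complete intersection \(Z\)) are exactly the details Bruin handles; your sketches for each are on the right track, though the claim that \(Q_1Q_3=Q_2^2\) forces \(Q_1,Q_2,Q_3\) to be of the shape \(L^2,LM,M^2\) is not literally correct---a cleaner way is to note that \(Q_1Q_3-Q_2^2\equiv 0\) would make the sections \(r,s\) linearly dependent in \(H^0(\mathcal L(1))\) (since the \(2\times2\) symmetric matrix \(\bigl(\begin{smallmatrix}r^2&rs\\rs&s^2\end{smallmatrix}\bigr)\) would have rank \(\le 1\) pointwise), contradicting the choice of basis.
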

Define the double cover \(\piDoubleCover\colon Y\to\mathbb P^1_{[t_0:t_1]}\times\mathbb P^2_{[u:v:w]}\): \begin{equation}\label{eqn:DoubleCover}z^2=t_0^2 Q_1+2t_0t_1 Q_2+t_1^2Q_3.\end{equation}
The second projection \(\pi_2\colon Y\to\mathbb P^2\) is a conic bundle whose discriminant double cover is defined by \eqref{eqn:DeltatildeDelta}.
The isomorphism class of \(Y\) only depends on the double cover \(\Deltatilde\to\Delta\), not on the choice of the quadrics \(Q_i\) (see \cite[Section 4]{FJSVV}), and so we will denote the double cover given above by \(Y_{\Deltatilde/\Delta}\), or by \(Y\) when the context is clear. We review the following properties of \(Y\).

\begin{prop}[{\cite[Theorem 2.6, Propositions 4.1 and 4.3]{FJSVV}}]\label{prop:Yproperties}
If \(Y\) is defined as in Equation~\eqref{eqn:DoubleCover}, then:
\begin{enumerate}
    \item\label{item:Yconicbundle} \(Y\) is a smooth Fano threefold, and the second projection \(\pi_2\colon Y\to\mathbb P^2\) is a geometrically standard conic bundle with discriminant cover \(\piDelta\colon\Deltatilde\to\Delta\). In particular, \(Y\) is \(\mathbb C\)-rational, and if a smooth fiber of \(\pi_2\) contains a real point then \(Y\) is unirational.
    \item\label{item:Yquadricsurfacebundle} The first projection \(\pi_1\colon Y\to\mathbb P^1\) is a quadric surface bundle. In particular, if \(\pi_1\) has a (real) section, then \(Y\) is rational.
    \item\label{item:Deltatilde-point-section} If \(\Deltatilde(\mathbb R)\neq\emptyset\), then \(\pi_1\) has a (real) section.
    \item\label{item:Gamma-curve} The Stein factorization of the relative variety of lines is \(\mathcal F_1(Y/\mathbb P^1)\to\Gamma\to\mathbb P^1\), where \(M_i\) is the symmetrix \(3\times 3\) matrix corresponding to \(Q_i\) and \(\Gamma\) is the genus \(2\) curve defined by \[y^2=-\det(t^2 M_1+2tM_2+M_3).\]
    \item\label{item:alluptoconstantBrclass} \cite[Section 6.9.6]{Poonen-Q-points} If \(X\to\mathbb P^2\) is a geometrically standard conic bundle with discriminant cover \(\Deltatilde\to\Delta\), then \([(Y_{\Deltatilde/\Delta})_\eta]-[X_\eta]\in\mathrm{Im}(\Br\mathbb R\to\Br\kk(\mathbb P^2))\).
\end{enumerate}
\end{prop}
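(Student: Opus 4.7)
The plan is to verify each item directly from the defining equation~\eqref{eqn:DoubleCover}, using standard techniques for double covers and conic/quadric bundles. Write $B = \{t_0^2 Q_1 + 2t_0 t_1 Q_2 + t_1^2 Q_3 = 0\}\subset\mathbb{P}^1\times\mathbb{P}^2$ for the bidegree $(2,2)$ branch divisor of $\tilde\pi$. For part~\eqref{item:Yconicbundle}, smoothness of $Y$ reduces to smoothness of $B$: the discriminant of $B$ as a binary quadratic in $[t_0:t_1]$ is $-4(Q_1Q_3-Q_2^2)$, so away from $\Delta$ the projection $B\to\mathbb{P}^2$ is étale, while along $\Delta$ the étaleness of $\Deltatilde\to\Delta$ provides the transversality needed for $B$ to remain smooth. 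The Fano property then follows from the double cover adjunction formula $-K_Y = \tilde\pi^*(-K_{\mathbb{P}^1\times\mathbb{P}^2} - \tfrac{1}{2} B) = \tilde\pi^*\mathcal{O}(1,2)$, which is ample. The conic bundle structure of $\pi_2$ is visible from the fiber description: the fiber over $[u:v:w]$ is a conic in $\mathbb{P}^2_{[t_0:t_1:z]}$ that degenerates exactly when $Q_1Q_3 - Q_2^2 = 0$, and the two components of a split singular fiber encode the square root data $r^2 = Q_1,\ rs = Q_2,\ s^2 = Q_3$ of~\eqref{eqn:DeltatildeDelta}, identifying the discriminant cover. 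Geometric standardness amounts to a Picard rank computation on $Y_{\mathbb{C}}$; $\mathbb{C}$-rationality is Iskovskikh's theorem for $\deg\Delta = 4$; and the unirationality claim is the projection argument recalled at the end of Section~\ref{sec:ConicBundlesBg}.

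For parts~\eqref{item:Yquadricsurfacebundle} and~\eqref{item:Deltatilde-point-section}, the fiber of $\pi_1$ over $[t_0:t_1]$ is the quadric surface $\{z^2 = t_0^2 Q_1 + 2 t_0 t_1 Q_2 + t_1^2 Q_3\}\subset\mathbb{P}^3_{[u:v:w:z]}$, giving the quadric bundle structure. A real section $\sigma\colon\mathbb{P}^1\to Y$ yields a birational map $Y\dashrightarrow\mathbb{P}^1\times\mathbb{P}^2$ over $\mathbb{P}^1$ by fiberwise stereographic projection from $\sigma([t_0:t_1])$, using that smooth quadric surfaces with a rational point are rational. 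Given a real point $(u_0,v_0,w_0;\,r_0,s_0)$ of $\Deltatilde$, the defining relations force the factorization $(t_0^2 Q_1 + 2 t_0 t_1 Q_2 + t_1^2 Q_3)|_{(u_0,v_0,w_0)} = (t_0 r_0 + t_1 s_0)^2$, so $\sigma([t_0:t_1]) := ([t_0:t_1],\,[u_0:v_0:w_0];\,t_0 r_0 + t_1 s_0)$ is the desired real section.

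For part~\eqref{item:Gamma-curve}, I would express the fiber of $\pi_1$ over $[t_0:t_1]$ as a quadric in $\mathbb{P}^3_{[u:v:w:z]}$ with symmetric $4\times 4$ matrix $\mathrm{diag}(t_0^2 M_1 + 2t_0 t_1 M_2 + t_1^2 M_3,\,-1)$, whose determinant is $-\det(t_0^2 M_1 + 2t_0 t_1 M_2 + t_1^2 M_3)$. Off the vanishing locus of this determinant each smooth quadric surface carries two rulings, and globally these assemble into an étale double cover of $\mathbb{P}^1$ branched over the degeneracy locus; its normalization is the genus~$2$ hyperelliptic curve $\Gamma\colon y^2 = -\det(t^2 M_1 + 2tM_2 + M_3)$. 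Thus $\mathcal{F}_1(Y/\mathbb{P}^1)\to\mathbb{P}^1$ factors through $\Gamma$ with connected $\mathbb{P}^1$-fibers parametrizing the lines in each ruling, giving the Stein factorization. Part~\eqref{item:alluptoconstantBrclass} is a direct application of the Artin--Mumford exact sequence for the function field $\kk(\mathbb{P}^2)$. The main technical obstacle is~\eqref{item:Gamma-curve}: matching the rulings to the hyperelliptic double cover requires careful sign conventions, and verifying that $\Gamma$ is irreducible (hence smooth of genus~$2$) requires that $\det(t^2 M_1 + 2t M_2 + M_3)$ has no repeated factor, which once again uses the étaleness of $\Deltatilde\to\Delta$.
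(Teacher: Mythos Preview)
The paper does not supply a proof of this proposition: it is recorded as a quotation of \cite[Theorem~2.6, Propositions~4.1 and~4.3]{FJSVV} (together with \cite[Section~6.9.6]{Poonen-Q-points} for part~\eqref{item:alluptoconstantBrclass}), and no argument appears in the text. So there is nothing in this paper to compare your proposal against.

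That said, your sketch is a faithful reconstruction of the standard arguments, and is essentially what the cited references do. A few small remarks. For~\eqref{item:Yconicbundle}, your adjunction computation $-K_Y=\tilde\pi^*\mathcal O(1,2)$ is correct; the geometric standardness is indeed a Picard rank check over $\mathbb C$, which in \cite{FJSVV} is handled as part of their general conic bundle setup rather than by an explicit Lefschetz-type argument. For~\eqref{item:Deltatilde-point-section}, your explicit section $\sigma([t_0:t_1])=([t_0:t_1],[u_0:v_0:w_0];t_0r_0+t_1s_0)$ is exactly the construction used there. For~\eqref{item:Gamma-curve}, the word ``normalization'' is unnecessary: once one knows the sextic $-\det(t^2M_1+2tM_2+M_3)$ is separable, the double cover parametrizing rulings is already the smooth genus~$2$ curve~$\Gamma$. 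The separability, as you note, comes from smoothness of $Y$ (equivalently of the branch divisor), which in turn is what the \'etaleness hypothesis on $\Deltatilde\to\Delta$ buys; this is the only place where the argument is not entirely routine.
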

In particular, Theorem~\ref{thm:bruin} and Proposition~\ref{prop:Yproperties}\eqref{item:alluptoconstantBrclass} imply that, up to a constant Brauer class, any geometrically standard conic bundle over \(\mathbb P^2\) with smooth quartic discriminant curve is birational over \(\mathbb P^2\) to one of the form~\eqref{eqn:DoubleCover}.

\begin{rem}
Proposition~\ref{prop:Yproperties}\eqref{item:Deltatilde-point-section} shows that a point of \(\Deltatilde\) gives rise to a section of \(\pi_1\). However, not every section of \(\pi_1\) arises in this way: the rational examples constructed in the proof of Theorem~\ref{thm:main}\eqref{item:0_oval}--\eqref{item:4_ovals} all have \(\Deltatilde(\mathbb R)=\emptyset\) and admit sections of \(\pi_1\) over \(\mathbb R\). In Section~\ref{sec:dP2}, we will see another source of sections of \(\pi_1\).
\end{rem}

To each \'etale double cover \(\Deltatilde\to\Delta\), we also associate a twisted double cover of \(\mathbb P^1\times\mathbb P^2\).
\begin{defn}\label{defn:TwistedDoubleCover}
Let \(Q_1,Q_2,Q_3\) and \(\Deltatilde\to\Delta\) be as in Theorem~\ref{thm:bruin}. The \textsf{twisted double cover} \(Y_{\Deltabar/\Delta}\to\mathbb P^1\times\mathbb P^2\) associated to \(\Deltatilde\to\Delta\) is defined by the equation
\[z^2=-t_0^2 Q_1+2t_0t_1 Q_2-t_1^2Q_3.\]
\end{defn}
Since this double cover is of the form in Equation~\eqref{eqn:DoubleCover} obtained by replacing \(Q_1\) and \(Q_3\) with \(-Q_1\) and \(-Q_3\), the threefold \(Y_{\Deltabar/\Delta}\) satisfies the properties of Proposition~\ref{prop:Yproperties} (with the appropriate substitutions). In particular, \(Y_{\Deltabar/\Delta}\to\mathbb P^2\) is a conic bundle with discriminant cover \(\piDeltabar\colon \Deltabar\to\Delta\), where \[\Deltabar=(Q_1+r^2=Q_2-rs=Q_3+s^2=0)\] is the quadratic twist of \(\Deltatilde\).

The branch locus of \(\Gamma\) in Proposition~\ref{prop:Yproperties}\eqref{item:Gamma-curve} is defined by the equation \(-\det(t^2M_1+2tM_2+M_3)\), which gives the singular fibers of the quadric surface fibration \(\pi_1\). The signature of the \(4\times 4\) matrix
\[\begin{pmatrix}[c|c] t^2M_1+2tM_2+M_3 & 0 \\ \hline 0 & -1 \end{pmatrix}\]
corresponding to the quadric surface \(Y_{[t:1]}\) is constant on each interval of \(\mathbb P^1(\mathbb R)\) away from the real branch points, and at each real branch point of \(\Gamma\) the number of positive eigenvalues changes by \(\pm 1\).

Note that \(Y_{[t:1]}(\mathbb R)\neq\emptyset\) if and only if the matrix corresponding to \(Y_{[t:1]}\) is indefinite. By Witt's Decomposition Theorem \cite[Section 8]{EKM}, \(Y_{[t:1]}\) contains lines defined over \(\mathbb R\) if and only if \(Y_{[t:1]}\) has signature \((2,2)\).
We will also repeatedly use the following result of Witt to construct sections:

\begin{thm}[{\cite[Satz 22]{Witt-quadratic-forms}}]\label{thm:witt}
Let \(C\) be a smooth projective real curve and \(\pi\colon X\to C\) a quadric bundle of relative dimension \(m\geq 1\). If the induced map \(\pi(\mathbb R)\colon X(\mathbb R)\to C(\mathbb R)\) on real points is surjective, then \(\pi\) has a section defined over \(\mathbb R\).
\end{thm}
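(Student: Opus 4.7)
The plan is to produce a rational section first, then extend it to an honest section. Because $\pi$ is proper and $C$ is a smooth (hence regular) curve, the valuative criterion of properness guarantees that any $\mathbb R(C)$-point of the generic fiber $X_\eta$ extends uniquely to a morphism $C\to X$, which is automatically a section of $\pi$. The generic fiber $X_\eta$ is a quadric hypersurface of dimension $m$ in $\mathbb P^{m+1}_{\mathbb R(C)}$ cut out by a quadratic form $q$ in $m+2\geq 3$ variables, so the problem reduces to showing that $q$ is isotropic over $\mathbb R(C)$.

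For isotropy, the strategy is a real Hasse principle: a quadratic form of rank $\geq 3$ over $\mathbb R(C)$ is isotropic if and only if it is indefinite with respect to every ordering of $\mathbb R(C)$. The orderings of $\mathbb R(C)$ are classified by pairs $(c,\epsilon)$ consisting of a real point $c\in C(\mathbb R)$ and a choice of half-branch $\epsilon$ at $c$ (equivalently, a sign of a local uniformizer). I would deduce indefiniteness at each such ordering from the surjectivity hypothesis as follows: pick a sequence of real points $c_n\to c$ approaching $c$ along the $\epsilon$-side and lying outside the (finite) discriminant locus of $\pi$, so that each $X_{c_n}$ is a smooth real quadric; the hypothesis provides a real point on each $X_{c_n}$, forcing the signature of $q$ specialized at $c_n$ to be indefinite; continuity of signatures on the smooth locus of $C(\mathbb R)$ then shows that $q$ is indefinite at the ordering $(c,\epsilon)$.

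The main obstacle is the Hasse principle itself, which is the genuine content of Witt's Satz~22 and which goes beyond classical Artin--Pfister style local-global results valid only over arbitrary real fields. I would establish it by induction on the rank of $q$, using the key input (a form of Artin's resolution of Hilbert's 17th problem, specialized to function fields of real curves) that a rational function on $C$ which is nonnegative at every point of $C(\mathbb R)$ is a sum of squares in $\mathbb R(C)$. This sum-of-squares fact allows one to rewrite a putative anisotropic form that is indefinite at every ordering as a form of strictly smaller rank with the same local signature profile, eventually reducing either to a form of low rank that is visibly isotropic over $\mathbb R(C)$ or to a direct contradiction. Combining the resulting global isotropy with the extension step from the first paragraph produces the desired section of $\pi$.
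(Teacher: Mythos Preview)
The paper does not prove this theorem; it is quoted from Witt's 1937 paper and used as a black box throughout. There is therefore nothing in the paper to compare your argument against.

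That said, your outline is the standard route to Witt's result and is essentially correct. The reduction to isotropy of the generic quadratic form over \(\mathbb R(C)\) via the valuative criterion is exactly right, and the hypothesis \(m\geq 1\) is precisely what gives you \(\geq 3\) variables, the threshold at which the real local--global principle kicks in. Your description of the orderings of \(\mathbb R(C)\) as half-branches at points of \(C(\mathbb R)\) is accurate for smooth projective \(C\), and the deduction of indefiniteness at each ordering from the existence of real points on nearby smooth fibers is the right idea. You correctly flag that the substantive step is the Hasse principle itself; your induction sketch via the sum-of-squares description of totally nonnegative elements of \(\mathbb R(C)\) is the classical mechanism (in fact over a real curve one has Pythagoras number \(2\), which makes the reduction clean). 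The only place to be a bit more careful is the limiting argument: rather than speaking of ``continuity of signatures,'' it is cleaner to diagonalize \(q\) over \(\mathbb R(C)\) and read off the sign of each diagonal entry under the ordering \((c,\epsilon)\) directly from its sign on the chosen half-branch, which your choice of approach direction already encodes.
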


\subsection{The intermediate Jacobian torsor obstruction to rationality}\label{sec:IJT-background}

The classical intermediate Jacobian obstruction to \(\mathbb C\)-rationality, introduced by Clemens--Grifiths in their proof of the irrationality of the cubic threefold \cite{Clemens-Griffiths-ij}, states that the intermediate Jacobian of a \(\mathbb C\)-rational threefold must be isomorphic to a product of Jacobians of curves. Over non-closed fields, Hassett--Tschinkel \cite[Section 11.5]{ht-intersection-quadrics} \cite[Sections 3 and 4]{HT-cycle} and Benoist--Wittenberg \cite[Theorem 3.11]{bw-ij} have recently introduced a refinement of this obstruction involving the torsors over the intermediate Jacobian. Assuming for simplicity that the intermediate Jacobian is isomorphic to \(\bPic^0_{\Gamma}\) for a curve \(\Gamma\) of genus \(\geq 2\), their refinement states that moreover for each Galois-invariant geometric curve class, the corresponding torsor over the intermediate Jacobian must be isomorphic to some \(\bPic^i_{\Gamma}\). (See \cite[Theorem 3.11]{bw-ij} for the general statement.) Following \cite{FJSVV}, we refer to this as the \textsf{intermediate Jacobian torsor (IJT) obstruction}.

Since we work over \(\mathbb R\), when we mention the IJT obstruction we always mean the obstruction over \(\mathbb R\).

The IJT obstruction has been used to great effect to study rationality of geometrically rational threefolds over non-closed fields: Hassett--Tschinkel (over \(\mathbb R\)) \cite{ht-intersection-quadrics} and Benoist--Wittenberg (over arbitrary fields) \cite{bw-ij} showed this obstruction characterizes rationality for smooth complete intersections of two quadrics in \(\mathbb P^5\), and Kuznetsov--Prokhorov used it for several cases of their classification of rationality for prime Fano threefolds \cite{KP-Fano}.
However, Benoist--Wittenberg also showed that the IJT obstruction is not sufficient to characterize rationality by constructing an example of a (non geometrically standard) real conic bundle threefold \(X\to S\) whose intermediate Jacobian is trivial but such that \(S(\mathbb R)\) is disconnected; hence, the IJT obstruction vanishes but \(X\) has a Brauer obstruction to (stable) rationality over \(\mathbb R\) \cite[Theorem 5.7]{bw-cg}.

In \cite{FJSVV}, Frei--Ji--Sankar--Viray--Vogt studied the intermediate Jacobian torsors for geometrically standard conic bundle threefolds, giving an explicit desciption using certain torsors over the Prym variety of the discriminant cover \(\Deltatilde\to\Delta\) \cite[Theorem 1.1]{FJSVV}. For the double covers described in Section~\ref{sec:DoubleCover}, \cite[Theorem 1.2]{FJSVV} and \cite[Section 5]{bruin} show that the intermediate Jacobian of \(Y\) is \(P:= \Prym_{\Deltatilde/\Delta}\cong \bPic^0_{\Gamma}\), where \(\Gamma\) is the genus two curve defined in Proposition~\ref{prop:Yproperties}\eqref{item:Gamma-curve}. Moreover, in this setting \cite[Theorem 4.4]{FJSVV} gives an extended description of the torsors. In particular, there are four intermediate Jacobian torsors
\[P\sqcup \Ptilde = \piDelta_*^{-1}[\mathcal O_\Delta] \subset\bPic^0_{\Deltatilde} \quad \text{ and }\quad \Pm{1} \sqcup \Ptildem{1} = \piDelta_*^{-1} [\mathcal O_\Delta(1)] \subset\bPic^4_{\Deltatilde}\] satisfying \(\Ptilde+\Pm{1}=\Ptildem{1}\) as \(P\)-torsors, and \(\Pm{1}\cong\bPic^1_\Gamma\).
(Here \(\piDelta_*\colon\bPic_{\Deltatilde}\to\bPic_\Delta\) is the norm map.)

\(\bPic^1_\Gamma(\bb R)\neq\emptyset\) by a theorem of Lichtenbaum (see \cite[paragraph after Corollary 4]{Poonen-Stoll}). 
So the IJT obstruction vanishes if and only if \(\Ptildem{1}(\bb R)\neq\emptyset\).

\cite{FJSVV} showed that a point on \(\Ptildem{1}\) gives a Galois-invariant geometric section of the quadric surface bundle \(\pi_1\colon Y\to\mathbb P^1\), coming from a geometric section of a conic bundle over the corresponding line in \(\mathbb P^2\) \cite[Proposition 4.5]{FJSVV}. However, in general this geometric section need not descend; thus, they show that the failure of the IJT obstruction for the double covers of Section~\ref{sec:DoubleCover} comes from the nontriviality of the 2-torsion in the Brauer group of the ground field. (Indeed, when \(\Br k[2]=0\), the IJT obstruction does characterize rationality \cite[Theorem 1.4]{FJSVV}.) \cite[Theorem 1.3(1)]{FJSVV} exploits the nontriviality of \(\Br\mathbb R\) to give an example of an irrational conic bundle with \(\Ptildem{1}(\mathbb R)\neq\emptyset\). (In fact \cite[Theorem 1.3(1)]{FJSVV} is defined over \(\mathbb Q\) and has \(\Ptildem{1}(\mathbb Q)\neq\emptyset\). Their example is irrational over \(\mathbb R\), and its reduction modulo \(p\) is \(\mathbb F_p\)-rational for every \(p\neq 2\) such that \(\Delta_p\) and \(\Deltatilde_p\) are smooth.) For this construction, they gave the following geometric interpretation \cite[Section 2]{FJSVV} of the real points of \(\Ptildem{1}\) as \(\Gal(\mathbb C/\mathbb R)\)-invariant sets of four points \(p_1,p_2,p_3,p_4\in\Deltatilde(\mathbb C)\) such that:
\begin{enumerate}
\item
\(p_1,p_2,p_3,p_4\) does not span a \(2\)-plane in \(\mathbb P^4\), and
\item
\(\piDelta_*(p_1+p_2+p_3+p_4)=\Delta\cap\ell\) for a real line \(\ell\subset\mathbb P^2\). (If \(\Deltatilde(\mathbb R)=\emptyset\) then \(\ell\) does not meet \(\Delta\) transversely in any real points \cite[Lemma 5.1]{FJSVV}.)
\end{enumerate}

Note that by lower semicontinuity of rank, the property that \(\Ptildem{1}\) has an \(\mathbb R\)-point is an open condition.

\subsection{Real connected components of \(Y\) and \(\Delta\)}\label{sec:Yconnectedcomponents}
In this section, we make some observations about the real connected components of \(Y\) and the real isotopy class of the discriminant curve \(\Delta\).

For a morphism \(\phi\colon V\to W\) of quasi-projective algebraic varieties over \(\mathbb R\), we let \(\phi(\mathbb R)\colon V(\mathbb R)\to W(\mathbb R)\) denote the induced map of topological spaces on the sets of real points (with the Euclidean topology).

\begin{notation}\label{notation:ovals}
If \(f\in\mathbb R[u,v,w]\) is a homogeneous polynomial defining a smooth curve of even degree, then the sign of \(f(P)\) for \(P\in\mathbb P^2(\mathbb R)\) is well defined. We denote by \((f>0)_{\mathbb R}\) the set of real points for which \(f(P)>0\) (similarly for \(\geq,=,\leq\), and \(<\)). Every connected component of \((f=0)_{\mathbb R}\) is an oval, and the complement of \((f=0)_{\mathbb R}\) in \(\mathbb P^2(\mathbb R)\) is a disjoint union of a non-orientable set \(U_f\) and a finite number of discs \cite[Section 2.7]{Mangolte-realAG}. The non-orientable set \(U_f\) is the \textsf{outside} of the curve defined by \(f\).
\end{notation}

In the case where \(f\) defines a smooth quartic curve \(\Delta\), Zeuthen \cite{Zeuthen1874} proved the following classification result for the real isotopy class of \(\Delta\). (Recall that \(\Delta\) has 28 complex bitangents.)

\begin{tabular}{ c | c | c | c | c | c | c}
 \(\Delta(\mathbb R)\) & \(\emptyset\) & One oval & Two nested ovals & Two non-nested ovals & Three ovals & Four ovals\\ 
 \hline
 Real bitangents & \(4\) & \(4\) & \(4\) & \(8\) & \(16\) & \(28\)
\end{tabular}

Sections~\ref{sec:IrrationalExamples} and \ref{sec:RationalExamples} will contain figures illustrating all non-empty real isotopy classes. We will sometimes denote the real locus of the plane curve \(\Delta\) by \((\Delta =0)_{\mathbb R}:= \Delta(\mathbb R)\), and we will denote the outside of \(\Delta\) by \(U_\Delta\).

Next, we use the maps \(\pi_1(\mathbb R)\) and \(\pi_2(\mathbb R)\) to relate the real connected components of \(Y\) and the real connected components of the quartic \(\Delta\).

\begin{lem}\label{lem:ImComponents}
Let \(Y:= Y_{\Deltatilde/\Delta}\) be as defined in Section~\ref{sec:DoubleCover}.
The number of connected components of \(Y(\mathbb R)\) is equal to the number of connected components of its image under \(\pi_i\colon Y\to\mathbb P^i\) for \(i=1,2\).
\end{lem}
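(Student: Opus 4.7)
The plan is to apply a standard topological principle to the induced map $\pi_i(\mathbb R) \colon Y(\mathbb R) \to \pi_i(Y(\mathbb R))$: a closed continuous surjection with connected fibers induces a bijection on sets of connected components. Surjectivity onto the image is automatic. Since $Y$ is projective, $Y(\mathbb R)$ is compact Hausdorff, so $\pi_i(\mathbb R)$ is closed onto its image (given the subspace topology from $\mathbb P^i(\mathbb R)$). The principle itself follows by a standard argument: for any connected component $C$ of the image, a nontrivial decomposition $\pi_i(\mathbb R)^{-1}(C) = U \sqcup V$ into disjoint nonempty relatively closed subsets would, by fiber connectedness, push forward under $\pi_i(\mathbb R)$ to a nontrivial decomposition of $C$ into two disjoint nonempty closed subsets, contradicting the connectedness of $C$.

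The main step is to verify that the nonempty real fibers of $\pi_i$ are connected. For $\pi_2$, the fiber over $[u_0:v_0:w_0] \in \mathbb P^2(\mathbb R)$ is the real plane conic $z^2 = at_0^2 + 2bt_0 t_1 + ct_1^2$ with $(a,b,c) = (Q_1,Q_2,Q_3)(u_0,v_0,w_0)$. For $\pi_1$, the fiber over $[t_0:t_1] \in \mathbb P^1(\mathbb R)$ is the real quadric surface in $\mathbb P^3_{[u:v:w:z]}$ cut out by $-z^2 + t_0^2 Q_1 + 2t_0 t_1 Q_2 + t_1^2 Q_3 = 0$. In both cases I claim that the real locus of any such quadric is either empty or connected. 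For smooth quadrics this follows from the signature classification: smooth real plane conics with real points are homeomorphic to $S^1$, and smooth real quadric surfaces in $\mathbb P^3$ with real points are homeomorphic to either $S^2$ (signature $(3,1)$) or the torus $S^1 \times S^1$ (signature $(2,2)$). For a singular real quadric of rank $r \geq 1$, the underlying variety is the projective cone with vertex a linear subspace $\Lambda$ defined over $\mathbb R$. Then $\Lambda(\mathbb R)$ is a nonempty connected subset of the real locus, and any real point of the quadric lies on a real line of the quadric joining it to a real point of $\Lambda$, so the real locus is path-connected through $\Lambda(\mathbb R)$.

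I expect the point requiring the most care to be the connectedness of the singular fibers of $\pi_1$, which occur over the real branch points of the genus-$2$ curve $\Gamma \to \mathbb P^1$ from Proposition~\ref{prop:Yproperties}\eqref{item:Gamma-curve}, where the quadric surface degenerates into a rank-$3$ cone, a pair of planes, or a double plane. The cone argument above handles these cases uniformly, and a nearly identical argument handles the degenerate fibers of $\pi_2$. Combining fiber-connectedness for both projections with the topological principle in the first paragraph yields the claimed bijection of $\pi_0$.
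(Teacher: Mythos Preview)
Your argument is correct and follows the same strategy as the paper: show that \(\pi_i(\mathbb R)\) is a closed continuous map with connected (quadric) fibers, then invoke the standard topological principle. The only cosmetic difference is that you obtain closedness directly from compactness of \(Y(\mathbb R)\), whereas the paper factors \(\pi_i\) as the finite double cover followed by a projection from a compact factor; your detailed case analysis of the real loci of smooth and singular quadrics simply fills in what the paper asserts in one line.
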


\begin{proof}
Since \(\pi_i\colon Y\to\mathbb P^i\) is the finite morphism \(\piDoubleCover\) composed with the projection \(\mathbb P^1\times\mathbb P^2\to\mathbb P^i\), it follows from \cite[Theorem 4.2]{DelfsKnebusch} and compactness of \(\mathbb P^n(\mathbb R)\) that \(\pi_i(\mathbb R)\) is a continuous closed map.
The claim then holds since the fibers of \(\pi_i\) are positive-dimensional quadrics and in particular have connected real loci.
\end{proof}

\begin{lem}\label{lem:Yleq3components}
If \(Y:= Y_{\Deltatilde/\Delta}\) is as defined in Section~\ref{sec:DoubleCover}, then
\(Y(\mathbb R)\) has at most three connected components.

\end{lem}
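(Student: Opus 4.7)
\emph{Proof plan.}  The plan is to apply Lemma~\ref{lem:ImComponents} to reduce to bounding the number of connected components of the image $\pi_1(Y(\mathbb R)) \subseteq \mathbb P^1(\mathbb R) \cong S^1$, and then analyze this image by combining the quadric surface bundle structure of $\pi_1\colon Y \to \mathbb P^1$ with the degree of the binary form $\det M(t)$ appearing in the equation of $\Gamma$ from Proposition~\ref{prop:Yproperties}\eqref{item:Gamma-curve}.

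First I would identify $\pi_1(Y(\mathbb R))$ explicitly.  For $t = [t_0:t_1] \in \mathbb P^1(\mathbb R)$, the fiber $Y_t \subset \mathbb P^3_{[u:v:w:z]}$ is the quadric $\{z^2 = Q(t)(u,v,w)\}$, where $Q(t)$ has symmetric matrix $M(t) := t_0^2 M_1 + 2 t_0 t_1 M_2 + t_1^2 M_3$.  Because of the $+z^2$ term, the ternary quadratic form $z^2 - Q(t)$ is positive definite exactly when $M(t)$ is negative definite and is indefinite otherwise, so $Y_t(\mathbb R) = \emptyset$ if and only if $M(t)$ is negative definite.  Hence the locus $N := \{t \in \mathbb P^1(\mathbb R) : M(t) \text{ is negative definite}\}$ is open in $S^1$, and $\pi_1(Y(\mathbb R)) = S^1 \setminus N$.

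The main step is to bound by $3$ the number $k$ of open-arc connected components of $N$.  At each endpoint of such an arc, continuity of eigenvalues forces an eigenvalue of $M(t)$ to vanish (all three eigenvalues are negative immediately inside $N$ while at least one is nonnegative immediately outside), so $\det M$ vanishes there.  If two arcs of $N$ share an endpoint $t_*$, then the vanishing eigenvalue attains a local maximum of value $0$ at $t_*$, so by analytic perturbation of eigenvalues $\det M$ has a zero of multiplicity at least $2$ at $t_*$.  Counting with multiplicity, the $2k$ endpoint-incidences of the arcs of $N$ therefore contribute at least $2k$ real roots of $\det M$ on $\mathbb P^1(\mathbb R)$.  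Since each entry of $M(t)$ is a binary form of degree $2$ in $(t_0,t_1)$, the polynomial $\det M(t)$ is a binary form of degree $6$, and it is not identically zero because $y^2 = -\det M(t)$ defines the smooth genus $2$ curve $\Gamma$ of Proposition~\ref{prop:Yproperties}\eqref{item:Gamma-curve}.  Thus $2k \leq 6$, giving $k \leq 3$.

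To conclude, the complement of $k$ disjoint open arcs in $S^1$ has exactly $k$ components if $1 \leq k$, and has $1$ or $0$ components in the degenerate cases $N = \emptyset$ or $N = S^1$ respectively; in every case $\pi_1(Y(\mathbb R))$ has at most three components, and Lemma~\ref{lem:ImComponents} transfers this bound to $Y(\mathbb R)$.  The main point requiring care is the multiplicity bookkeeping at endpoints shared by two arcs of $N$, which is handled by the analyticity of eigenvalue branches.
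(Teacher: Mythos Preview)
Your proof is correct and follows essentially the same approach as the paper: reduce via Lemma~\ref{lem:ImComponents} to the image in $\mathbb P^1(\mathbb R)$, then bound components using that the signature of the fibers of $\pi_1$ can only change at the at most six real roots of the degree-six form $\det M(t)$. The paper's argument is shorter because smoothness of the genus~$2$ curve $\Gamma$ already forces $\det M(t)$ to be squarefree, so distinct arcs of $N$ never share an endpoint and your analytic-perturbation multiplicity bookkeeping is unnecessary; you could have invoked this squarefreeness in place of Rellich-type arguments.
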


\begin{proof}
By Lemma~\ref{lem:ImComponents}, it suffices to show that the image of \(\pi_1(\mathbb R)\) has at most three components. The signature of the fibers of \(\pi_1\) can only change at the real branch points of the genus two curve \(\Gamma\) defined in Proposition~\ref{prop:Yproperties}\eqref{item:Gamma-curve}, so the number of connected components of \(\pi_1(\mathbb R)\) is at most half the number of real branch points and so is at most \(\tfrac{1}{2}\cdot 6=3\).
\end{proof}

\begin{lem}\label{lem:Impi2}
In the setting of Section~\ref{sec:DoubleCover}, the image of \(\pi_2(\mathbb R)\) is \((Q_1\geq 0)_{\mathbb R}\cup(Q_1Q_3-Q_2^2\leq 0)_{\mathbb R} \subseteq\mathbb P^2(\mathbb R)\).
\end{lem}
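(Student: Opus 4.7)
\emph{Proof proposal.} A point $P=[u:v:w]\in\mathbb P^2(\mathbb R)$ lies in the image of $\pi_2(\mathbb R)$ if and only if the fiber $\pi_2^{-1}(P)\subset Y$ contains a real point. Writing $a=Q_1(P)$, $b=Q_2(P)$, $c=Q_3(P)$, the fiber is (the projective completion in the appropriate weighted $\mathbb P^2$ over $\mathbb R$ of) the locus cut out by
\[
z^2 \;=\; a\,t_0^2 + 2b\,t_0t_1 + c\,t_1^2 \;=:\; f(t_0,t_1).
\]
A real point on this fiber requires $(t_0,t_1)\in\mathbb R^2\smallsetminus\{(0,0)\}$ and $z\in\mathbb R$ with $z^2=f(t_0,t_1)$; since such a $z$ exists precisely when $f(t_0,t_1)\geq 0$, the first step is to reduce the problem to: \emph{$P$ lies in the image iff the real binary quadratic form $f$ takes a nonnegative value on some nonzero real pair $(t_0,t_1)$.} Equivalently, $P$ is \emph{not} in the image iff $f$ is negative definite.

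Next I would apply the standard criterion for negative-definiteness of a real binary quadratic form: $f$ is negative definite if and only if $a<0$ and the determinant $ac-b^2$ of its Gram matrix is strictly positive. Negating this, $f$ fails to be negative definite if and only if $a\geq 0$ or $ac-b^2\leq 0$. Translating back, $P\in\pi_2(Y(\mathbb R))$ if and only if
\[
Q_1(P)\geq 0 \quad\text{or}\quad (Q_1Q_3-Q_2^2)(P)\leq 0,
\]
which is exactly the asserted description of the image.

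There is essentially no obstacle: the argument is a pointwise translation through the defining equation \eqref{eqn:DoubleCover} of $Y$ together with Sylvester's criterion for a $2\times 2$ symmetric real matrix. The only small care needed is to handle the degenerate case where $a=b=c=0$ (then $f\equiv 0$ and any real $(t_0,t_1)\neq 0$ with $z=0$ lifts $P$, which is consistent because such a $P$ satisfies $Q_1(P)=0\geq 0$), and to confirm that we work with the correct projective model of the fiber so that requiring $(t_0,t_1)\neq(0,0)$ is the right nondegeneracy condition; both are immediate from the construction of $Y$ as a double cover of $\mathbb P^1\times\mathbb P^2$.
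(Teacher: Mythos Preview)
Your proposal is correct and takes essentially the same approach as the paper: both reduce to the observation that the fiber over $P$ has a real point if and only if the $2\times 2$ Gram matrix $\begin{pmatrix} Q_1(P) & Q_2(P) \\ Q_2(P) & Q_3(P) \end{pmatrix}$ is not negative definite, and then invoke Sylvester's criterion. The paper phrases this via the full $3\times 3$ matrix of the conic fiber (with the extra $-1$ entry for $z^2$), whereas you work directly with the binary form $f$, but the content is identical.
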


\begin{proof}
The fiber of \(\pi_2\) above \(P\in\mathbb P^2(\mathbb R)\) is the conic corresponding to the symmetric matrix
\[\begin{pmatrix} Q_1(P) & Q_2(P) & 0 \\ Q_2(P) & Q_3(P) & 0 \\ 0 & 0 & -1 \end{pmatrix}\]
so the fiber contains an \(\mathbb R\)-point if and only if the top \(2\times 2\) submatrix is not negative definite. By Sylvester's criterion, this submatrix is negative definite if and only if \(Q_1(P)<0\) and \((Q_1Q_3-Q_2^2)(P)>0\).
\end{proof}

From Proposition~\ref{prop:Yproperties}\eqref{item:Yconicbundle} and Lemma~\ref{lem:Impi2}, it immediately follows that:
\begin{cor}
If \(Y_{\Deltatilde/\Delta}(\mathbb R)=\emptyset\), then \(\Delta(\mathbb R)=\emptyset\). If \(\Delta(\mathbb R)\neq\emptyset\), then \(Y_{\Deltatilde/\Delta}\) is unirational.
\end{cor}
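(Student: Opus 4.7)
The plan is to deduce both statements directly from Lemma~\ref{lem:Impi2} together with Proposition~\ref{prop:Yproperties}\eqref{item:Yconicbundle}, with essentially no new input beyond these two results. The key observation is that the image of \(\pi_2(\mathbb R)\) described in Lemma~\ref{lem:Impi2} always contains the real locus of \(\Delta=(Q_1Q_3-Q_2^2=0)\), since every real point of \(\Delta\) lies in the closed locus \((Q_1Q_3-Q_2^2\le 0)_{\mathbb R}\).

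For the first assertion, I would argue by contrapositive. Assuming \(\Delta(\mathbb R)\neq\emptyset\), pick any \(P\in\Delta(\mathbb R)\). Then \(P\in (Q_1Q_3-Q_2^2\le 0)_{\mathbb R}\subseteq \mathrm{Im}\,\pi_2(\mathbb R)\) by Lemma~\ref{lem:Impi2}, so the fiber \(\pi_2^{-1}(P)\) contains a real point; in particular \(Y(\mathbb R)\neq\emptyset\). This immediately gives the first statement.

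For the second assertion, by Proposition~\ref{prop:Yproperties}\eqref{item:Yconicbundle} it suffices to produce a real point on a \emph{smooth} fiber of \(\pi_2\), i.e.\ a point \(P\in\mathbb P^2(\mathbb R)\) with \((Q_1Q_3-Q_2^2)(P)\neq 0\) that lies in the image of \(\pi_2(\mathbb R)\). Here I would use that \(\Delta\) is smooth and nonempty: its real locus is a smooth real \(1\)-manifold in \(\mathbb P^2(\mathbb R)\) whose complement is a disjoint union of open regions on which \(Q_1Q_3-Q_2^2\) has constant sign, and crossing any oval flips that sign. Hence \(\Delta(\mathbb R)\neq\emptyset\) forces the open set \((Q_1Q_3-Q_2^2<0)_{\mathbb R}\) to be nonempty. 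Any point \(P\) in this open set satisfies both \(P\in\mathrm{Im}\,\pi_2(\mathbb R)\) (by Lemma~\ref{lem:Impi2}) and \(P\notin\Delta\), so the fiber \(\pi_2^{-1}(P)\) is a smooth conic with a real point, and unirationality of \(Y\) follows.

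Neither step presents a real obstacle; the whole argument is essentially bookkeeping once Lemma~\ref{lem:Impi2} is in hand. The only point requiring a small amount of care is justifying that \(\Delta(\mathbb R)\neq\emptyset\) implies that \(Q_1Q_3-Q_2^2\) actually takes negative values on some open subset of \(\mathbb P^2(\mathbb R)\), rather than only vanishing on \(\Delta(\mathbb R)\). This is where the smoothness of \(\Delta\) is essential: at a smooth real point the defining polynomial changes sign transversally, ruling out the pathological possibility that \(Q_1Q_3-Q_2^2\) is globally nonnegative with isolated real zeros on \(\Delta\).
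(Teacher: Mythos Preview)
Your proposal is correct and follows exactly the route the paper indicates: the corollary is stated as an immediate consequence of Proposition~\ref{prop:Yproperties}\eqref{item:Yconicbundle} and Lemma~\ref{lem:Impi2}, and you have simply spelled out the details. The only extra care you take---observing that smoothness of \(\Delta\) at a real point forces \(Q_1Q_3-Q_2^2\) to change sign, so that \((Q_1Q_3-Q_2^2<0)_{\mathbb R}\neq\emptyset\) and hence a \emph{smooth} fiber of \(\pi_2\) has a real point---is precisely the small gap one must fill to invoke the unirationality clause of Proposition~\ref{prop:Yproperties}\eqref{item:Yconicbundle}.
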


\begin{cor}\label{cor:components-of-Y-and-Delta}
If \(Y_{\Deltatilde/\Delta}(\mathbb R)\) is disconnected, then \(\Delta(\mathbb R)\) must be two or three ovals. More precisely:
\begin{enumerate}
\item If \(Y_{\Deltatilde/\Delta}(\mathbb R)\) has three connected components, then \(\Delta(\mathbb R)\) is three ovals; and
\item If \(Y_{\Deltatilde/\Delta}(\mathbb R)\) has two connected components, then \(\Delta(\mathbb R)\) is two non-nested ovals or two nested ovals.\end{enumerate}
\end{cor}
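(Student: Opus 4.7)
The plan is to combine Lemmas~\ref{lem:ImComponents} and~\ref{lem:Impi2} to turn the problem into a topological question on \(\mathbb P^2(\mathbb R)\). Set \(f := Q_1Q_3 - Q_2^2\), so that \(\Delta = (f=0)\), and let \(U := (Q_1 < 0)_{\mathbb R} \cap (f>0)_{\mathbb R}\). By Lemma~\ref{lem:Impi2} the image of \(\pi_2(\mathbb R)\) is \(\mathbb P^2(\mathbb R)\setminus U\), and by Lemma~\ref{lem:ImComponents} this has the same number of connected components as \(Y_{\Deltatilde/\Delta}(\mathbb R)\). The first key observation is that \(Q_1\) cannot vanish on \((f>0)_{\mathbb R}\) (otherwise \(Q_1Q_3 = Q_2^2 \geq 0\) contradicts \(f > 0\)), so \(Q_1\) has constant sign on each connected component of \((f>0)_{\mathbb R}\); hence \(U\) is exactly the union of those components of \((f>0)_{\mathbb R}\) on which \(Q_1 < 0\). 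Since each such component is also a connected component of \(\mathbb P^2(\mathbb R)\setminus\Delta(\mathbb R)\), the removed set \(U\) is a union of components of \(\mathbb P^2(\mathbb R)\setminus\Delta(\mathbb R)\).

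Next I would describe \(\mathbb P^2(\mathbb R)\setminus\Delta(\mathbb R)\) by isotopy class of \(\Delta\), using Zeuthen's classification. For the pairwise non-nested configurations \(\Delta(\mathbb R)\in\{\emptyset,\text{one oval},\text{two non-nested},\text{three ovals},\text{four ovals}\}\), the complement consists of the non-orientable outside \(U_\Delta\) together with one open disk inside each oval, and the closures of distinct disks are pairwise disjoint; for two nested ovals, the complement consists of an innermost disk \(A\), an annulus \(B\), and the outer non-orientable piece \(U_\Delta\). Because \(f\) alternates sign across each smooth oval, the possible unions forming \((f>0)_{\mathbb R}\) are determined up to a global sign choice, and \(U\) is a sub-union. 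The key topological point for the non-nested cases is that \(\overline{U_\Delta}\) is a connected ``hub'' that meets the closure of every disk component along its bounding oval; so if \(U_\Delta\not\subseteq U\), then \(\mathbb P^2(\mathbb R)\setminus U\) is connected, while if \(U = U_\Delta\), the image is the disjoint union of the closures of the remaining disks. For two nested ovals one argues analogously: removing \(U_\Delta\) or \(A\) still leaves a connected image (since \(\overline B\) glues what remains along the bounding ovals), and the only way to disconnect is to remove the annulus \(B\), which separates \(\overline A\) from \(\overline{U_\Delta}\).

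Running the case analysis yields \(\leq 1\) component for \(\Delta(\mathbb R)\in\{\emptyset,\text{one oval}\}\); \(\leq 2\) components for two non-nested (two disjoint closed disks) and two nested (innermost closed disk separated from the outer closed Möbius by the removed annulus); \(\leq 3\) components for three ovals (three disjoint closed disks); and combinatorially up to \(4\) components for four ovals. The main obstacle is the four-oval case, where the naive topological bookkeeping allows \(U = U_\Delta\) to leave four disjoint closed disks. To rule this out I would invoke Lemma~\ref{lem:Yleq3components}, which bounds the number of real components of \(Y_{\Deltatilde/\Delta}(\mathbb R)\) by \(3\); this forces the four-component configuration not to occur, so for four ovals the image is always connected. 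Assembling the case-by-case bounds proves the corollary: three components force \(\Delta(\mathbb R)\) to be three ovals, and two components force \(\Delta(\mathbb R)\) to be two nested or two non-nested ovals.
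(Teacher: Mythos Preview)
Your proof is correct and follows essentially the same approach as the paper: both combine Lemmas~\ref{lem:ImComponents}, \ref{lem:Yleq3components}, and~\ref{lem:Impi2} with the observation that \((Q_1=0)_{\mathbb R}\subset(Q_1Q_3-Q_2^2\leq 0)_{\mathbb R}\) (equivalently, \(Q_1\) has constant sign on each component of \((f>0)_{\mathbb R}\)) to count the components of the image of \(\pi_2(\mathbb R)\). The only difference is organizational---the paper splits into cases by the signature of \(Q_1\) (positive definite, negative definite, indefinite) rather than by the isotopy class of \(\Delta\)---but the arguments are otherwise parallel, and in particular both invoke Lemma~\ref{lem:Yleq3components} to exclude the four-component configuration in the four-oval case.
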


\begin{proof}
If \(Q_1\) is positive definite, then the image of \(\pi_2(\mathbb R)\) is \(\mathbb P^2(\mathbb R)\) by Lemma~\ref{lem:Impi2}, so we may assume that \(Q_1\) is negative definite or indefinite.
First suppose \(Q_1\) is negative definite. Then the image of \(\pi_2(\mathbb R)\) is \((Q_1Q_3-Q_2^2\leq 0)_{\mathbb R}\), which can only be disconnected if \(\Delta(\mathbb R)\) is two or more ovals. If \((Q_1Q_3-Q_2^2\leq 0)_{\mathbb R}\) is disconnected, then it has the same number of connected components as \(\Delta(\mathbb R)\) and, by Lemma~\ref{lem:ImComponents}, it also has the same number of connected components as \(Y(\mathbb R)\). So by Lemma~\ref{lem:Yleq3components}, \(\Delta(\mathbb R)\) is either two or three ovals.

It remains to consider the case when \(Q_1\) is indefinite, so its real locus is one oval. Since \((Q_1=0)_{\mathbb R}\subset(Q_1Q_3-Q_2^2\leq 0)_{\mathbb R}\), we have that \((Q_1\geq 0)_{\mathbb R}\cup (Q_1Q_3-Q_2^2\leq 0)_{\mathbb R}\) is either equal to \((Q_1Q_3-Q_2^2\leq 0)_{\mathbb R}\) or all of \(\mathbb P^2(\mathbb R)\). Thus, again using Lemma~\ref{lem:Yleq3components} to rule out the four ovals case when \(Y(\mathbb R)\) is disconnected, we conclude that \(Y(\mathbb R)\) is disconnected if and only if \(\Delta(\mathbb R)\) is either two or three ovals, and that in the disconnected case \(Y(\mathbb R)\) and \(\Delta(\mathbb R)\) have the same number of connected components.
\end{proof}

\begin{rem}
All cases in Corollary~\ref{cor:components-of-Y-and-Delta} occur; see Section~\ref{sec:IrrationalExamples} and \cite[Theorem 1.3(1)]{FJSVV}.
\end{rem}

\begin{rem}\label{rem:2,2sig-nested}
If \(Y_{\Deltatilde/\Delta}(\mathbb R)\) is disconnected and \(\pi_1\) has a fiber with signature \((2,2)\), then the real isotopy class of \(\Delta\) is two nested ovals. Indeed, Corollary~\ref{cor:components-of-Y-and-Delta} and the fact that \(\Gamma\) has at most six real Weierstrass points imply that the image of \(Y_{\Deltatilde/\Delta}(\mathbb R)\) in \(\mathbb P^2(\mathbb R)\) is \((Q_1Q_3-Q_2^2\leq 0)_{\mathbb R}\) and that \(\Delta(\mathbb R)\) consists of two ovals. After a coordinate change on \(\mathbb P^1\) \cite[Theorem 2.6]{FJSVV} we may assume \(Q_1\) has signature \((2,1)\). Recalling that \(U_{Q_1}\) denotes the outside of the plane conic \(Q_1\), the signature assumption on \(Q_1\) implies \(U_{Q_1}=(Q_1>0)_{\mathbb R}\). Since \(U_{Q_1}\) is not orientable it cannot be contained in a disc, so \(U_{Q_1} \subset (Q_1Q_3-Q_2^2\leq 0)_{\mathbb R}\) implies that \(U_\Delta\) is one of the two connected components of \((Q_1Q_3-Q_2^2< 0)_{\mathbb R}\), which implies the two ovals of \(\Delta\) must be nested.
\end{rem}

We now relate the real points of \(\Deltatilde\) to those of the corresponding curve on the the twisted double cover.

\begin{lem}\label{lem:pi1surjDeltatildeEmpty}
Let \(\Deltatilde\to\Delta\) and \(\Deltabar\to\Delta\) be as defined in Section~\ref{sec:DoubleCover}. Then
\[ \Delta(\mathbb R)=\mathrm{Im}\left( \piDelta(\mathbb R)\colon\Deltatilde(\mathbb R)\to\Delta(\mathbb R) \right) \sqcup \mathrm{Im} \left(\piDeltabar(\mathbb R)\colon\Deltabar(\mathbb R)\to\Delta(\mathbb R)\right).
\]
In particular, the map
\(\piDelta(\mathbb R)\colon\Deltatilde(\mathbb R)\to\Delta(\mathbb R)\) on real points is surjective if and only if \(\Deltabar(\mathbb R)=\emptyset\).
\end{lem}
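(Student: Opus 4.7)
The plan is to work directly with the explicit defining equations of $\Deltatilde$ and $\Deltabar$ and to determine, for each $P\in\Delta(\mathbb R)$, in which fibers $P$ has a real lift. By Theorem~\ref{thm:bruin} and Definition~\ref{defn:TwistedDoubleCover}, a real lift of $P$ in $\Deltatilde$ amounts to a solution $(r,s)\in\mathbb R^2$ of $r^2=Q_1(P)$, $rs=Q_2(P)$, $s^2=Q_3(P)$, while a real lift in $\Deltabar$ amounts to a solution of $r^2=-Q_1(P)$, $rs=Q_2(P)$, $s^2=-Q_3(P)$. Hence $P\in\mathrm{Im}(\piDelta(\mathbb R))$ iff $Q_1(P)\geq 0$ and $Q_3(P)\geq 0$, and $P\in\mathrm{Im}(\piDeltabar(\mathbb R))$ iff $Q_1(P)\leq 0$ and $Q_3(P)\leq 0$. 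In both cases, once the sign conditions hold, taking square roots and then adjusting one sign (using $Q_1(P)Q_3(P)=Q_2(P)^2$, so that $rs$ automatically equals $\pm Q_2(P)$) produces the required $r,s\in\mathbb R$.

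Next, I would use the identity $Q_1(P)Q_3(P)=Q_2(P)^2\geq 0$ on $\Delta(\mathbb R)$ to conclude that $Q_1(P)$ and $Q_3(P)$ always have the same (weak) sign. This shows every $P\in\Delta(\mathbb R)$ lies in at least one of the two images, giving the set-theoretic equality $\Delta(\mathbb R)=\mathrm{Im}(\piDelta(\mathbb R))\cup\mathrm{Im}(\piDeltabar(\mathbb R))$.

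The main obstacle, and the only step that uses more than the equations themselves, is proving \emph{disjointness}: the intersection of the two images consists precisely of those $P$ with $Q_1(P)=Q_3(P)=0$, which then also forces $Q_2(P)=0$. I would rule this out by differentiating $f=Q_1Q_3-Q_2^2$; every partial derivative has the shape $(\partial Q_1)Q_3+Q_1(\partial Q_3)-2Q_2(\partial Q_2)$ and therefore vanishes at any such $P$, contradicting the assumed smoothness of $\Delta=(f=0)$. This yields the disjoint union.

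Finally, for the ``in particular'' clause, since $\piDeltabar\colon\Deltabar\to\Delta$ is surjective as a morphism of varieties, its image on real points is empty if and only if $\Deltabar(\mathbb R)=\emptyset$. Combined with the disjoint union just established, this gives the equivalence: $\piDelta(\mathbb R)$ is surjective iff $\mathrm{Im}(\piDeltabar(\mathbb R))=\emptyset$ iff $\Deltabar(\mathbb R)=\emptyset$.
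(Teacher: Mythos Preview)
Your argument is correct and in fact more direct than the paper's. You work purely pointwise with the defining equations, characterizing membership in each image via the signs of both $Q_1(P)$ and $Q_3(P)$, and you secure disjointness by observing that a common point forces $Q_1(P)=Q_2(P)=Q_3(P)=0$ and hence $\nabla(Q_1Q_3-Q_2^2)(P)=0$, contradicting smoothness of $\Delta$.

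The paper instead tracks only the sign of $Q_1$ and argues topologically: it notes that $(Q_1=0)_{\mathbb R}\cap\Delta(\mathbb R)$ is finite and that $(Q_1=0)_{\mathbb R}$, being contained in $(Q_1Q_3-Q_2^2\le 0)_{\mathbb R}$, cannot cross an oval of $\Delta(\mathbb R)$, so $Q_1$ has constant weak sign on each oval. It then uses that $\piDelta(\mathbb R)$ and $\piDeltabar(\mathbb R)$ are closed maps (as induced by finite morphisms) to pass from the open inclusions $\{\pm Q_1>0\}\cap\Delta(\mathbb R)\subset\mathrm{Im}(\piDelta^{\pm}(\mathbb R))$ to the entire ovals. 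Your Jacobian computation is exactly what pins down disjointness at the finitely many points where $Q_1(P)=0$, a step the paper's topological route does not make explicit. Conversely, the paper's argument yields the slightly stronger structural conclusion that each oval of $\Delta(\mathbb R)$ lies wholly in one image or the other, which your pointwise argument does not state (though it follows easily once one knows the images are closed).

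One minor remark on your final paragraph: the clause ``since $\piDeltabar$ is surjective as a morphism of varieties'' is not the relevant point and can be dropped. What you actually use is that $\mathrm{Im}(\piDeltabar(\mathbb R))=\emptyset$ if and only if $\Deltabar(\mathbb R)=\emptyset$, which is immediate for any map of sets.
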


\begin{proof}
First, we note that the real points of \(\Deltatilde\) lie over the locus \((\Delta=0)_{\mathbb R}\cap(Q_1\geq 0)_{\mathbb R}\), and the real points of \(\Deltabar\) lie over \((\Delta=0)_{\mathbb R}\cap(-Q_1\geq 0)_{\mathbb R}\). From the equations \eqref{eqn:DeltatildeDelta} we see that
\begin{equation}\label{eqn:image-of-twist}
(\Delta=0)_{\mathbb R}\cap(Q_1> 0)_{\mathbb R}\subset\mathrm{Im}\piDelta(\mathbb R) \quad \text{ and } \quad (\Delta=0)_{\mathbb R}\cap(-Q_1> 0)_{\mathbb R}\subset\mathrm{Im}\piDeltabar(\mathbb R).\end{equation}
The intersection \((Q_1=0)_{\mathbb R} \cap(\Delta=0)_{\mathbb R}\) is at most a finite number of points, since otherwise \(\Delta\) is not smooth.
Furthermore, since \((Q_1=0)_{\mathbb R}\) is connected and is contained in \((Q_1Q_3-Q_2^2\leq 0)_{\mathbb R}\), it cannot cross \((\Delta=0)_{\mathbb R}\), so on each oval of \(\Delta(\mathbb R)\) either \(Q_1\geq 0\) or \(-Q_1\geq 0\). Thus each connected component from each set in \eqref{eqn:image-of-twist} above is an oval of \(\Delta(\mathbb R)\) minus a finite number of points, and each oval of \(\Delta(\mathbb R)\) contains points in one of the two sets in \eqref{eqn:image-of-twist}.
Both \(\Deltatilde\) and \(\Deltabar\) are smooth projective curves, so their real loci are homeomorphic to a (possibly empty) disjoint union of circles \cite[Section 3.3]{Mangolte-realAG}. Since \(\piDelta\) and \(\piDeltabar\) are finite morphisms, the induced maps \(\piDelta(\mathbb R)\) and \(\piDelta(\mathbb R)\) are closed by \cite[Theorem 4.2]{DelfsKnebusch}. Therefore, the closure (in \(\Delta(\mathbb R)\)) of each component in the lefthand set in \eqref{eqn:image-of-twist} is in the image of \(\piDelta(\mathbb R)\), and the closure of each component in the righthand set in \eqref{eqn:image-of-twist} is in the image of \(\piDeltabar(\mathbb R)\).
\end{proof}

It follows that if \(\Delta(\mathbb R)\neq\emptyset\) and \(\piDelta(\mathbb R)\) is not surjective, then \(\Deltabar\) has an \(\mathbb R\)-point and so \(Y_{\Deltabar/\Delta}\) is rational. In particular, when \(\Delta(\mathbb R)\neq\emptyset\) we have the following:
\begin{cor}\label{cor:YorTwistRational}
If \(\Delta(\mathbb R)\neq\emptyset\), then at least one of \(Y_{\Deltatilde/\Delta}\) or \(Y_{\Deltabar/\Delta}\) is rational.
\end{cor}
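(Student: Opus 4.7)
The plan is to combine Lemma~\ref{lem:pi1surjDeltatildeEmpty} with Proposition~\ref{prop:Yproperties} parts \eqref{item:Yquadricsurfacebundle} and \eqref{item:Deltatilde-point-section}. By Lemma~\ref{lem:pi1surjDeltatildeEmpty}, the real locus of \(\Delta\) decomposes as the disjoint union
\[ \Delta(\mathbb R)=\mathrm{Im}\, \piDelta(\mathbb R) \;\sqcup\; \mathrm{Im}\,\piDeltabar(\mathbb R).\]
Since \(\Delta(\mathbb R)\neq\emptyset\) by hypothesis, at least one of the two summands is nonempty, hence at least one of \(\Deltatilde(\mathbb R)\) or \(\Deltabar(\mathbb R)\) is nonempty.

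If \(\Deltatilde(\mathbb R)\neq\emptyset\), then Proposition~\ref{prop:Yproperties}\eqref{item:Deltatilde-point-section} gives a real section of the quadric surface bundle \(\pi_1\colon Y_{\Deltatilde/\Delta}\to\mathbb P^1\), and Proposition~\ref{prop:Yproperties}\eqref{item:Yquadricsurfacebundle} then implies that \(Y_{\Deltatilde/\Delta}\) is rational. Otherwise \(\Deltabar(\mathbb R)\neq\emptyset\); since \(Y_{\Deltabar/\Delta}\) is defined by an equation of the same form~\eqref{eqn:DoubleCover} as \(Y_{\Deltatilde/\Delta}\) (with \(Q_1,Q_3\) replaced by \(-Q_1,-Q_3\)), it also satisfies the conclusions of Proposition~\ref{prop:Yproperties}, and the same argument applied to \(Y_{\Deltabar/\Delta}\) yields its rationality.

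There is no real obstacle here: the entire content of the corollary is that Lemma~\ref{lem:pi1surjDeltatildeEmpty} forces at least one of the two double covers of \(\Delta\) to carry a real point, and a real point on either cover lifts to a section of the associated quadric surface bundle by Proposition~\ref{prop:Yproperties}\eqref{item:Deltatilde-point-section}. The proof should therefore be a two-sentence deduction from the preceding lemma.
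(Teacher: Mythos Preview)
Your proof is correct and follows the same approach as the paper: the corollary is deduced directly from Lemma~\ref{lem:pi1surjDeltatildeEmpty}, which forces at least one of \(\Deltatilde\) or \(\Deltabar\) to have a real point, and then Proposition~\ref{prop:Yproperties}\eqref{item:Deltatilde-point-section}--\eqref{item:Yquadricsurfacebundle} gives rationality of the corresponding double cover. The paper states this in the sentence immediately preceding the corollary rather than as a formal proof.
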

However, when \(\Delta(\mathbb R)=\emptyset\) it is possible for both \(Y_{\Deltatilde/\Delta}\) and \(Y_{\Deltabar/\Delta}\) to be irrational, see Example~\ref{exmp:0ovalIJT}.

\section{The degree \(2\) del Pezzo surface and the intermediate Jacobian torsor obstruction}\label{sec:dP2}

In this section, we prove Theorem~\ref{thm:DoubleCoverRationalityCriteria}, which characterizes rationality for the double covers in Section~\ref{sec:DoubleCover} for all but two isotopy classes of the quartic \(\Delta\).
The key technical input to Theorem~\ref{thm:DoubleCoverRationalityCriteria}\eqref{item:0ovals-criterion}--\eqref{item:2nonnested-criterion} is Proposition~\ref{prop:negative-outside-IJT}, which shows that under an assumption on the sign of the equation \(Q_1Q_3-Q_2^2\) defining \(\Delta\), the intermediate Jacobian torsor obstruction characterizes rationality.

Before proving Proposition~\ref{prop:negative-outside-IJT}, we first show Theorem~\ref{thm:DoubleCoverRationalityCriteria}\eqref{item:3ovals-rationality-criterion}--\eqref{item:4ovals-rational}. Namely, we show that \(Y_{\Deltatilde/\Delta}\) is rational if \(\Delta(\mathbb R)\) is four ovals, and that if \(\Delta(\mathbb R)\) is three ovals then topological criterion that \(Y_{\Deltatilde/\Delta}(\mathbb R)\) is connected is sufficient to guarantee rationality.

\begin{prop}[Theorem~\ref{thm:DoubleCoverRationalityCriteria}\eqref{item:3ovals-rationality-criterion}--\eqref{item:4ovals-rational}]\label{prop:4ovals_rational}
Let \(Y:= Y_{\Deltatilde/\Delta}\to\mathbb P^1\times\mathbb P^2\) be a double cover constructed as in Section~\ref{sec:DoubleCover}. Assume that either
\begin{enumerate}
    \item The real isotopy class of \(\Delta\) is four ovals, or
    \item The real isotopy class of \(\Delta\) is three ovals and \(Y(\mathbb R)\) is connected.
\end{enumerate}
Then the quadric surface bundle \(Y\to\mathbb P^1\) admits a section (over \(\mathbb R\)). In particular, \(Y\) is rational (over \(\mathbb R\)).
\end{prop}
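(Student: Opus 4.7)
The plan is to apply Witt's theorem (Theorem~\ref{thm:witt}) to the quadric surface bundle \(\pi_1\colon Y\to\mathbb P^1\): a section over \(\mathbb R\) exists if and only if \(\pi_1(\mathbb R)\colon Y(\mathbb R)\to\mathbb P^1(\mathbb R)\) is surjective. Writing \(M(t):=t_0^2M_1+2t_0t_1M_2+t_1^2M_3\) for the symmetric \(3\times 3\) matrix associated to \(Q_t\), the fiber \(Y_{[t_0:t_1]}\colon z^2=Q_t(u,v,w)\) has \(4\times 4\) matrix \(\mathrm{diag}(M(t),-1)\); the \(-1\) block rules out positive-definiteness, so the fiber has a real point if and only if \(M(t)\) is not negative definite. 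We argue by contradiction, supposing that \(M(t_*)\) is negative definite for some \(t_*\in\mathbb P^1(\mathbb R)\).

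Using the \(\mathrm{GL}_2\)-freedom on \(\mathbb P^1\) (Theorem~2.6 of \cite{FJSVV}), a coordinate change places \(t_*=[1:0]\), making \(Q_1\) itself negative definite. Then \((Q_1\ge 0)_{\mathbb R}=\emptyset\), and Lemma~\ref{lem:Impi2} identifies the image of \(\pi_2(\mathbb R)\) with \((f\le 0)_{\mathbb R}\), where \(f=Q_1Q_3-Q_2^2\). Since the sign of \(f\) is constant on each connected component of \(\mathbb P^2(\mathbb R)\setminus\Delta(\mathbb R)\) and alternates across each oval, for \(\Delta\) with \(n\) non-nested ovals (\(n=3\) or \(4\)) the set \((f\le 0)_{\mathbb R}\) is either the closure of the \(n\) disc-interiors (having \(n\) connected components, when \(f<0\) inside the ovals) or the closure of the non-orientable outside region \(U_\Delta\) (having one component, when \(f<0\) outside).

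In the first possibility, Lemma~\ref{lem:ImComponents} forces \(Y(\mathbb R)\) to have exactly \(n\) connected components. For \(n=4\) this contradicts the upper bound of Lemma~\ref{lem:Yleq3components}; for \(n=3\) it contradicts the connectedness hypothesis on \(Y(\mathbb R)\). This disposes of both statements of the proposition except in the remaining configuration where \(f<0\) outside \(\Delta\), in which the image is connected and component counting alone is inconclusive.

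The main obstacle is precisely this remaining configuration. To dispose of it we would exploit the associated degree two del Pezzo surface \(S\colon z^2=-f\), birational to the branch locus of \(Y\to\mathbb P^1\times\mathbb P^2\). The sign condition \(f<0\) outside \(\Delta\) is exactly Comessatti's classical criterion for every real bitangent of \(\Delta\) to split into a pair of real \((-1)\)-curves on \(S\); in particular, all \(28\) real bitangents in the four ovals case, or the \(16\) in the three ovals case together with the connectedness hypothesis, produce many real \((-1)\)-curves. Translating these via the birational correspondence \(S\dashrightarrow B\subset Y\) and combining with the intermediate Jacobian torsor analysis developed in the remainder of Section~\ref{sec:dP2} should produce either a real section of \(\pi_1\) directly, or a real point of \(\Ptildem{1}\) from which a section follows via Proposition~\ref{prop:negative-outside-IJT}. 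Either outcome contradicts \(M(t_*)\) being negative definite, so \(\pi_1(\mathbb R)\) is surjective, Witt's theorem yields the required section of \(\pi_1\), and \(Y\) is rational.
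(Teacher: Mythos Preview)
Your contradiction argument is sound through the ``$f<0$ inside the ovals'' case, but the ``$f<0$ outside'' case is only a sketch, and the mechanism you propose there does not close the gap. Invoking Proposition~\ref{prop:negative-outside-IJT} requires knowing that the IJT obstruction vanishes, which you have not established; and ``a real point of $\Ptildem{1}$'' does not fall out of the existence of real $(-1)$-curves on the del Pezzo surface without further argument. The vague phrase ``should produce either a real section of $\pi_1$ directly'' is exactly the content that needs proof. The issue is that a real $(-1)$-curve on $W$ need not be a section of $\pi_1'\colon W\to\mathbb P^1$: some are components of singular fibers and some are degree-two multisections.

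The paper fills this gap with a structural result you are missing (Proposition~\ref{prop:W-lines}): of the $56$ geometric lines on $W$, exactly $12$ are components of singular fibers of $\pi_1'$, $12$ are bisections, and the remaining $32$ are honest sections; moreover the first two sets contain the same even number of real lines. Combined with Comessatti's splitting criterion (Lemma~\ref{lem:outside-bitangent-split}), in the four-ovals case all $56$ lines are real, and in the three-ovals case with $U_\Delta=(f<0)_{\mathbb R}$ there are $32$ real lines; a count then forces at least one real line to lie in the set of $32$ sections, yielding a real section of $\pi_1$ directly. No appeal to Witt or to the IJT torsors is needed. Note also that the paper does not argue by contradiction: it proves Lemma~\ref{lem:4ovals-outside} (four ovals forces $U_\Delta=(f<0)_{\mathbb R}$) separately, and in the three-ovals ``$f<0$ inside'' case it shows directly that connectedness of $Y(\mathbb R)$ forces $\Deltatilde(\mathbb R)\neq\emptyset$, giving a section via Proposition~\ref{prop:Yproperties}\eqref{item:Deltatilde-point-section}. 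Your Witt-by-contradiction framing could be made to work, but only after supplying the line-decomposition argument above; as written, the crucial case is not proved.
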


To prove Proposition~\ref{prop:4ovals_rational}, we first study an associated degree two del Pezzo surface and use the geometry of this surface to construct certain geometric sections of the quadric surface bundle \(\pi_2 \colon Y \to \mathbb P^1\) in Section~\ref{sec:dP2-lines}.  In Section~\ref{sec:UDelta} we show that under the assumptions of Proposition~\ref{prop:4ovals_rational}, such a section exists over \(\mathbb R\).

Then, in Section~\ref{sec:IJT-sufficient}, to prove Proposition~\ref{prop:negative-outside-IJT} we show that whenever this degree two del Pezzo surface contains real lines, the IJT obstruction characterizes rationality of \(Y\). This condition that the surface contains real lines is determined by the sign of \(Q_1Q_3-Q_2^2\).

Throughout, we let \(Q_1,Q_2,Q_3\in\mathbb R[u,v,w]\) be quadratic forms defining an \'etale double cover of a smooth quartic as in Theorem~\ref{thm:bruin}, and \(Y:= Y_{\Deltatilde/\Delta}\) the associated double cover of \(\mathbb P^1\times\mathbb P^2\) defined in Section~\ref{sec:DoubleCover}.

\subsection{Lines on the associated degree \(2\) del Pezzo surface}\label{sec:dP2-lines}
The results in this section are based on ideas joint with S. Frei, S. Sankar, B. Viray, and I. Vogt. In particular, Proposition~\ref{prop:W-lines} and the idea to use lines on the associated degree \(2\) del Pezzo surface to construct sections of the quadric surface bundle were obtained during the preparation of \cite{FJSVV}.

Recall from Section~\ref{sec:DoubleCover} that the double cover \(Y := Y_{\Deltatilde/\Delta} \to\mathbb P^1_{[t_0:t_1]}\times\mathbb P^2_{[u:v:w]}\) is defined by the equation \(z^2=t_0^2 Q_1(u,v,w)+2t_0t_1 Q_2(u,v,w)+t_1^2 Q_3(u,v,w)\). The branch locus is the \((2,2)\)-divisor \begin{equation}\label{eqn:dP2}W:= W_{\Deltatilde/\Delta}:= \left(t_0^2 Q_1(u,v,w)+2t_0t_1 Q_2(u,v,w)+t_1^2 Q_3(u,v,w)=0\right).\end{equation} Let \(\pi'_i\colon W\hookrightarrow\mathbb P^1\times\mathbb P^2\to\mathbb P^i\) denote the compositions of the inclusion with the projections.

The second projection \(\pi'_2\colon W\to\mathbb P^2\) is a double cover branched along the quartic curve \(\Delta\), so \(W\) is a del Pezzo surface of degree two. Thus, \(W_{\mathbb C}\) is isomorphic to the blow up of \(\mathbb P^2_{\mathbb C}\) at seven points \(P_1,\ldots,P_7\in\mathbb P^2(\mathbb C)\) in general position. We use this description to index the \(56\) (complex) lines (i.e.  genus \(0\) curves with self-intersection \(-1\)) of \(W_{\mathbb C}\),  which map to the 28 (complex) bitangents of \(\Delta\):
the exceptional divisors \(E_1,\ldots,E_7\) of the blow up;
the strict transforms \(L_{ij}\) of the line passing through \(P_i\) and \(P_j\) for \(i< j\);
the strict transforms \(Q_{ij}\) of the conic passing through the five points complementary to \(P_i,P_j\) for \(i< j\); and
the strict transforms \(C_i\) of the cubic passing through \(P_1,\ldots,P_7\) with multiplicity two at \(P_i\). (See e.g. \cite[Section 8.7]{Dolgachev-CAG}.)

Away from the conic defined by \(Q_1\), the double cover \(\pi'_2\colon W\to\mathbb P^2\) is locally given by the double cover
\begin{equation}\label{eq:localW}
( t Q_1 + Q_2)^2 =-(Q_1Q_3-Q_2^2).
\end{equation}

The first projection \(\pi'_1\colon W\to\mathbb P^1\) is a conic bundle whose discriminant divisor is equal to the branch locus \(-\det(t_0^2M_1+2t_0t_1M_2+t_1^2M_3)=0\) of the genus two curve \(\Gamma\) defined in Section~\ref{sec:DoubleCover}. Each singular fiber of \(\pi'_1\) is a rank \(2\) conic, so the components of the singular fibers of \(\pi'_1\) make up twelve of the lines on \(W_{\mathbb C}\).

The lines on \(W_{\mathbb C}\) come in pairs \((\ell,\ell')\) with \(\ell \cdot \ell' = 2\). Each pair of lines maps under \(\pi'_2\) to the same bitangent of \(\Delta\). In particular, \(\ell\) is defined over \(\mathbb R\) if and only if \(\ell'\) is. Using the above description of the lines after identifying \(W_{\mathbb C}\) with a blow up of \(\mathbb P^2\), the line pairs are \((E_i, C_i)\) and \((L_{ij}, Q_{ij})\).

\begin{prop}\label{prop:W-lines}
The 56 geometric lines on \(W_{\mathbb C}\) have the following decomposition into three sets:
\begin{enumerate}
\item\label{item:0-intersection}
12 are the geometric components of the six singular fibers of the conic bundle \(\pi_1'\);
\item\label{item:2-intersection}
12 give degree two geometric multisections of \(\pi_1'\); and
\item\label{item:1-intersection}
The remaining 32 give geometric sections of \(\pi'_1\), and hence give geometric sections of the quadric surface bundle \(\pi_1 \colon Y \to \mathbb P^1\).
\end{enumerate}
Moreover, the sets \eqref{item:0-intersection} and \eqref{item:2-intersection} each contain the same even number of lines defined over \(\mathbb R\).
\end{prop}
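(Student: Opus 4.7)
The strategy is to classify the $56$ geometric lines on $W_{\mathbb{C}}$ by intersecting them with a fiber $F$ of the conic bundle $\pi'_1$, combined with the pairing structure coming from the double cover $\pi'_2\colon W\to\mathbb{P}^2$ (each bitangent $L$ of $\Delta$ pulls back to $(\pi'_2)^{-1}(L)=\ell+\ell'$, a pair of lines with $\ell\cdot\ell'=2$). Two numerical inputs drive everything. First, since $\Delta$ is smooth the discriminant $-\det(t_0^2M_1+2t_0t_1M_2+t_1^2M_3)$ has six distinct roots, so $\pi'_1$ has exactly six singular fibers, each a union of two distinct lines meeting at a node; these give the twelve lines of~\eqref{item:0-intersection}, all satisfying $\ell\cdot F=0$ by the standard computation $\ell\cdot F=\ell\cdot(\ell+\ell_*)=-1+1$. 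Second, adjunction for the double cover $\pi'_2$ branched along a smooth plane quartic gives $-K_W=(\pi'_2)^*\mathcal{O}_{\mathbb{P}^2}(1)$, so $-K_W\cdot F=2$, and for any bitangent $L$ the paired lines satisfy
\[
\ell\cdot F+\ell'\cdot F \;=\; (\pi'_2)^*L\cdot F \;=\; -K_W\cdot F \;=\; 2.
\]

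With these in hand, the remaining $44$ lines can be sorted. The twelve singular fiber components of~\eqref{item:0-intersection} map under $\pi'_2$ to twelve \emph{distinct} bitangents of $\Delta$: no line in $\mathbb{P}^2$ can be a component of two different members of the pencil $t_0^2Q_1+2t_0t_1Q_2+t_1^2Q_3$, since otherwise it would be a common factor of $Q_1,Q_2,Q_3$ and then $\Delta=(Q_1Q_3-Q_2^2=0)$ would be singular. For each of these twelve bitangents the displayed identity forces the partner line $\ell'$ to satisfy $\ell'\cdot F=2$, hence to be an honest bisection of $\pi'_1$ (it cannot lie in a fiber, as that would force $\ell'\cdot F=0$), yielding~\eqref{item:2-intersection}. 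The remaining $28-12=16$ bitangents are not components of any singular fiber, so for each the paired sheets $\ell,\ell'$ both have $\ell\cdot F\ge 1$, and the identity forces $\ell\cdot F=\ell'\cdot F=1$; thus both are sections of $\pi'_1$. Composing with the inclusion of the ramification locus $W\hookrightarrow Y$ of $\piDoubleCover$ turns each into a geometric section of $\pi_1\colon Y\to\mathbb{P}^1$, establishing~\eqref{item:1-intersection}. The totals check: $12+12+32=56$.

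For the moreover statement, the pairing $\ell\leftrightarrow\ell'$ is Galois-equivariant because it is defined purely via the $\mathbb{R}$-morphism $\pi'_2$, and by the classification it restricts to a bijection between~\eqref{item:0-intersection} and~\eqref{item:2-intersection}; consequently these sets contain the same number of real lines. Evenness of this common count follows by inspecting the real structure of each singular fiber: a real singular fiber of $\pi'_1$ is a rank-$2$ real plane conic whose two components are either both real (indefinite signature $(1,1)$) or form a complex conjugate pair (definite signature), contributing $2$ or $0$ real lines in~\eqref{item:0-intersection} respectively; singular fibers over complex conjugate points of $\mathbb{P}^1$ contribute none. The main non-routine input is the pencil-of-conics argument ensuring that the twelve singular fiber components arise from twelve genuinely distinct bitangents (and, a fortiori, that the twelve bisection partners are distinct from the fiber components and from each other); the remainder of the argument is intersection-theoretic bookkeeping.
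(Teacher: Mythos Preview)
Your argument is correct and takes a more conceptual route than the paper's. The paper normalizes via the $W(E_7)$ action on the lines so that one singular fiber of $\pi'_1$ is $E_1+L_{12}$, and then explicitly computes the intersection number of each of the 56 named lines $E_i,L_{ij},Q_{ij},C_i$ with $E_1+L_{12}$, reading off the three sets from the resulting list; the pairing statement in the ``moreover'' clause is then checked by inspection of that list. Your approach instead uses the single identity $\ell\cdot F+\ell'\cdot F=-K_W\cdot F=2$ for each bitangent pair, which immediately forces the trichotomy $(0,2),(1,1),(2,0)$ and makes the bijection between \eqref{item:0-intersection} and \eqref{item:2-intersection} transparent. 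The paper's enumeration is entirely mechanical and leaves no room for subtlety; your argument is cleaner and would adapt to other conic-bundle structures on degree~$2$ del Pezzos without redoing any bookkeeping.

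One small repair: the claim that a line $L$ appearing in two members $C_a,C_b$ of the pencil must divide all of $Q_1,Q_2,Q_3$ is not literally true, because the family $t_0^2Q_1+2t_0t_1Q_2+t_1^2Q_3$ traces out a conic, not a line, in the net $\langle Q_1,Q_2,Q_3\rangle$. The two conditions $L\mid C_a$ and $L\mid C_b$ only force $Q_i|_L=c_i\,q$ for a common binary quadratic $q$ and constants $c_i$; but then every zero of $q$ on $L$ is a common zero of $Q_1,Q_2,Q_3$, at which $\nabla(Q_1Q_3-Q_2^2)$ vanishes, so $\Delta$ is singular as you want. Alternatively you can avoid this step altogether: since a fiber component has $\ell\cdot F=0$ and its partner has $\ell'\cdot F=2$, no two fiber components are ever paired, which is exactly the assertion that they lie over twelve distinct bitangents.
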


\begin{proof}
Throughout this proof, we work over \(\mathbb C\). The result will be proven by computing intersection numbers.
By the action of the Weyl group \(W(E_7)\) on the lines of the del Pezzo surface, we may assume that one of the singular fibers of \(\pi'_1\) is \(E_1+L_{12}\) \cite[Section 5]{FriedmanMorgan}. Then one computes:
\begin{enumerate}
    \item\label{item:bad-fibers} The lines \(E_1, \{E_i\mid i\geq 3\}, L_{12}, \{L_{2j} \mid j\geq 3\}\) each have intersection \(0\) with \(E_1+L_{12}\). These are the twelve components of the singular fibers of \(\pi'_1\).
    \item The lines \(C_1,\{C_i \mid i\geq 3\}, Q_{12},\{Q_{2j} \mid j\geq 3\}\) have intersection \(2\) with \(E_1+L_{12}\).
    \item The lines \(\{L_{1j} \mid j\geq 3\}, \{Q_{ij} \mid i\geq 3\}, C_2\) have intersection \(1\) with \(E_1\) and intersection \(0\) with \(L_{12}\). The lines \(\{Q_{1j} \mid j\geq 3\}, \{L_{ij} \mid i\geq 3\}, E_2\) have intersection \(0\) with \(E_1\) and intersection \(1\) with \(L_{12}\). Together, these \(32\) lines give sections of the conic bundle \(\pi'_1\colon W_{\mathbb C}\to\mathbb P^1_{\mathbb C}\).
\end{enumerate}
Since \(W\) is the branch locus of \(\piDoubleCover\colon Y\to\mathbb P^1\times\mathbb P^2\), the preimage in \(Y\) of any of the 32 lines giving sections of \(\pi_1' \colon W \to \mathbb P^1\) is a section of \(\pi_1\).
From the explicit description, we see that each line in \eqref{item:0-intersection} is paired with a line in  \eqref{item:2-intersection}, so these sets contain the same number of lines defined over \(\mathbb R\). Moreover, since the members of \eqref{item:0-intersection} are components of singular fibers of the conic bundle \(\pi'_1\), this number must be even.

\end{proof}

\subsection{Splitting of real bitangents in the del Pezzo surface}\label{sec:UDelta} In this section we show that when \(\Delta(\mathbb R)\) consists of four ovals, each real bitangent of \(\Delta\) splits into two real lines on \(W\).  Furthermore, when \(\Delta(\mathbb R)\) is three ovals and \(Y\) has connected real locus, we show that either all of the real bitangents of \(\Delta\) split into two real lines on \(W\), or \(\Deltatilde(\mathbb R) \neq \emptyset\).  Combining this with Proposition~\ref{prop:W-lines} and Proposition~\ref{prop:Yproperties}\eqref{item:Deltatilde-point-section} yields the rationality construction.

Recall from Notation~\ref{notation:ovals} that the real components of the even degree plane curve \(\Delta\) are all ovals, and the complement of \(\Delta(\mathbb R)\) in \(\mathbb P^2(\mathbb R)\) is a the disjoint union of a non-orientable set \(U_\Delta\) and a finite number of discs. If none of the ovals of \(\Delta\) are nested, then the set \(U_\Delta\) is either equal to \((Q_1Q_3-Q_2^2<0)_{\mathbb R}\) or \((Q_1Q_3-Q_2^2>0)_{\mathbb R}\). Since \(\Delta\) is a smooth quartic, the only case for which nesting occurs is two nested ovals. In this case, either \((Q_1Q_3-Q_2^2<0)_{\mathbb R}\) or \((Q_1Q_3-Q_2^2>0)_{\mathbb R}\) is disconnected, and the disconnected set is the disjoint union of \(U_\Delta\) and a disc.

\begin{lem}[{\cite{ComessattiFondamentiPL}, \cite[Proof of Theorem 6.3]{Koll'ar-real-surfaces}}]\label{lem:outside-bitangent-split}
The real bitangents of \(\Delta\) are split in the del Pezzo surface \(W\) defined in \eqref{eqn:dP2} if and only if \(U_\Delta \subset (Q_1Q_3-Q_2^2<0)_{\mathbb R}\).
\end{lem}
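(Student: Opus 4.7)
My plan is to combine the explicit local description \eqref{eq:localW} of $W \to \mathbb P^2$ as a double cover $y^2 = -f$ (with $f := Q_1Q_3 - Q_2^2$) with a topological observation about real lines in $\mathbb P^2(\mathbb R)$. For any real bitangent $\ell$ of $\Delta$, the divisor $\ell \cdot \Delta$ on $\ell \cong \mathbb P^1$ has the form $2p + 2q$ or $4p$, so every zero of $(-f)|_\ell$ has even multiplicity. Hence $(-f)|_\ell = c \cdot g^2$ for some $c \in \mathbb R^\times$ and some degree-two $g \in \mathbb R[\ell]$, and the preimage $W \times_{\mathbb P^2} \ell$ decomposes as a union of two real lines---i.e., $\ell$ splits in $W$---precisely when $c > 0$, equivalently when $f|_\ell \leq 0$ on $\ell(\mathbb R)$.

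The topological input is that every real line $\ell(\mathbb R) \subset \mathbb P^2(\mathbb R) \cong \mathbb{RP}^2$ is non-contractible and therefore cannot lie inside any disc component of $\mathbb P^2(\mathbb R) \setminus \Delta(\mathbb R)$; hence $\ell(\mathbb R) \cap U_\Delta \neq \emptyset$. Since $U_\Delta$ is connected and disjoint from $(f=0)_{\mathbb R}$, the function $f$ has a well-defined constant sign on $U_\Delta$, and by the preceding paragraph it is exactly this sign that governs the sign of $c$ for every real bitangent simultaneously.

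Both directions of the lemma then follow. If $U_\Delta \subset (f < 0)_{\mathbb R}$, then for each real bitangent $\ell$ the value of $f$ at any point of $\ell(\mathbb R) \cap U_\Delta$ is negative, forcing $c > 0$ and $\ell$ to split. Conversely, $\Delta$ always carries at least one real bitangent (Zeuthen's table gives at least four, even in the pointless isotopy class), and if all real bitangents split then taking any one, applying $f|_\ell \leq 0$ on $\ell(\mathbb R)$, and evaluating at a point of $\ell(\mathbb R) \cap U_\Delta$---where $f$ is nonzero---pins the common sign of $f$ on $U_\Delta$ to be negative. The main technical point requiring care is the identity $(-f)|_\ell = c\, g^2$: since \eqref{eq:localW} only describes $W$ away from $(Q_1 = 0)_{\mathbb R}$, one should cover the remaining portion of $\ell$ using the symmetric chart where $Q_3 \neq 0$ (in which a completely analogous local equation holds), and observe that the factorization $y = \pm\sqrt{c}\,g$ on such charts globalizes to a splitting of $W \times_{\mathbb P^2} \ell$ over $\mathbb R$ iff $c > 0$, independent of the chart.
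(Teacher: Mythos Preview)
Your proof is correct and follows essentially the same approach as the paper's: both use the local equation \eqref{eq:localW} to reduce splitting of a real bitangent to the sign of $f = Q_1Q_3 - Q_2^2$ along it, and both invoke the non-orientability (equivalently, non-contractibility) of a real line in $\mathbb P^2(\mathbb R)$ to force every real bitangent to meet $U_\Delta$. The paper compresses your factorization argument into the single observation that the preimage of a bitangent is split if and only if it has a smooth real point; your version is simply more explicit, and your handling of the converse via Zeuthen's count of real bitangents makes that direction visible where the paper leaves it implicit. Your closing worry about patching with the chart $(Q_3 \neq 0)$ is unnecessary: since the two geometric components of $W \times_{\mathbb P^2} \ell$ are already lines, splitting over $\mathbb R$ is equivalent to the existence of a single smooth real point on the preimage, and such a point can always be chosen away from the finitely many points of $\ell \cap (Q_1 = 0)$.
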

\begin{proof}[Proof, c.f. {\cite[Proof of Theorem 6.3]{Koll'ar-real-surfaces}}]
A neighborhood of any line in \(\mathbb P^2\) is not orientable, so away from the points of tangency, each real bitangent of \(\Delta\) is contained in \(U_\Delta\). The preimage of a real bitangent is split in \(W\) if and only if it has a smooth real point. For a point \(P\in\mathbb P^2(\mathbb R)\setminus(Q_1=0)_{\mathbb R}\), we see from equation~\eqref{eq:localW} that the preimage of \(P\) under \(\pi_2'\) splits as two real points if and only if \((Q_1Q_3-Q_2^2)(P)\leq 0\).
\end{proof}

Before showing the results for three and four ovals, we note the following consequence of Lemma~\ref{lem:outside-bitangent-split}, which we will use in the proof of Proposition~\ref{prop:negative-outside-IJT}.

\begin{cor}\label{cor:no-branch-pts-section}
If \(U_\Delta \subset (Q_1Q_3-Q_2^2<0)_{\mathbb R}\) and \(\Gamma\) has no real Weierstrass points, then the quadric surface bundle \(\pi_1\) has a real section.
\end{cor}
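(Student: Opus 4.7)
The plan is to apply Witt's theorem (Theorem~\ref{thm:witt}) to the quadric surface bundle $\pi_1\colon Y\to\mathbb P^1$, which reduces the problem to showing that $\pi_1(\mathbb R)\colon Y(\mathbb R)\to\mathbb P^1(\mathbb R)$ is surjective. From the description in Section~\ref{sec:DoubleCover}, the fiber of $\pi_1$ over $[t_0:t_1]\in\mathbb P^1(\mathbb R)$ is the quadric in $\mathbb P^3$ defined by the symmetric matrix $\left(\begin{smallmatrix} M(t_0,t_1) & 0 \\ 0 & -1 \end{smallmatrix}\right)$, where $M(t_0,t_1):=t_0^2M_1+2t_0t_1M_2+t_1^2M_3$. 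Since this $4\times 4$ matrix always has a negative eigenvalue coming from the $-1$ block, the fiber contains a real point if and only if $M(t_0,t_1)$ fails to be negative definite.

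I would then invoke the signature argument recalled in Section~\ref{sec:DoubleCover}: the signature of $M(t_0,t_1)$ on $\mathbb P^1(\mathbb R)$ can only change at real roots of $\det M(t_0,t_1)$, which are precisely the images in $\mathbb P^1(\mathbb R)$ of the real Weierstrass points of the genus two curve $\Gamma\colon y^2=-\det M(t_0,t_1)$ from Proposition~\ref{prop:Yproperties}\eqref{item:Gamma-curve}. Under the hypothesis that $\Gamma$ has no real Weierstrass points, $M(t_0,t_1)$ is therefore non-degenerate and of constant signature on all of $\mathbb P^1(\mathbb R)$. Consequently either every real fiber of $\pi_1$ has a real point, in which case Witt's theorem produces the desired real section, or no real fiber has a real point.

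To rule out the latter scenario, I would argue by contradiction: assume $M(t_0,t_1)$ is negative definite for every $(t_0,t_1)\in\mathbb R^2\setminus\{0\}$. For any fixed $(u,v,w)\in\mathbb R^3\setminus\{0\}$, the binary quadratic form
\[
Q_1(u,v,w)\,t_0^2+2Q_2(u,v,w)\,t_0t_1+Q_3(u,v,w)\,t_1^2
\]
in $(t_0,t_1)$ would then be negative definite, so its discriminant $4(Q_2^2-Q_1Q_3)(u,v,w)$ is negative, yielding $(Q_1Q_3-Q_2^2)(u,v,w)>0$. This would force $Q_1Q_3-Q_2^2>0$ on all of $\mathbb P^2(\mathbb R)$, contradicting the hypothesis $U_\Delta\subset(Q_1Q_3-Q_2^2<0)_{\mathbb R}$ (recall that $U_\Delta$ is always non-empty, since $\mathbb P^2(\mathbb R)$ is non-orientable while the non-$U_\Delta$ components of the complement of $\Delta(\mathbb R)$ are all discs).

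The only step requiring care is the correct packaging of signature constancy together with the equivalence between negative definiteness of $M(t_0,t_1)$ and emptiness of the real fiber; once this dictionary is set up, the contradiction via the discriminant of the binary form is immediate, and Witt's theorem delivers the section.
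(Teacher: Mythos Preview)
Your proof is correct, but it takes a genuinely different route from the paper's. The paper argues via the degree two del Pezzo surface \(W\) (the branch locus of \(Y\to\mathbb P^1\times\mathbb P^2\)): the hypothesis \(U_\Delta\subset(Q_1Q_3-Q_2^2<0)_{\mathbb R}\) forces, by Lemma~\ref{lem:outside-bitangent-split}, the real bitangents of \(\Delta\) to split in \(W\), so \(W\) contains real lines; and the hypothesis that \(\Gamma\) has no real Weierstrass points means the conic bundle \(\pi_1'\colon W\to\mathbb P^1\) has no real singular fibers, so none of the real lines on \(W\) lie in set~\eqref{item:0-intersection} (or, by pairing, set~\eqref{item:2-intersection}) of Proposition~\ref{prop:W-lines}. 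Hence every real line on \(W\) gives a real section of \(\pi_1'\), and thus of \(\pi_1\). Your argument bypasses the del Pezzo surface entirely, using only signature constancy and the discriminant of the binary form to feed into Witt's theorem. Your approach is more elementary and self-contained; the paper's approach produces an \emph{explicit} section (a line on \(W\)) and fits into the broader narrative of Section~\ref{sec:dP2}, where the splitting of lines on \(W\) is the organizing principle behind the rationality constructions.
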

\begin{proof}
The branch locus of \(\Gamma\to\mathbb P^1\) is the discriminant locus of the conic bundle \(\pi'_1\colon W\to\mathbb P^1\), so the assumption that \(\Gamma\) has no real Weierstrass points implies no component of a singular fiber of \(\pi'_1\) is defined over \(\mathbb R\).
\(W\) contains real lines by Lemma~\ref{lem:outside-bitangent-split}, so by Proposition~\ref{prop:W-lines} they all give sections of \(\pi'_1\) and hence \(\pi_1\).
\end{proof}

\begin{lem}\label{lem:4ovals-outside}
If the real isotopy class of \(\Delta\) is four ovals, then \(U_\Delta=(Q_1Q_3-Q_2^2<0)_{\mathbb R}\).
\end{lem}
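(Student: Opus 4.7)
The plan is to argue by contradiction: assume $U_\Delta=(f>0)_\mathbb R$ where $f:=Q_1Q_3-Q_2^2$, and derive a contradiction with the three-component bound of Lemma~\ref{lem:Yleq3components}, applied either to $Y$ itself or to the twist $Y_{\Deltabar/\Delta}$. First I would set up the dichotomy: since $\Delta$ is a smooth quartic with four (necessarily non-nested) ovals, $\mathbb P^2(\mathbb R)\setminus\Delta(\mathbb R)=U_\Delta\sqcup D_1\sqcup\cdots\sqcup D_4$ with each $D_i$ an open disc. Smoothness of $\Delta$ means $f$ vanishes transversely along $\Delta$, so $f$ alternates sign across each oval; hence $U_\Delta$ and the four discs lie in opposite sign classes, and the only two possibilities are $U_\Delta=(f<0)_\mathbb R$ (as desired) or $U_\Delta=(f>0)_\mathbb R$ (in which case $(f\leq 0)_\mathbb R=\bigsqcup_{i=1}^{4}\bar D_i$).

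Assume for contradiction the second possibility. On $U_\Delta$ the symmetric matrix $M=\left(\begin{smallmatrix}Q_1&Q_2\\Q_2&Q_3\end{smallmatrix}\right)$ satisfies $\det M=f>0$, so $M$ is definite. In particular $Q_1\neq 0$ on $U_\Delta$, since $Q_1(P)=0$ would give $f(P)=-Q_2(P)^2\leq 0$. Combined with connectedness of $U_\Delta$, this pins $Q_1$ to a single nonzero sign on $U_\Delta$.

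If $Q_1<0$ on $U_\Delta$, then $(Q_1\geq 0)_\mathbb R$ is disjoint from $U_\Delta$ and so is contained in $\bigsqcup_{i=1}^{4}\bar D_i=\mathbb P^2(\mathbb R)\setminus U_\Delta$. Combined with $(f\leq 0)_\mathbb R=\bigsqcup\bar D_i$, Lemma~\ref{lem:Impi2} yields
\[
\mathrm{Im}(\pi_2(\mathbb R))=(Q_1\geq 0)_\mathbb R\cup(f\leq 0)_\mathbb R=\bigsqcup_{i=1}^{4}\bar D_i,
\]
a disjoint union of four closed discs. By Lemma~\ref{lem:ImComponents}, $Y(\mathbb R)$ would then have four real connected components, contradicting Lemma~\ref{lem:Yleq3components}. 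If instead $Q_1>0$ on $U_\Delta$, I would apply this same argument to the twist $Y_{\Deltabar/\Delta}$ of Definition~\ref{defn:TwistedDoubleCover}: replacing $(Q_1,Q_3)$ by $(-Q_1,-Q_3)$ leaves $f$ unchanged but the relevant linear form becomes $-Q_1<0$ on $U_\Delta$, so the first step applied to $Y_{\Deltabar/\Delta}$ forces four real components on the twist, again contradicting Lemma~\ref{lem:Yleq3components} (which applies to $Y_{\Deltabar/\Delta}$ by the discussion after Definition~\ref{defn:TwistedDoubleCover}).

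The key technical point is the observation that definiteness of $M$ on $U_\Delta$ pins down the sign of $Q_1$ there, which lets either the image of $\pi_2(\mathbb R)$ or of its twisted analog be squeezed into the four closed discs $\bar D_i$; the cap of three real components coming from the six Weierstrass points of $\Gamma$ then supplies the contradiction.
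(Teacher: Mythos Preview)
Your argument is correct and follows essentially the same strategy as the paper: derive a contradiction with the three-component bound of Lemma~\ref{lem:Yleq3components} applied to either $Y_{\Deltatilde/\Delta}$ or the twist $Y_{\Deltabar/\Delta}$, using Lemma~\ref{lem:Impi2} to identify the image of $\pi_2(\mathbb R)$. The only difference is organizational: the paper splits into three cases according to whether $Q_1$ is positive definite, negative definite, or indefinite, whereas you first observe that $f>0$ on $U_\Delta$ forces $Q_1\neq 0$ there (since $Q_1(P)=0$ gives $f(P)=-Q_2(P)^2\leq 0$) and then split only by the sign of $Q_1$ on the connected set $U_\Delta$, which is a mild streamlining of the same idea.
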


\begin{proof}
Since \(\Delta(\mathbb R)\) is four ovals, the locus \((Q_1Q_3-Q_2^2\leq 0)_{\mathbb R}\) is either connected or consists of four connected components. We will show that \((Q_1Q_3-Q_2^2\leq 0)_{\mathbb R}\) is connected, and hence is equal to \(U_\Delta\).  The key input is Lemma~\ref{lem:Yleq3components}, which guarantees that the images of \(Y_{\Deltatilde/\Delta}(\mathbb R)\) and \(Y_{\Deltabar/\Delta}(\mathbb R)\) in \(\mathbb P^2(\mathbb R)\) can have at most three connected components.

First assume that \(Q_1\) is negative definite. By Lemma~\ref{lem:Impi2} the image of \(Y_{\Deltatilde/\Delta}(\mathbb R)\) under \(\pi_2(\mathbb R)\) is the locus \((Q_1Q_3-Q_2^2\leq 0)_{\mathbb R}\); hence, using Lemma~\ref{lem:Yleq3components}, \((Q_1Q_3-Q_2^2\leq 0)_{\mathbb R}\) cannot have four connected components and so must be connected. 
The same argument using the twisted double cover \(Y_{\Deltabar/\Delta}\) shows that if \(Q_1\) is positive definite, then \((Q_1Q_3-Q_2^2\leq 0)_{\mathbb R}\) is connected.
So we may reduce to the case that \(Q_1\) is indefinite.

Suppose for contradiction that \((Q_1Q_3-Q_2^2\leq 0)_{\mathbb R}\) is not connected, and hence consists of four connected components. Since \((Q_1=0)_{\mathbb R}\) is contained in the closed disc defined by one of these connected components, then either the image 
\( (Q_1 \geq 0)_{\mathbb R} \cup (Q_1Q_3-Q_2^2 \leq 0)_{\mathbb R}\)
of \(Y_{\Deltatilde/\Delta}(\mathbb R)\), or the image
\( (-Q_1 \geq 0)_{\mathbb R} \cup (Q_1Q_3-Q_2^2 \leq 0)_{\mathbb R}\)
of \(Y_{\Deltabar/\Delta}(\mathbb R)\)
has four connected components. This contradicts Lemma~\ref{lem:Yleq3components}.
\end{proof}

\begin{cor}\label{cor:4ovals_real_lines}
If \(\Delta\) has real isotopy class four ovals, then all 56 lines on \(W_{\mathbb C}\) are defined over \(\mathbb R\). If \(\Delta\) has real isotopy class three ovals and if \(U_\Delta=(Q_1Q_3-Q_2^2<0)_{\mathbb R}\), then 32 of the lines on \(W_{\mathbb C}\) are defined over \(\mathbb R\); in particular, there is a section of \(\pi_1\) defined over \(\mathbb R\).
\end{cor}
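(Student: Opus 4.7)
The plan is to combine Zeuthen's count of real bitangents with the splitting criterion of Lemma~\ref{lem:outside-bitangent-split} and the line decomposition of Proposition~\ref{prop:W-lines}. The guiding observation is that the 56 geometric lines on \(W_\mathbb C\) come in 28 pairs mapping to the 28 complex bitangents of \(\Delta\), with the two lines in each pair sharing their field of definition; consequently, each real bitangent of \(\Delta\) that splits in \(W\) contributes exactly two real lines, and every line on \(W_\mathbb C\) arises in this way.

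For the four ovals case, I would first invoke Lemma~\ref{lem:4ovals-outside} to obtain the sign condition \(U_\Delta = (Q_1Q_3 - Q_2^2 < 0)_\mathbb R\). By Lemma~\ref{lem:outside-bitangent-split}, each of the 28 real bitangents then splits in \(W\), producing \(28 \cdot 2 = 56\) real lines, which exhausts all of \(W_\mathbb C\).

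For the three ovals case, the sign condition is assumed directly, so Lemma~\ref{lem:outside-bitangent-split} again yields that all 16 real bitangents split, giving 32 real lines on \(W\). To extract a real section of \(\pi_1\), I would distribute these 32 real lines among the three sets of Proposition~\ref{prop:W-lines}. By the last sentence of that proposition, the first set (the 12 components of singular fibers) and the second set (the 12 degree-two multisections) contain the same even number \(2k\) of real lines, with \(2k \leq 12\); hence at least \(32 - 4k \geq 8\) of the 32 real lines fall in the third set, each giving a real section of \(\pi_1' \colon W \to \mathbb P^1\) that lifts to a real section of \(\pi_1 \colon Y \to \mathbb P^1\) by Proposition~\ref{prop:W-lines}\eqref{item:1-intersection}.

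I do not anticipate a serious obstacle: the only substantive verification is the counting in the three ovals step, where the bound \(2k \leq 12\) (forced by the size of set~(1)) is exactly what ensures \(32 - 4k \geq 8 > 0\), so the existence of a real line in set~(3), and hence of a real section, follows at once.
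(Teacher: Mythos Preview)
Your proposal is correct and follows essentially the same approach as the paper: invoke Lemma~\ref{lem:4ovals-outside} (respectively the hypothesis) to get the sign condition, apply Lemma~\ref{lem:outside-bitangent-split} to split all real bitangents, and use Zeuthen's count of 28 (respectively 16) real bitangents. The paper's proof is terser and leaves the ``in particular'' clause implicit, whereas you spell out the pigeonhole argument with Proposition~\ref{prop:W-lines}; this is exactly the argument the paper deploys a few lines later in the proof of Proposition~\ref{prop:4ovals_rational}, so there is no substantive difference.
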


\begin{proof}
This follows from Lemmas~\ref{lem:outside-bitangent-split} and \ref{lem:4ovals-outside}, since in the four ovals case all \(28\) bitangents of \(\Delta\) are real, and in the three ovals case \(16\) of the bitangents are real.
\end{proof}

\begin{rem}
Corollary~\ref{cor:4ovals_real_lines} follows from results in the literature after identifying the del Pezzo surface \(W\) with one of the two possible real double covers of \(\mathbb P^2\) branched over the real quartic \(\Delta\).  Namely, if \(\Delta\) is defined by the real equation \(f(u,v,w) = 0\) with \((f < 0)_{\mathbb R} = U_\Delta\), then the two possible double covers are \(F_\Delta^+ = (t^2=f(u,v,w))\subset\mathbb P^3(1,1,1,2)\) and \(F_\Delta^- = (t^2=-f(u,v,w))\subset\mathbb P^3(1,1,1,2)\), as in \cite{Koll'ar-real-surfaces}.  By \cite{ComessattiFondamentiPL}, the preimages of the real bitangents of \(\Delta\) split in \(F_\Delta^-\) (see also \cite[Section 6]{Koll'ar-real-surfaces}).
\end{rem}

\begin{proof}[Proof of Proposition~\ref{prop:4ovals_rational}]
First suppose \(\Delta(\mathbb R)\) is four ovals. Then the conic bundle \(\pi_1'\colon W\to\mathbb P^1\), and hence the quadric surface bundle \(\pi_1\colon Y\to\mathbb P^1\), has a real section by Proposition~\ref{prop:W-lines} and Corollary~\ref{cor:4ovals_real_lines}. In particular \(Y\) is rational over \(\mathbb R\) by Proposition~\ref{prop:Yproperties}\eqref{item:Yquadricsurfacebundle}.

Now assume \(\Delta(\mathbb R)\) is three ovals and \(Y(\mathbb R)\) is connected. If \(U_\Delta=(Q_1Q_3-Q_2^2<0)_{\mathbb R}\), then Proposition~\ref{prop:W-lines} and Corollary~\ref{cor:4ovals_real_lines} imply that the conic bundle \(\pi_1'\colon W\to\mathbb P^1\), and hence the quadric surface bundle \(\pi_1\colon Y\to\mathbb P^1\), has a real section. So we may suppose that \(U_\Delta\) is \((Q_1Q_3-Q_2^2>0)_{\mathbb R}\), which implies that \((Q_1Q_3-Q_2^2\leq 0)_{\mathbb R}\) is disconnected. By Lemma~\ref{lem:Yleq3components}, the locus \((Q_1\geq 0)_{\mathbb R}\cup(Q_1Q_3-Q_2^2\leq 0)_{\mathbb R}\) is connected. Hence \((Q_1< 0)_{\mathbb R}\) must be contained in one of the discs of \(\mathbb P^2(\mathbb R)\setminus\Delta(\mathbb R)\). In particular \((Q_1>0)_{\mathbb R}\cap(Q_1Q_3-Q_2^2=0)_{\mathbb R}\neq\emptyset\), so \(\Deltatilde(\mathbb R)\neq\emptyset\).  By Proposition~\ref{prop:Yproperties}\eqref{item:Deltatilde-point-section} \(\pi_1 \colon Y \to \mathbb P^1\) has a real section and hence \(Y\) is rational.
\end{proof}

\subsection{Sufficiency of the intermediate Jacobian torsor obstruction when \(Q_1Q_3-Q_2^2<0\) outside \(\Delta\)}\label{sec:IJT-sufficient}
In this section, we will use the del Pezzo surface considered in the preceding sections to prove that if \(Q_1Q_3-Q_2^2\) is negative outside of the ovals of \(\Delta\), then the IJT obstruction characterizes rationality. The main result is Proposition~\ref{prop:negative-outside-IJT}, and we will obtain Theorem~\ref{thm:DoubleCoverRationalityCriteria}\eqref{item:0ovals-criterion}--\eqref{item:2nonnested-criterion} as corollaries.

Recall from Section~\ref{sec:IJT-background} that \cite{FJSVV} showed that the threefolds \(Y\) defined in Section~\ref{sec:DoubleCover} have four intermediate Jacobian torsors \(P\cong\bPic^0_\Gamma\), \(\Ptilde\), \(\Pm{1}\cong\bPic^1_\Gamma\), and \(\Ptildem{1}\), where \(\Gamma\) is the genus two curve associated to \(\Deltatilde\to\Delta\) in Proposition~\ref{prop:Yproperties}; and that \(\Ptilde+\Pm{1}=\Ptildem{1}\). Moreover, Lichtenbaum showed that \(\bPic^1_\Gamma(\bb R)\neq\emptyset\). Thus, the vanishing of the IJT obstruction is equivalent to the triviality of \(\Ptildem{1}\). In this case, \cite{FJSVV} proved that the existence of a point on the intermediate Jacobian torsor \(\Ptildem{1}\) yields a Galois-invariant geometric section of the quadric surface bundle \(\pi_1\). In general, this need not descend to a real section, since \(\Br\mathbb R\) is nontrivial. However, we will show that in the case \(Q_1Q_3-Q_2^2<0\) outside \(\Delta\) (which happens precisely when \(W\) contains real lines), we do in fact obtain a real section.

As in the previous sections \(U_\Delta\) denotes the outside of \(\Delta\) (Notation~\ref{notation:ovals}). Recall that by Lemma~\ref{lem:outside-bitangent-split}, the condition that \(U_\Delta\subset (Q_1Q_3-Q_2^2<0)_{\mathbb R}\) is equivalent to splitting of the real bitangents on the degree two del Pezzo surface \(W\) defined in \eqref{eqn:dP2}, which is the branch locus of \(Y\to\mathbb P^1\times\mathbb P^2\).

\begin{prop}\label{prop:negative-outside-IJT}
Assume \(U_\Delta\subset (Q_1Q_3-Q_2^2<0)_{\mathbb R}\). Then the quadric surface bundle \(\pi_1\colon Y\to\mathbb P^1\) has a real section if and only if the IJT obstruction vanishes for \(Y\).
\end{prop}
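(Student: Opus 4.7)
The reverse implication is immediate: a real section of $\pi_1$ makes $Y$ rational by Proposition~\ref{prop:Yproperties}\eqref{item:Yquadricsurfacebundle}, and $\mathbb R$-rationality forces the IJT obstruction to vanish.

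For the forward implication, suppose $\Ptildem{1}(\mathbb R) \neq \emptyset$. By \cite[Proposition 4.5]{FJSVV}, this produces a Galois-invariant \emph{geometric} section $\sigma_{\mathbb C}$ of the quadric surface bundle $\pi_1$, arising from a geometric section of the conic bundle $\pi_2^{-1}(\ell) \to \ell$ over some real line $\ell \subset \mathbb P^2$. In general $\sigma_{\mathbb C}$ need not descend, because the descent obstruction lies in $\Br(\mathbb R)[2]$. The goal is to show that under the hypothesis $U_\Delta \subset (Q_1Q_3-Q_2^2<0)_{\mathbb R}$, this Brauer obstruction is trivial.

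The plan is to relate $\sigma_{\mathbb C}$ to a line on the degree two del Pezzo branch surface $W$ from Section~\ref{sec:dP2-lines}. By Lemma~\ref{lem:outside-bitangent-split}, the hypothesis implies that every real bitangent of $\Delta$ splits into a pair of real lines on $W$. Since each of the $28$ pairs of lines on $W_{\mathbb C}$ lies over a single geometric bitangent of $\Delta$, any Galois-invariant pair of lines on $W_{\mathbb C}$ whose corresponding bitangent is real consists of two real lines. I would argue that the geometric section $\sigma_{\mathbb C}$ comes from (equivalently, determines) such a Galois-invariant line pair on $W$, and that this pair falls into type (3) of Proposition~\ref{prop:W-lines}, i.e., it gives sections of $\pi'_1\colon W \to \mathbb P^1$. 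The hypothesis then forces this line pair to be real; lifting one of the real lines of type (3) through the inclusion $W \subset Y$ (where $W$ is the branch locus of $\piDoubleCover$) yields a real section of $\pi_1$.

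The main obstacle is the identification of the geometric section $\sigma_{\mathbb C}$ with a line-pair on $W$. Concretely, tracing through \cite[Section 4]{FJSVV}, $\sigma_{\mathbb C}$ arises from a degree four divisor $p_1 + p_2 + p_3 + p_4$ on $\Deltatilde_{\mathbb C}$ with $\piDelta_*(p_1 + \cdots + p_4) = \Delta \cap \ell$; I expect the associated curve in $Y_{\mathbb C}$ to lie in the linear system of a line on $W_{\mathbb C}$, after which Proposition~\ref{prop:W-lines} and Lemma~\ref{lem:outside-bitangent-split} conclude. If instead no such line exists for a given IJT point $\sigma_{\mathbb C}$, one would fall back on Witt's Theorem~\ref{thm:witt}: the real lines on $W$ supplied by the hypothesis provide real points in fibers above real Weierstrass points of $\Gamma$, and Corollary~\ref{cor:no-branch-pts-section}-type arguments combined with the $\mathbb C$-section $\sigma_{\mathbb C}$ force every real fiber of $\pi_1$ to avoid negative-definite signature, giving surjectivity of $\pi_1(\mathbb R)$ and thus a section.
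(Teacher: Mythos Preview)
Your primary plan has a genuine gap. The line \(\ell\subset\mathbb P^2\) produced by a point of \(\Ptildem{1}(\mathbb R)\) is an \emph{arbitrary} real line with \(\piDelta_*(p_1+\cdots+p_4)=\Delta\cap\ell\); it is not a bitangent of \(\Delta\) in general. The \(56\) lines on \(W_{\mathbb C}\) lie over the \(28\) bitangents, so there is no reason the geometric section \(\sigma_{\mathbb C}\) should correspond to a line on \(W\), and your ``main obstacle'' is not merely a missing lemma but a mismatch of objects. The fallback is also off target: you aim to show that every real fiber of \(\pi_1\colon Y\to\mathbb P^1\) is indefinite, but the hypothesis does not give this (indeed, in Example~\ref{exmp:irr2nonnested} the hypothesis holds and some fibers of \(\pi_1\) have signature \((0,4)\)); a Galois-invariant \emph{geometric} section says nothing about real signatures.

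The paper's argument stays with the conic bundle over \(\ell\), not with \(\pi_1\) directly. From \(\Ptildem{1}(\mathbb R)\neq\emptyset\) one gets, via \cite[Proposition~4.5]{FJSVV}, a real line \(\ell\) and a Galois-invariant geometric section \(\frak S\) of \(Y_\ell\to\ell\) mapping with \emph{odd degree} to \(\mathbb P^1\). By \cite[Lemma~5.1]{FJSVV} the line \(\ell\) meets \(\Delta\) in no transverse real points, so \(\ell(\mathbb R)\) lies in the closure of \(U_\Delta\); the hypothesis \(U_\Delta\subset(Q_1Q_3-Q_2^2<0)_{\mathbb R}\) then forces \(Q_1Q_3-Q_2^2\leq 0\) along \(\ell(\mathbb R)\), so every real point of \(\ell\) lifts to \(Y\) (cf.\ Lemma~\ref{lem:Impi2}). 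Hence \(Y_\ell\to\ell\) is surjective on real points, and Witt's Theorem~\ref{thm:witt} lets one choose \(\frak S\) defined over \(\mathbb R\). Finally, since \(\frak S\) has odd degree over \(\mathbb P^1\), Springer's theorem for quadrics \cite[Corollary~18.5]{EKM} gives a real section of \(\pi_1\). The two ingredients you are missing are the use of \cite[Lemma~5.1]{FJSVV} to control \(\ell(\mathbb R)\) relative to \(\Delta\), and Springer's theorem to pass from an odd-degree multisection to a section of the quadric surface bundle.
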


\begin{proof}
If \(\pi_1\) has a section, then \(Y\) is rational so the IJT obstruction vanishes by \cite[Theorem 3.11]{bw-ij}, so it remains to show the reverse implication. 
If \(\Ptildem{1}(\mathbb R)\neq\emptyset\) then by \cite[Proposition 4.5]{FJSVV} there exists a real line \(\ell\subset\mathbb P^2\) and a Galois-invariant geometric section \(\frak S\) of \(Y_\ell := Y\times_{\mathbb P^2} \ell \to\ell\) that maps with odd degree to \(\mathbb P^1\) under \(\pi_1\). \cite[Lemma 5.1]{FJSVV} and the assumption that \(U_\Delta=(Q_1Q_3-Q_2^2<0)_{\mathbb R}\) implies that every real point on \(\ell\) has preimage one or two real points in the del Pezzo surface \(W\), so \(Y_\ell\to\ell\) is surjective on real points and in the proof of \cite[Proposition 4.5]{FJSVV} \(\frak S\) may in fact be chosen to be defined over \(\mathbb R\) by Theorem~\ref{thm:witt}. By Springer's theorem \cite[Corollary 18.5]{EKM}, the quadric surface bundle \(\pi_1\) has a real section.
\end{proof}

\begin{rem}
The condition that \(U_\Delta\subset(Q_1Q_3-Q_2^2<0)_{\mathbb R}\) alone is not sufficient to guarantee rationality. The irrational example of \cite[Theorem 1.3(2)]{FJSVV} is one oval and has \(U_\Delta=(Q_1Q_3-Q_2^2<0)_{\mathbb R}\), but it has an IJT obstruction; we will generalize their example in Example~\ref{exmp:1ovalIJT} below. Example~\ref{exmp:irr2nested} will give a two nested ovals example with \(U_\Delta\subset(Q_1Q_3-Q_2^2<0)_{\mathbb R}\) (moreover in this case \(Y(\mathbb R)\) is disconnected). In these examples, the eight real lines on \(W\) are all contained in sets \eqref{item:0-intersection} and \eqref{item:2-intersection} of Proposition~\ref{prop:W-lines}.
\end{rem}

\begin{rem}\label{rem:FJSVVexmp1.5}
\cite[Example 1.5]{FJSVV} has \(\Delta(\mathbb R)\) one oval, \(U_\Delta \not\subset (Q_1Q_3-Q_2^2<0)_{\mathbb R}\), \(Y(\mathbb R)\neq\emptyset\) connected, and no IJT obstruction. Rationality of \(Y\) is still unknown in this example. We have found many additional similar examples by searching for examples where \(\Gamma\) has two real branch points and \(\pi_1(\mathbb R)\) is not surjective, and by using the code \texttt{P1tilde-bitangents.sage} in \cite{JJ-code} to verify \(\Ptildem{1}(\mathbb R)\neq\emptyset\), for example:
\[Q_1 := -u^2-v^2+w^2, \quad
Q_2 := u^2-3v^2-w^2, \quad
Q_3 := -10u^2-10v^2-w^2.\]




\end{rem}

We now apply Proposition~\ref{prop:negative-outside-IJT} to the cases of no ovals, two nested ovals, and two non-nested ovals.

\begin{cor}[Theorem~\ref{thm:DoubleCoverRationalityCriteria}\eqref{item:0ovals-criterion}]\label{cor:no-ovals-IJT}
If \(\Delta(\mathbb R)=\emptyset\) and \(Y(\mathbb R)\neq\emptyset\), then the IJT obstruction vanishes if and only if \(\pi_1\) has a section.
\end{cor}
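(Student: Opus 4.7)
The plan is to reduce the corollary to Proposition~\ref{prop:negative-outside-IJT}. Since \(\Delta(\mathbb R)=\emptyset\), the polynomial \(Q_1Q_3-Q_2^2\) has no real zeros on \(\mathbb P^2(\mathbb R)\), so its sign is constant there; I will split into the two corresponding cases.

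If \(Q_1Q_3-Q_2^2<0\) everywhere on \(\mathbb P^2(\mathbb R)\), then \(U_\Delta=\mathbb P^2(\mathbb R)\subset(Q_1Q_3-Q_2^2<0)_{\mathbb R}\), and the biconditional follows directly from Proposition~\ref{prop:negative-outside-IJT}.

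In the remaining case \(Q_1Q_3-Q_2^2>0\) everywhere, I plan to show that both sides of the biconditional hold unconditionally. The inequality \(Q_1(P)Q_3(P)>Q_2(P)^2\geq 0\) for all \(P\in\mathbb P^2(\mathbb R)\) forces \(Q_1\) and \(Q_3\) to be definite of the same sign. The hypothesis \(Y(\mathbb R)\neq\emptyset\) will rule out \(Q_1\) negative definite: in that case the binary form \(t_0^2Q_1(P)+2t_0t_1Q_2(P)+t_1^2Q_3(P)\) has leading coefficient \(Q_1(P)<0\) and negative discriminant \(4(Q_2(P)^2-Q_1(P)Q_3(P))\) for every \(P\), hence is negative definite in \((t_0,t_1)\), making the right-hand side of \eqref{eqn:DoubleCover} strictly negative for nonzero input and contradicting \(Y(\mathbb R)\neq\emptyset\). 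So \(Q_1\) must be positive definite, and the same discriminant computation now shows the right-hand side of \eqref{eqn:DoubleCover} is a positive definite binary form in \((t_0,t_1)\) for each nonzero \((u,v,w)\). Taking \(z=\sqrt{\mathrm{RHS}}\) then produces a real point of \(Y\) over every real point of \(\mathbb P^1\times\mathbb P^2\); in particular the induced map \(\pi_1(\mathbb R)\colon Y(\mathbb R)\to\mathbb P^1(\mathbb R)\) is surjective. Witt's theorem (Theorem~\ref{thm:witt}) applied to the quadric surface bundle \(\pi_1\) then yields a real section, so \(Y\) is rational by Proposition~\ref{prop:Yproperties}\eqref{item:Yquadricsurfacebundle}, and a fortiori the IJT obstruction vanishes by \cite[Theorem 3.11]{bw-ij}.

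I do not expect any substantive obstacle: the only conceptual point is recognizing that the hypothesis of Proposition~\ref{prop:negative-outside-IJT} may fail in the \(Q_1Q_3-Q_2^2>0\) regime, but in that regime the definiteness of \(Q_1\) (which is forced by \(Y(\mathbb R)\neq\emptyset\)) together with Witt's theorem produces a section by a direct and independent argument.
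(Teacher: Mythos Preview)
Your proposal is correct and follows essentially the same approach as the paper: both proofs split on the sign of \(Q_1Q_3-Q_2^2\), apply Proposition~\ref{prop:negative-outside-IJT} when it is negative, and show that \(\pi_1(\mathbb R)\) is surjective (hence \(\pi_1\) has a section by Witt) when it is positive. The only notable difference is in that positive case: the paper invokes Lemma~\ref{lem:Impi2} together with the \(\PGL_2\)-action on the \(Q_i\) to conclude every fiber of \(\pi_1\) has signature \((3,1)\), whereas you give the more elementary and self-contained discriminant argument showing the binary form \(t_0^2Q_1(P)+2t_0t_1Q_2(P)+t_1^2Q_3(P)\) is positive definite for each \(P\).
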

\begin{proof}
It suffices to show that if \(\pi_1\) is not surjective on real points, then \(Y\) has an IJT obstruction to rationality.
First, note that in the \(\Delta(\mathbb R)=\emptyset\) case we have that \((Q_1Q_3-Q_2^2<0)_{\mathbb R}\) is either empty or equal to \(U_\Delta\). The assumptions that \(\Delta(\mathbb R)=\emptyset\) and \(Y(\mathbb R)\neq\emptyset\) imply that \(U_\Delta = (Q_1Q_3-Q_2^2<0)_{\mathbb R}\) and that the image of \(\pi_2(\mathbb R)\) is \(\mathbb P^2(\mathbb R)\): this is immediate from Lemma~\ref{lem:Impi2} if \(Q_1\) is negative definite, and follows from the containment \((Q_1=0)_{\mathbb R}\subset (Q_1Q_3 - Q_2^2\leq 0)_{\mathbb R}\) if \(Q_1\) is indefinite. If \(Q_1\) is positive definite and \((Q_1Q_3-Q_2^2<0)_{\mathbb R}\) is empty, then using Lemma~\ref{lem:Impi2} and the \(\PGL_2\) action on the quadratic forms \(Q_i\) described in \cite[Theorem 2.6(1)]{FJSVV}, it follows that every fiber of \(\pi_1\) has signature \((3,1)\), which contradicts the assumption that \(\pi_1(\mathbb R)\) is not surjective. Thus, we must have \(U_\Delta=(Q_1Q_3-Q_2^2<0)_{\mathbb R}=\mathbb P^2(\mathbb R)\). The claim then follows from Proposition~\ref{prop:negative-outside-IJT}.
\end{proof}

\begin{cor}\label{cor:two-nested-IJT}
Assume \(\Delta(\mathbb R)\) is two nested ovals. If the IJT obstruction vanishes, then \(Y(\mathbb R)\) is connected.
\end{cor}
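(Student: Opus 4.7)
The plan is to split on the sign of $Q_1Q_3-Q_2^2$ on the outside region $U_\Delta$. Since $U_\Delta$ is connected and $\Delta=(Q_1Q_3-Q_2^2=0)$ does not meet it, the sign of $Q_1Q_3-Q_2^2$ is constant on $U_\Delta$. When $\Delta(\mathbb R)$ is two nested ovals, $\mathbb P^2(\mathbb R)\setminus\Delta(\mathbb R)$ has three connected components---$U_\Delta$, the annulus between the two ovals, and the inner disc---and the sign of $Q_1Q_3-Q_2^2$ alternates as we cross each oval.

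\emph{Case A: $U_\Delta\subset(Q_1Q_3-Q_2^2<0)_{\mathbb R}$.} Proposition~\ref{prop:negative-outside-IJT} applies, so the vanishing of the IJT obstruction forces the quadric surface bundle $\pi_1$ to admit a real section. In particular $\pi_1(\mathbb R)$ is surjective onto $\mathbb P^1(\mathbb R)$, which is connected, so Lemma~\ref{lem:ImComponents} gives that $Y(\mathbb R)$ is connected.

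\emph{Case B: $U_\Delta\subset(Q_1Q_3-Q_2^2>0)_{\mathbb R}$.} Here I plan to show $Y(\mathbb R)$ is connected unconditionally, with no input from the IJT obstruction. The sign-alternation rule forces $(Q_1Q_3-Q_2^2<0)_{\mathbb R}$ to equal the open annulus, so $(Q_1Q_3-Q_2^2\leq 0)_{\mathbb R}$ is the closed annulus, which is connected. By Lemma~\ref{lem:Impi2}, the image of $\pi_2(\mathbb R)$ is $(Q_1\geq 0)_{\mathbb R}\cup (Q_1Q_3-Q_2^2\leq 0)_{\mathbb R}$. The first summand is itself connected (a closed subset of $\mathbb P^2(\mathbb R)$ cut out by a single quadratic inequality), and if $Q_1$ is negative definite this set is empty so the image is the closed annulus, while if $Q_1$ is positive definite it is all of $\mathbb P^2(\mathbb R)$. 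Otherwise $(Q_1=0)_{\mathbb R}$ is nonempty and contained in $(Q_1Q_3-Q_2^2\leq 0)_{\mathbb R}$, so the two pieces share a nonempty subset and their union is connected. Lemma~\ref{lem:ImComponents} then yields $Y(\mathbb R)$ connected.

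The hypothesis on the IJT obstruction is used only to invoke Proposition~\ref{prop:negative-outside-IJT} in Case A; the work in Case B is purely a topological bookkeeping of the sign of $Q_1Q_3-Q_2^2$ across the three components of $\mathbb P^2(\mathbb R)\setminus\Delta(\mathbb R)$. The main point to verify carefully is the indefinite subcase of Case B, where one needs that the nonempty locus $(Q_1=0)_{\mathbb R}$ glues the two closed sets in the image of $\pi_2(\mathbb R)$ together; this is immediate from the containment $(Q_1=0)_{\mathbb R}\subseteq (Q_1Q_3-Q_2^2\leq 0)_{\mathbb R}$.
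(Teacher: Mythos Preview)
Your proof is correct and follows essentially the same approach as the paper. The paper argues by contrapositive---assuming $Y(\mathbb R)$ is disconnected, it shows the image $(Q_1\geq 0)_{\mathbb R}\cup(Q_1Q_3-Q_2^2\leq 0)_{\mathbb R}$ is disconnected, which (using $(Q_1=0)_{\mathbb R}\subset(Q_1Q_3-Q_2^2\leq 0)_{\mathbb R}$) forces $U_\Delta\subset(Q_1Q_3-Q_2^2<0)_{\mathbb R}$, and then Proposition~\ref{prop:negative-outside-IJT} gives the contradiction---whereas you split directly on the sign on $U_\Delta$ and handle Case~B explicitly; the underlying argument and the key input (Proposition~\ref{prop:negative-outside-IJT}) are identical.
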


\begin{proof}
Assume \(Y(\mathbb R)\) is disconnected; we will show that \(\Ptildem{1}(\mathbb R)=\emptyset\).
Disconnectedness of \(Y(\mathbb R)\) implies that \((Q_1\geq 0)_{\mathbb R}\cup (Q_1Q_3-Q_2^2\leq 0)_{\mathbb R}\) is disconnected and contains \(U_\Delta\). Since \((Q_1=0)_{\mathbb R}\subset (Q_1Q_3-Q_2^2\leq 0)_{\mathbb R}\), we must have that \(U_\Delta\subset (Q_1Q_3-Q_2^2<0)_{\mathbb R}\). If the IJT obstruction vanishes, then Proposition~\ref{prop:negative-outside-IJT} implies that the quadric surface bundle \(\pi_1\) has a section, which is impossible since \(Y(\mathbb R)\) is disconnected.
\end{proof}

\begin{cor}[Theorem~\ref{thm:DoubleCoverRationalityCriteria}\eqref{item:2nonnested-criterion}]\label{cor:two-non-nested-sufficiency}
If \(\Delta(\mathbb R)\) is two non-nested ovals and \(Y(\mathbb R)\) is connected, then \(\pi_1\) has a section if and only if the IJT obstruction vanishes.
\end{cor}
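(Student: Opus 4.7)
The plan is to dichotomize on the sign of $Q_1Q_3-Q_2^2$ on $U_\Delta$ and apply Proposition~\ref{prop:negative-outside-IJT} in one branch and Proposition~\ref{prop:Yproperties}\eqref{item:Deltatilde-point-section} in the other. One direction is immediate: a real section of $\pi_1$ makes $Y$ rational by Proposition~\ref{prop:Yproperties}\eqref{item:Yquadricsurfacebundle}, which kills the IJT obstruction by \cite[Theorem 3.11]{bw-ij}. I will therefore assume the IJT obstruction vanishes and work to produce a real section of $\pi_1$.

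Write $D_1, D_2$ for the two disjoint open discs bounded by the ovals of $\Delta$. The form $Q_1Q_3-Q_2^2$ has a constant nonzero sign on each of $U_\Delta, D_1, D_2$, alternating across each oval, so either $U_\Delta \subset (Q_1Q_3-Q_2^2<0)_{\mathbb{R}}$ (Case 1) or $U_\Delta = (Q_1Q_3-Q_2^2>0)_{\mathbb{R}}$ with $(Q_1Q_3-Q_2^2\leq 0)_{\mathbb{R}} = \overline{D_1}\sqcup\overline{D_2}$ (Case 2). In Case 1, Proposition~\ref{prop:negative-outside-IJT} immediately supplies the section.

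For Case 2, the plan is to show $\Deltatilde(\mathbb{R})\neq\emptyset$. Here $(Q_1=0)_{\mathbb{R}}\subseteq(Q_1Q_3-Q_2^2\leq 0)_{\mathbb{R}}=\overline{D_1}\sqcup\overline{D_2}$, and this zero locus is connected whenever non-empty (as the real locus of a plane conic is either empty, a point, one or two projective lines meeting at a point, or a smooth oval), so it lies entirely in one of the two closed discs; after relabeling, in $\overline{D_1}$. Then $Q_1$ has constant nonzero sign on $\overline{D_2}$, which is connected and disjoint from $\overline{D_1}$. If $Q_1>0$ on $\overline{D_2}$, then Lemma~\ref{lem:pi1surjDeltatildeEmpty} shows $\piDelta(\mathbb{R})$ surjects onto the oval $\partial D_2$, giving $\Deltatilde(\mathbb{R})\neq\emptyset$ and a section by Proposition~\ref{prop:Yproperties}\eqref{item:Deltatilde-point-section}.

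The main obstacle is to rule out $Q_1<0$ on $\overline{D_2}$, where the plan is to derive a contradiction from the connectedness of $Y(\mathbb{R})$. In this scenario, $(Q_1\geq 0)_{\mathbb{R}}\cap\overline{D_2}=\emptyset$, and by Lemma~\ref{lem:Impi2} the image of $\pi_2(\mathbb{R})$ is $(Q_1\geq 0)_{\mathbb{R}}\cup\overline{D_1}\cup\overline{D_2}$. Any connected component $K$ of the closed set $(Q_1\geq 0)_{\mathbb{R}}$ has topological boundary $K\cap(Q_1=0)_{\mathbb{R}}\subseteq\overline{D_1}$, so a component disjoint from $\overline{D_1}$ would be clopen in the connected space $\mathbb{P}^2(\mathbb{R})$, forcing it to equal all of $\mathbb{P}^2(\mathbb{R})$ and contradicting $Q_1<0$ on $\overline{D_2}$. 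Hence every component of $(Q_1\geq 0)_{\mathbb{R}}$ meets $\overline{D_1}$, leaving $\overline{D_2}$ as an isolated connected component of the image of $\pi_2(\mathbb{R})$; Lemma~\ref{lem:ImComponents} then forces $Y(\mathbb{R})$ to be disconnected, the desired contradiction.
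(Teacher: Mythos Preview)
Your proof is correct and follows essentially the same approach as the paper: dichotomize on the sign of $Q_1Q_3-Q_2^2$ on $U_\Delta$, apply Proposition~\ref{prop:negative-outside-IJT} in the first case, and in the second case use connectedness of $Y(\mathbb R)$ together with $(Q_1=0)_{\mathbb R}\subset\overline{D_1}\sqcup\overline{D_2}$ to force $\Deltatilde(\mathbb R)\neq\emptyset$ and hence a section via Proposition~\ref{prop:Yproperties}\eqref{item:Deltatilde-point-section}. Your Case~2 is spelled out in more detail than the paper's one-line version, which simply asserts that connectedness of $Y(\mathbb R)$ gives $(Q_1>0)_{\mathbb R}\cap(Q_1Q_3-Q_2^2<0)_{\mathbb R}\neq\emptyset$ and hence $\Deltatilde(\mathbb R)\neq\emptyset$; your explicit clopen argument and the sub-dichotomy on the sign of $Q_1$ on $\overline{D_2}$ make the topology transparent. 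One small citation nit: what you actually use from Lemma~\ref{lem:pi1surjDeltatildeEmpty} is the inclusion $(\Delta=0)_{\mathbb R}\cap(Q_1>0)_{\mathbb R}\subset\mathrm{Im}\,\piDelta(\mathbb R)$ established in its proof, rather than the partition statement itself.
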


\begin{proof}
If \(U_\Delta =(Q_1Q_3-Q_2^2<0)_{\mathbb R}\) then the result follows from Proposition~\ref{prop:negative-outside-IJT}, so we may assume that \(U_\Delta\neq (Q_1Q_3-Q_2^2<0)_{\mathbb R}\). Then \((Q_1Q_3-Q_2^2<0)_{\mathbb R}\) is a disjoint union of two discs, so connectedness of \(Y(\mathbb R)\) implies \((Q_1>0)_{\mathbb R}\cap (Q_1Q_3-Q_2^2<0)_{\mathbb R}\neq\emptyset\), which implies \(\Deltatilde(\mathbb R)\neq\emptyset\).
\end{proof}

\begin{rem}\label{rem:2nonnested-criterion}
Corollary~\ref{cor:two-non-nested-sufficiency} shows that the topological criterion of connectedness of \(Y(\mathbb R)\) combined with the vanishing of the IJT obstruction is sufficient to guarantee rationality of \(Y\) in the two non-nested ovals case. In this case, neither condition alone is sufficient. Example~\ref{exmp:irr2nonnested} has \(Y(\mathbb R)\) connected but has an IJT obstruction, and in \cite[Theorem 1.3(1)]{FJSVV} the IJT obstruction vanishes but \(Y(\mathbb R)\) is disconnected.
\end{rem}

\begin{rem}
Throughout this section we have assumed that \(Q_1Q_3-Q_2^2\) is negative outside the ovals. In the case when \(Q_1Q_3-Q_2^2\) is instead negative inside the ovals, we can immediately determine rationality in several cases: if \(\Delta(\mathbb R)=\emptyset\) then exactly one of \(Y_{\Deltatilde/\Delta}\) or \(Y_{\Deltabar/\Delta}\) is rational, and the other has no real points; and if \(\Delta(\mathbb R)\) is two or three non-nested ovals, then exactly one of \(Y_{\Deltatilde/\Delta}\) or \(Y_{\Deltabar/\Delta}\) is rational, and the other has disconnected real locus.
However, in the cases of one oval or two non-nested ovals, rationality is less clear. In these cases, at least one of \(Y_{\Deltatilde/\Delta}\) or \(Y_{\Deltabar/\Delta}\) is rational by Corollary~\ref{cor:YorTwistRational}.
Example~\ref{exmp:rational2nestedovals} shows that for two nested ovals case, it is possible for both \(Y_{\Deltatilde/\Delta}\) and \(Y_{\Deltabar/\Delta}\) to be rational. For one oval, \cite[Example 1.5]{FJSVV} gives an example where \(Y_{\Deltabar/\Delta}\) is rational, but rationality of \(Y_{\Deltatilde/\Delta}\) is unknown; see also Remark~\ref{rem:1oval-rational-question}.
\end{rem}

\section{Construction of examples}\label{sec:ExamplesConstruction}

In this section, we construct examples of conic bundles by giving equations for quadrics \(Q_1,Q_2,Q_3\) and taking \(Y:= Y_{\Deltatilde/\Delta}\) and \(\Deltatilde\to\Delta\) to be as defined in Section~\ref{sec:DoubleCover}.
Our examples are constructed in the same manner as those of Frei--Ji--Sankar--Viray--Vogt.
The topological type of \(\Delta(\mathbb R)\) is determined using the \texttt{Sage} code accompanying \cite{PSV-quartics}. Smoothness of \(\Delta\) and \(\Deltatilde\), and the numerical claims about the signatures of the fibers of \(\pi_1\) can be verified by hand or with the code \texttt{Quadric-bundle-verifications.sage} in our \texttt{GitHub} respository \cite{JJ-code}, which is a \texttt{Sage} implementation of the \texttt{Magma} code accompanying \cite{FJSVV}. By deforming the coefficients in each example, one can obtain similar examples of each type; we refer the interested reader to the \texttt{Macaulay2} code \texttt{Singular-members.m2} in \cite{JJ-code}, which one can use to find singular members in such a one-parameter family.

We first construct irrational examples in Section~\ref{sec:IrrationalExamples}, where irrationality is witnessed either by the IJT obstruction (Section~\ref{sec:IJT-sufficient}) or by the real locus of \(Y(\mathbb R)\). In Section~\ref{sec:IJT-failure}, we show that several of these examples are irrational despite having no IJT obstruction, further demonstrating the insufficiency of this obstruction when \(Q_1Q_3-Q_2^2\) is negative inside \(\Delta\). Finally, we construct rational examples in Section~\ref{sec:RationalExamples}.

Before giving our constructions, we first outline where they fit in by giving a proof of Theorem~\ref{thm:main}:
\begin{proof}[Proof of Theorem~\ref{thm:main}]
As mentioned in the introduction, \eqref{item:Deltatildepoint_rational} is \cite[Proposition 6.1]{FJSVV}. For \eqref{item:0_oval},
Example~\ref{exmp:rationalDeltaempty} is rational, Example~\ref{exmp:0ovalIJT} is irrational and has real points, and Example~\ref{exmp:DeltaemptyIrr} has no real points. For \eqref{item:1_oval}, Example~\ref{exmp:rational1oval} is rational and \cite[Theorem 1.3(2)]{FJSVV} is irrational (Example~\ref{exmp:1ovalIJT} gives additional examples with the same obstruction).
For~\eqref{item:2_nonnested}, Example~\ref{exmp:rational2nonnestedovals}
is rational, Example~\ref{exmp:irr2nonnested} is irrational and connected, and \cite[Theorem 1.3(1)]{FJSVV} is irrational and disconnected.
For~\eqref{item:disconnected-types}, in the two nested ovals case Examples~\ref{exmp:rational2nestedovals}\eqref{item:2nested-Deltatildeempty}--\eqref{item:2nested-Deltatildepoint} are rational and Example~\ref{exmp:irr2nested} is irrational, and in the three ovals case Example~\ref{exmp:rational3ovals} is rational and Example~\ref{exmp:irr3oval} is irrational. Finally, Example~\ref{exmp:rational4ovals} shows \eqref{item:4_ovals}.
\end{proof}

\subsection{Construction of irrational examples}\label{sec:IrrationalExamples}
We will now construct the irrational examples of Theorem~\ref{thm:main}.
Example~\ref{exmp:irr2nonnested} below, together with \cite[Theorem 1.3(1)]{FJSVV}, shows the necessity of both the topological and IJT conditions in Theorem~\ref{thm:DoubleCoverRationalityCriteria}\eqref{item:2nonnested-criterion}.

In our setting, many obstructions to rationality automatically vanish:
\(\Br Y\cong\Br\mathbb R\), the intermediate Jacobian of \(Y\) is isomorphic to \(\bPic^0_\Gamma\), and the unramified cohomology groups are trivial whenever \(Y(\mathbb R)\) is connected; see \cite[Section 1.1]{FJSVV} for details. Our examples will use the topological and IJT obstructions:
Example~\ref{exmp:DeltaemptyIrr} has a real points obstruction to (uni)rationality;
Examples~\ref{exmp:0ovalIJT}, \ref{exmp:1ovalIJT}, and \ref{exmp:irr2nonnested} have an IJT obstruction to rationality; and Examples~\ref{exmp:irr2nested} and \ref{exmp:irr3oval} have a real components obstruction to (stable) rationality.

\begin{exmp}[Pointless example with \(\Delta(\mathbb R)=\emptyset\)]\label{exmp:DeltaemptyIrr}
Let \(Y\) be the double cover of \(\mathbb P^1\times\mathbb P^2\) constructed in Section~\ref{sec:DoubleCover} for the quadrics
\[ Q_1 := -u^2-v^2-w^2, \quad
Q_2 := -u^2-v^2+w^2, \quad
Q_3 := -2u^2 -9v^2 - 3w^2. \]
Then \(\Delta(\mathbb R)=\emptyset\), \(\Gamma\) is defined by \(y^2=t^6 + 2t^5 + 10t^4 + 4t^3 + 19t^2 + 30t + 54\), and \(\Gamma\) has no real Weierstrass points. In particular, \(\Gamma(\mathbb R)\neq\emptyset\) is connected. The fibers of \(\pi_1\) all have signature \((0,4)\), so \(Y(\mathbb R)=\emptyset\).
\end{exmp}

The next three examples are unirational, and irrationality is witnessed by the IJT obstruction.

\begin{exmp}[Irrational example with \(\Delta(\mathbb R)=\emptyset\) and \(Y(\mathbb R)\neq\emptyset\)]\label{exmp:0ovalIJT}
By Corollary~\ref{cor:no-ovals-IJT}, if \(Q_1\) is positive definite and \(Q_3\) is negative definite, and the resulting \(\Deltatilde\to\Delta\) is an \'etale cover of a smooth curve, then both \(Y_{\Deltatilde/\Delta}\) and \(Y_{\Deltabar/\Delta}\) have IJT obstructions to rationality. For an explicit example, one may take \[Q_1 := u^2+v^2+w^2, \quad Q_2 := u^2 - v^2, \quad Q_3 := -u^2-v^2-9w^2.\]
\end{exmp}

\begin{exmp}[Irrational example with \(\Delta(\mathbb R)\) one oval]\label{exmp:1ovalIJT}
Let \(Q_1,Q_2,Q_3\) be quadrics such that the resulting \(\Deltatilde\to\Delta\) is an \'etale cover of a smooth curve and such that \((Q_1Q_3-Q_2^2\leq 0)_{\mathbb R}\) is a non-orientable subset of \(\mathbb P^2(\mathbb R)\). If \(\pi_1\) is not surjective on real points, then \(Y_{\Deltatilde/\Delta}\) has an IJT obstruction by Proposition~\ref{prop:negative-outside-IJT}.

\cite[Theorem 1.3(2)]{FJSVV} gives an explicit example of such a choice of quadrics. Alternatively, one may also take the following (noting that \((u=0)\subset(Q_1Q_3-Q_2^2 < 0)_{\mathbb R}\), so \((Q_1Q_3-Q_2^2 < 0)_{\mathbb R}\) is not orientable): \[Q_1 := -u^2-v^2+w^2, \quad Q_2 := u^2 - uv + 3v^2, \quad Q_3 := -u^2+v^2+2vw-10w^2.\]
\end{exmp}

\begin{figure}[h]
\caption{The regions \((Q_1Q_3-Q_2^2\leq 0)_{\mathbb R}\) in blue and \((Q_1\geq 0)_{\mathbb R}\) in red in Example~\ref{exmp:irr2nonnested} (left), Example~\ref{exmp:irr2nested} (center) and Example~\ref{exmp:irr3oval} (right), on the affine open chart \((w\neq 0)\). In each example, the image of \(Y(\mathbb R)\) in \(\mathbb P^2(\mathbb R)\) is equal to \((Q_1Q_3-Q_2^2\leq 0)_{\mathbb R}\).}
\includegraphics[height=50mm]{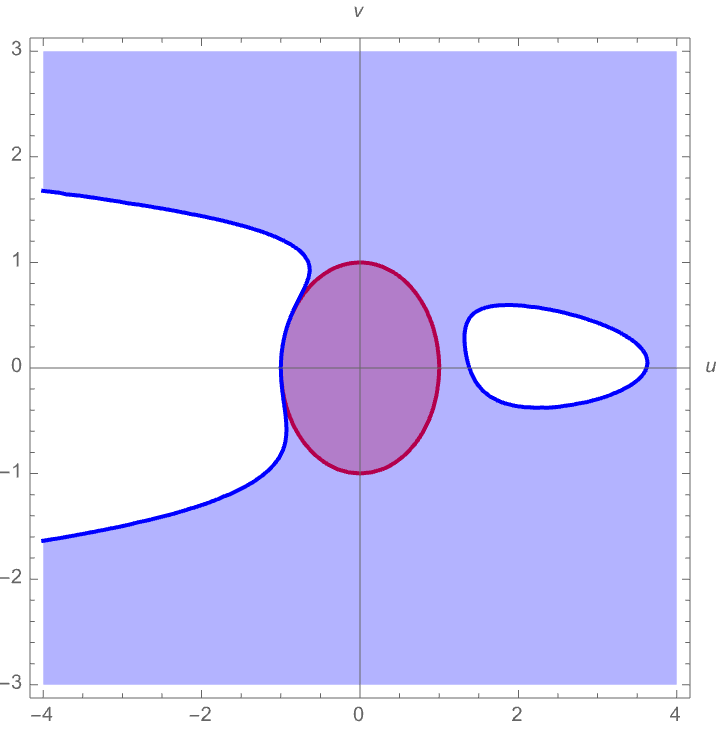} \quad
\includegraphics[height=50mm]{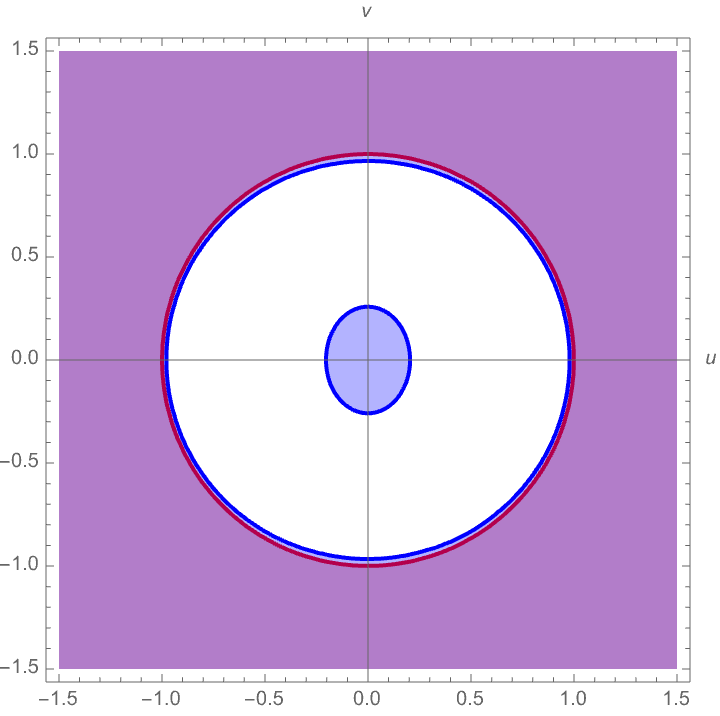} \quad
\includegraphics[height=50mm]{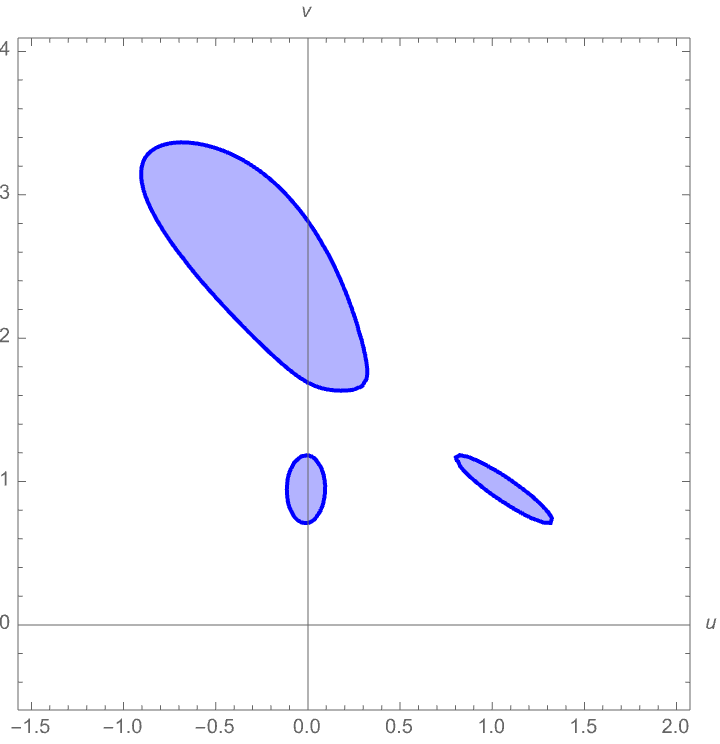}
\label{fig:3oval-IJT}
\end{figure}

\begin{exmp}[Irrational, connected example with \(\Delta(\mathbb R)\) two non-nested ovals]\label{exmp:irr2nonnested}

Define
\[Q_1 := -u^2-v^2+w^2, \quad
Q_2 := u^2+3v^2+uw-vw, \quad
Q_3 := -u^2+v^2+2vw-10w^2\]
and let \(Y\) be the associated double cover of \(\mathbb P^1\times\mathbb P^2\) constructed in Section~\ref{sec:DoubleCover}. Then \(\Delta(\mathbb R)\) is two non-nested ovals. \(\Gamma\) is defined by \(y^2=-t^6 + 8t^5 - 4t^4 - 66t^3 + 116t^2 - 36t - 11\) and has six real branch points \([t:1]\) with \(t\approx -2.9708, -0.1845, 0.7545, 1.5708, 2.8152, 6.0149\). The signatures of the fibers \(Y_{[t : 1]}\) are as follows:
\begin{center}
\begin{tabular}{ c | c | c | c | c | c | c}
 \(t\) & \(-2\) & \(0\) & \(1\) & \(2\) & \(4\) & \(7\) \\ 
 \hline
 Signature & \((0,4)\) & \((1,3)\) & \((2,2)\) & \((1,3)\) & \((2,2)\) & \((1,3)\)
\end{tabular}
\end{center}
In particular, \(Y(\mathbb R)\) is connected and \(\pi_1\) is not surjective on real points. This implies \(\Deltatilde(\mathbb R)=\emptyset\) by Proposition~\ref{prop:Yproperties}\eqref{item:Deltatilde-point-section}, so \(U_\Delta=(Q_1Q_3-Q_2^2<0)_{\mathbb R}\). Thus, by Proposition~\ref{prop:negative-outside-IJT}, \(Y\) has an IJT obstruction to rationality.

\end{exmp}

In the following examples, we construct conic bundles with disconnected real loci. These examples are unirational but not stably rational.
Recall from Corollary~\ref{cor:components-of-Y-and-Delta} that if \(Y\) is constructed as in Section~\ref{sec:DoubleCover} and has disconnected real locus, then \(\Delta(\mathbb R)\) must be two or three ovals. The following examples, together with \cite[Theorem 1.3(1)]{FJSVV} show these cases all occur:
Frei--Ji--Sankar--Viray--Vogt gave an example where \(\Delta(\mathbb R)\) is two non-nested ovals and \(Y(\mathbb R)\) has two connected components, and here we use their methods to give examples with two nested ovals and three ovals.

\begin{exmp}[Disconnected example with \(\Delta(\mathbb R)\) two nested ovals]\label{exmp:irr2nested}
Define
\[
Q_1 :=  u^2 + v^2 - w^2, \quad
Q_2 :=  u^2 + v^2, \quad
Q_3 :=  -24u^2 - 15v^2 + w^2,
\]
and let \(Y\) be the associated double cover of \(\mathbb P^1\times\mathbb P^2\) constructed in Section~\ref{sec:DoubleCover}.
Then \(\Delta(\mathbb R)\) is two nested ovals, and \(\Gamma\) is defined by \(y^2 = t^6 + 4t^5 - 36t^4 - 82t^3 + 395t^2 + 78t - 360\) and has real Weierstrass points over \([t:1]\) with \(t = -6, -5, -1, 1, 3, 4\). The signatures of the fibers \(Y_{[t : 1]}\) are as follows.
\begin{center}
\begin{tabular}{ c | c | c | c | c | c | c}
 \(t\) & \(-5.5\) & \(-3\) & \(0\) & \(2\) & \(3.5\) & \(5\) \\ 
 \hline
 Signature & \((1,3)\) & \((0,4)\) & \((1,3)\) & \((0,4)\) & \((1,3)\) & \((2,2)\)
\end{tabular}
\end{center}
Thus, \(Y(\mathbb R)\) has two connected components by Lemma~\ref{lem:ImComponents}.
\end{exmp}

\begin{exmp}[Disconnected example with \(\Delta(\mathbb R)\) three ovals]
\label{exmp:irr3oval}
Let \(Y\) be as defined in Section~\ref{sec:DoubleCover} for
\begin{gather*} Q_1 := -2u^2 - 2uv + 4uw - 2v^2 + 6vw - 5w^2, \quad
Q_2 := 10uv - 20uw + 5v^2 - 20vw + 20w^2, \\
Q_3 := -48u^2 - 48uv + 96uw - 20v^2 + 88vw - 92w^2. \end{gather*}
Then \(\Delta(\mathbb R)\) is three ovals. The genus two curve \(\Gamma\) is defined by \(y^2 = t^6 - 56t^4 + 784t^2 - 2304\) and has six real Weierstrass points over \(t = -6, -4, -2, 2, 4, 6\). The signatures of the \(Y_{[t : 1]}\) are as follows.
 \begin{center}
\begin{tabular}{ c | c | c | c | c | c | c}
 \(t\) & \(-5\) & \(-3\) & \(0\) & \(3\) & \(5\) & \(7\) \\ 
  \hline
 Signature & \((1,3)\) & \((0,4)\) & \((1,3)\) & \((0,4)\) & \((1,3)\) & \((0,4)\)
\end{tabular}
\end{center}
Thus, \(Y(\mathbb R)\) has three connected components by Lemma~\ref{lem:ImComponents}.
\end{exmp}

\begin{rem}
If \(X\) is a smooth complete intersection of two quadrics in \(\mathbb P^5\) that contains a conic \(C\) defined over \(\mathbb R\), then projection from the conic realizes the blow up of \(X\) along \(C\) as a conic bundle with quartic discriminant curve \cite[Remark 13]{ht-intersection-quadrics}. Krasnov's topological classification of intersections of quadrics \cite[Theorem 5.4]{Krasnov-biquadrics} shows that a conic bundle arising in such a way can have at most two real connected components, so in particular the conic bundle of Example~\ref{exmp:irr3oval} is not birational over \(\mathbb R\) to an intersection of two quadrics.
In the following section, we will also see using \cite[Corollary 6.3]{FJSVV} that Example~\ref{exmp:DeltaemptyIrr} cannot be obtained from an intersection of two quadrics by projection from a conic.
\end{rem}

\begin{rem}\label{rem:signatures}
When \(\pi_1\) is not surjective on real points, the signature sequence of the fibers of \(\pi_1\) appears to dictate the properties of \(Y\) and \(\Delta\). The examples we have found have all followed the following pattern:
\begin{enumerate}
\item
Signatures \((0,4),(1,3)\): Experimentally, these examples behave like \cite[Example 1.5]{FJSVV}: \(\Delta(\mathbb R)\) is one oval, \(Q_1Q_3-Q_2^2>0\) outside \(\Delta\), and \(\Ptildem{1}(\mathbb R)\neq\emptyset\). 
In this case we cannot determine rationality.
\item
Signatures \((0,4),(1,3),(2,2),(1,3)\): Experimentally, these examples behave like Example~\ref{exmp:1ovalIJT}: \(\Delta(\mathbb R)\) is one oval and \(Q_1Q_3-Q_2^2<0\) outside \(\Delta\), so there is an IJT obstruction to rationality.
\item
Signatures \((0,4),(1,3),(2,2),(1,3),(2,2),(1,3)\): Experimentally the examples have exhibited the behavior of Example~\ref{exmp:irr2nonnested}: \(\Delta(\mathbb R)\) is two non-nested ovals, and there is an IJT obstruction.
\item
Signatures \((0,4),(1,3),(2,2),(3,1),(2,2),(1,3)\): This is the setting of Corollary~\ref{cor:no-ovals-IJT}. In this case \(\Delta(\mathbb R)\) is empty, the image of \(\pi_2(\mathbb R)\) is \(\mathbb P^2(\mathbb R)\), and there is an IJT obstruction.
\item
Signatures \((0,4),(1,3),(0,4),(1,3)\): \(Y(\mathbb R)\) has two connected components, and experimentally \(\Delta(\mathbb R)\) has been two non-nested ovals and \(\Ptildem{1}(\mathbb R)\neq\emptyset\).
\item
Signatures \((0,4),(1,3),(0,4),(1,3),(2,2),(1,3)\): \(Y(\mathbb R)\) has two connected components and \(\Delta(\mathbb R)\) must be two nested ovals by Remark~\ref{rem:2,2sig-nested}. \(Y\) has both a topological and IJT obstruction by Corollary~\ref{cor:two-nested-IJT}.
\item
Signatures \((0,4),(1,3),(0,4),(1,3),(0,4),(1, 3)\): \(Y(\mathbb R)\) has three components, and \(\Delta(\mathbb R)\) is necessarily three ovals by Corollary~\ref{cor:components-of-Y-and-Delta}. Experimentally \(\Ptildem{1}\) has \(\mathbb R\)-points.
\end{enumerate}
We do not know if there exists a two nested ovals example with \(Y(\mathbb R)\) connected and \(\pi_1\) not surjective on real points. If one could use the above signature sequences to give an isotopy classification for \(Y\), in the style of the intersection of two quadrics situation \cite{AgrachevGamkrelidze,Krasnov-biquadrics}, one might be able to show that in the two nested ovals case \(Y\) is rational \(\iff\) \(Y(\mathbb R)\) is connected \(\iff\) the IJT obstruction vanishes.
\end{rem}

\subsection{Failure of the intermediate Jacobian torsor obstruction}\label{sec:IJT-failure}

Proposition~\ref{prop:negative-outside-IJT} shows that the IJT obstruction characterizes rationality for \(Y\) if \(Q_1Q_3-Q_2^2\) is negative on the outside of \(\Delta\) (which implies that \(Y(\mathbb R)\neq\emptyset\)).
However, the IJT obstruction is no longer sufficient to show rationality when if \(Q_1Q_3-Q_2^2\) is negative inside the ovals of \(\Delta\), as shown by the two non-nested ovals example of \cite[Theorem 1.3(1)]{FJSVV}. In this section, we use the techniques of \cite[Theorem 1.3(1)]{FJSVV} to show that Examples~\ref{exmp:DeltaemptyIrr} and \ref{exmp:irr3oval} give further examples of this failure of the IJT obstruction to characterize rationality.

Recall that \(\Delta\) and \( (Q_1Q_3-Q_2^2<0)_{\mathbb R}\) each have no real points in Example~\ref{exmp:DeltaemptyIrr}, and that \(Y(\mathbb R)\) and \(\Delta(\mathbb R)\) each have three connected components in Example~\ref{exmp:irr3oval}. So \(Q_1Q_3-Q_2^2\) is negative inside \(\Delta\) in both of these examples. Before proving that the IJT obstruction vanishes in these examples, we first review the strategy of \cite{FJSVV}. Recall from Section~\ref{sec:IJT-background} that \cite[Section 2]{FJSVV} gives an explicit description of points on the intermediate Jacobian torsor \(\Ptildem{1}\). Applying their criterion, Frei--Ji--Sankar--Viray--Vogt show that IJT obstruction also fails to characterize rationality for geometrically standard conic bundles over \(\mathbb P^2\) by constructing an example of a conic bundle whose real locus has two connected components and such that there is a Galois-invariant set of four points of \(\Deltatilde\) spanning a \(3\)-plane in \(\mathbb P^4\) and whose pushforward under \(\piDelta\) is \(\Delta\cap(w=0)\). Thus, they explicitly exhibit a real point on the intermediate Jacobian torsor \(\Ptildem{1}\).
We apply their methods to Examples~\ref{exmp:DeltaemptyIrr} and \ref{exmp:irr3oval}:

\begin{prop}\label{prop:IJTvanishirrational}
All the intermediate Jacobian torsors are trivial in Examples~\ref{exmp:DeltaemptyIrr} and \ref{exmp:irr3oval}. In particular, these conic bundles have no IJT obstruction to rationality, and the real locus of \(Y\) exhibits irrationality.
\end{prop}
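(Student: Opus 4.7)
The plan is to exhibit an \(\mathbb R\)-point on \(\Ptildem{1}\) in each of the two examples. Since \(\Pm{1}(\mathbb R)\neq\emptyset\) by Lichtenbaum's theorem, \(P(\mathbb R)\) contains the origin, and the relation \(\Ptildem{1}=\Ptilde+\Pm{1}\) of \(P\)-torsors recalled in Section~\ref{sec:IJT-background} forces \(\Ptilde(\mathbb R)\neq\emptyset\) once \(\Ptildem{1}(\mathbb R)\neq\emptyset\). Hence producing a single \(\mathbb R\)-point on \(\Ptildem{1}\) suffices to establish triviality of all four intermediate Jacobian torsors.

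To produce the desired point, I will follow \cite[Theorem 1.3(1)]{FJSVV} and apply the geometric description of \(\Ptildem{1}(\mathbb R)\) recalled in Section~\ref{sec:IJT-background}: it is enough to exhibit a real line \(\ell\subset\mathbb P^2\) together with a Galois-invariant set of four points \(p_1,\ldots,p_4\in\Deltatilde(\mathbb C)\subset\mathbb P^4\) satisfying \(\piDelta_*(p_1+\cdots+p_4)=\Delta\cap\ell\), subject to the non-degeneracy condition that the \(p_i\) do not lie on a \(2\)-plane in \(\mathbb P^4\). For each example I will search for a simple real linear form \(\ell\) (typically a coordinate hyperplane or small perturbation thereof) for which \(\ell\cap\Delta\) consists of four distinct complex points admitting a Galois-invariant lift to \(\Deltatilde\).

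For Example~\ref{exmp:DeltaemptyIrr}, since \(\Delta(\mathbb R)=\emptyset\) any real line \(\ell\) automatically meets \(\Delta\) in two pairs of complex conjugates; selecting the preimages in \(\Deltatilde\) to also be complex conjugate (via the Bruin equations \(r^2=Q_1\), \(rs=Q_2\), \(s^2=Q_3\)) produces a Galois-invariant quadruple by construction. For Example~\ref{exmp:irr3oval}, the three ovals of \(\Delta(\mathbb R)\) are bounded in an affine chart (see Figure~\ref{fig:3oval-IJT}), so a real line \(\ell\) lying in the complement of all three ovals meets \(\Delta\) only in non-real points, again yielding automatic Galois-invariance. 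This is consistent with the constraint of \cite[Lemma 5.1]{FJSVV} coming from \(\Deltatilde(\mathbb R)=\emptyset\) in this example (which holds because \(Q_1Q_3-Q_2^2<0\) inside the ovals forces the locus where \(Q_1\ge 0\) to be disjoint from \(\Delta(\mathbb R)\)).

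The principal obstacle is verifying the non-planarity condition: that the chosen four points span a \(3\)-plane rather than a \(2\)-plane in \(\mathbb P^4\). For a specific \(\ell\) this reduces to checking that the \(4\times 5\) matrix of homogeneous coordinates of \(p_1,\ldots,p_4\) has rank \(4\), a condition which holds generically but must be verified by explicit computation. This verification is carried out in the accompanying \texttt{Sage} code \cite{JJ-code}: one solves for \(\ell\cap\Delta\) explicitly, extracts for each intersection point the two preimages \((r,s)\) determined by the Bruin equations, selects a Galois-invariant quadruple, and evaluates the rank of the resulting matrix. Provided the chosen line \(\ell\) is sufficiently generic, the non-planarity condition will hold and the proof is complete.
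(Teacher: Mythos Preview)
Your approach is the same as the paper's: exhibit a real point on \(\Ptildem{1}\) by choosing a real line \(\ell\), lifting \(\Delta\cap\ell\) to a Galois-invariant quadruple on \(\Deltatilde\subset\mathbb P^4\), and checking that the four lifts span a \(3\)-plane via a rank-\(4\) computation. The paper makes this concrete with the specific choices \(\ell=(w=0)\) for Example~\ref{exmp:DeltaemptyIrr} (where the four lifts and the nonvanishing \(4\times 4\) minor are written down exactly) and \(\ell=(v=0)\) for Example~\ref{exmp:irr3oval} (where the computation is numerical, via the accompanying code). Your proposal identifies the strategy correctly but stops short of naming the lines and carrying out the rank check; since the proposition concerns two fixed examples, the proof is not complete until those specific verifications are actually performed rather than asserted to hold ``provided the chosen line \(\ell\) is sufficiently generic.''

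One small correction to a side remark: your argument that \(\Deltatilde(\mathbb R)=\emptyset\) in Example~\ref{exmp:irr3oval} because ``\(Q_1Q_3-Q_2^2<0\) inside the ovals forces the locus where \(Q_1\ge 0\) to be disjoint from \(\Delta(\mathbb R)\)'' does not follow as stated; the sign of \(Q_1Q_3-Q_2^2\) off \(\Delta\) does not by itself determine the sign of \(Q_1\) on \(\Delta(\mathbb R)\). The clean way to see \(\Deltatilde(\mathbb R)=\emptyset\) here is that \(\pi_1(\mathbb R)\) is not surjective (from the signature table), so \(\pi_1\) has no section, hence \(\Deltatilde(\mathbb R)=\emptyset\) by Proposition~\ref{prop:Yproperties}\eqref{item:Deltatilde-point-section}.
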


We will use the line \((w=0)\) in Example~\ref{exmp:DeltaemptyIrr}, and the line \((v=0)\) in Example~\ref{exmp:irr3oval}. In the argument for Example~\ref{exmp:irr3oval} below, we exhibit a point on \(\Ptildem{1}\) using the \texttt{Sage} code \texttt{P1tilde.sage} in \cite{JJ-code}; this code checks the rank of the matrix obtained from a Galois-invariant set of four points on \(\Deltatilde(\mathbb C)\) mapping to \(\Delta\cap\ell\) for any line \(\ell\) defined over \(\mathbb Q\) and that meets \(\Delta\) in four distinct complex points. In our \texttt{GitHub} repository \cite{JJ-code}, we also include the code \texttt{P1tilde-bitangents.sage}, which does the analogous check when \(\ell\) is a real bitangent of \(\Delta\); the code that computes the real bitangents of \(\Delta\) is due to Plaumann--Sturmfels--Vinzant and is included in the supplementary material for their paper \cite{PSV-quartics}.

\begin{proof}[Proof for Example~\ref{exmp:DeltaemptyIrr} (No ovals, \(Y(\mathbb R)=\emptyset\))] The quartic curve \(\Delta\) is defined by the equation \(u^4 + 9u^2v^2 + 7u^2w^2 + 8v^4 + 14v^2w^2 + 2w^4=0\), and the intersection \(\Delta\cap(w=0)\) consists of the four complex points
\([-i:1:0], [i:1:0], [-2i\sqrt{2}:1:0],[2i\sqrt{2}:1:0]\).
One verifies that the set
\[[i:1:0:0:i\sqrt{7}], \quad [-i:1:0:0:-i\sqrt{7}], \quad
[2i\sqrt{2}:1:0:\sqrt{7}:\sqrt{7}], \quad [-2i\sqrt{2}:1:0:\sqrt{7}:\sqrt{7}]\]
of four points of \(\Deltatilde\) is \(\Gal(\mathbb C/\mathbb R)\)-invariant and maps to \(\Delta\cap (w=0)\). Since
\[\det\begin{pmatrix}
i & 1 & 0 & i\sqrt{7} \\
-i & 1 & 0 & -i\sqrt{7} \\
2i\sqrt{2} & 1 & \sqrt{7} & \sqrt{7} \\
-2i\sqrt{2} & 1 & \sqrt{7} & \sqrt{7}
\end{pmatrix} = -56\sqrt{2}\neq 0,\]
the four points above span a \(3\)-plane in \(\mathbb P^4\), so \(\Ptildem{1}(\mathbb R)\neq\emptyset\).
\end{proof}

\begin{proof}[Proof for Example~\ref{exmp:irr3oval} (Three ovals, \(Y(\mathbb R)\) three connected components)]
The quartic curve \(\Delta\) is defined by the equation \(96u^4 + 192u^3v + 132u^2v^2 + 36uv^3 + 15v^4 - 384u^3w - 448u^2vw - 136uv^2w - 96v^3w + 408u^2w^2 + 152uvw^2 + 212v^2w^2 - 48uw^3 - 192vw^3 + 60w^4\), and its restriction to the line \((v=0)\) is given by \(96 u^4 - 384 u^3 w + 408 u^2 w^2 - 48 u w^3 + 60 w^4=0\), so \(\Delta\cap(v=0)\) consists of four points with approximate coordinates
\[[-0.01183 \pm 0.38575 i:0:1], \quad [2.0118 \pm 0.38575 i:0:1].\]
One can verify by hand or using \texttt{P1tilde.sage} in the accompanying code \cite{JJ-code} that the set
\[[-0.01183 \pm 0.38575 i:0:1 : -0.35355 \mp 2.20794i : 1.97589 \pm 9.48178 i],\]
\[[2.0118 \pm 0.38575 i:0:1 : 0.35355 \mp 2.20794 i :  1.97589 \mp 9.48178 i]\]
is a \(\Gal(\mathbb C/\mathbb R)\)-invariant set of points on \(\Deltatilde(\mathbb C)\) mapping to \(\Delta\cap(v=0)\), and they span a 3-plane in \(\mathbb P^4\) since
\[\det \begin{pmatrix}-0.01183 + 0.38575 i & 1 & -0.35355 - 2.20794i & 1.97589 + 9.48178 i \\
-0.01183 - 0.38575 i & 1 & -0.35355 + 2.20794 i & 1.97589 - 9.48178 i \\
2.0118 + 0.38575 i & 1 & 0.35355 - 2.20794 i &  1.97589 - 9.48178 i \\
2.0118 - 0.38575 i & 1 & 0.35355 + 2.20794 i & 1.97589 + 9.48178 i\end{pmatrix} \approx
-359.61663 \neq 0.\]

In this example, \(Y\) does not contain any real points above the line \((v=0)\), as illustrated in Figure~\ref{fig:3oval-IJT}. Thus, one can see concretely that the Galois-invariant geometric section constructed from the point on \(\Ptildem{1}\) by \cite[Proposition 4.5]{FJSVV} does not descend to a real section.

\end{proof}
	
\subsection{Construction of rational examples}\label{sec:RationalExamples}

In this section, we construct rational examples of double covers as in Section~\ref{sec:DoubleCover}.
We will focus on two cases: (1) \(\Deltatilde\) has no real points, and (2) \(Q_1Q_3-Q_2^2<0\) inside \(\Delta\).

Recall that if \(\Deltatilde\to\Delta\) is any \'etale double cover of a smooth plane quartic, then Theorem~\ref{thm:bruin} (\cite{bruin}) shows that it can be realized in the form Equation~\eqref{eqn:DeltatildeDelta}. If \(\Deltatilde\) has a point, then the corresponding double cover is automatically rational by Proposition~\ref{prop:Yproperties}\eqref{item:Deltatilde-point-section}. Thus, the question is more interesting when \(\Deltatilde(\mathbb R)=\emptyset\).

We also consider examples where \(Q_1Q_3-Q_2^2<0\) inside \(\Delta\) (Notation~\ref{notation:ovals}) because Section~\ref{sec:IJT-sufficient} already gives a rationality criterion in the opposite case. (Recall this never happens for four ovals by Lemma~\ref{lem:4ovals-outside}.) At least one of \(Y_{\Deltatilde/\Delta}\) or the twisted double cover \(Y_{\Deltabar/\Delta}\) is rational. When \(\Delta\) is two non-nested ovals or three ovals, then exactly one of the two is rational and the other has disconnected real locus; the twisted double cover corresponding to \cite[Theorem 1.3(1)]{FJSVV} and Example~\ref{exmp:irr3oval} give such examples.

When \(\Delta\) is one oval and \(Q_i\) are quadratic forms such that \(Y_{\Deltatilde/\Delta}\) has the properties in \cite[Example 1.5]{FJSVV} (see Remark~\ref{rem:FJSVVexmp1.5}), then the twisted cover produces a rational example with \(\Deltabar(\mathbb R)\neq\emptyset\) and \(Q_1Q_3-Q_2^2<0\) inside \(\Delta\). However, we are not able to construct a one oval example with \(\Deltabar(\mathbb R)=\emptyset\), see Remark~\ref{rem:1oval-rational-question}. In the two nested ovals case, we construct examples both with and without points on \(\Deltatilde\). In particular Example~\ref{exmp:rational2nestedovals}\eqref{item:2nested-Deltatildeempty} exhibits an example where \(\pi_1\colon Y\to\mathbb P^1\) has a section that does not come from any known rationality construction: \(\Deltatilde(\mathbb R)=\emptyset\), and moreover the conic bundle \(Y_\ell\to\ell\) over any real line \(\ell\subset\mathbb P^2\) has no real sections, so the rationality construction does not come from Section~\ref{sec:dP2}.

Throughout, we will compute the signatures of the fibers of the quadric surface bundle \(\pi_1\) to show that it is surjective on real points. Theorem~\ref{thm:witt} (\cite{Witt-quadratic-forms}) then implies that \(\pi_1\) has a real section; thus, \(Y\) is rational. We also note that in the following examples, with the exception of Example~\ref{exmp:rationalDeltaempty}, both \(Y_{\Deltatilde/\Delta}\) and the twisted double cover \(Y_{\Deltabar/\Delta}\) are rational.

\begin{exmp}[Rational example with \(\Delta(\mathbb R)=\emptyset\)]\label{exmp:rationalDeltaempty}
Let \(Q_1,Q_2,Q_3\) be as in Example~\ref{exmp:DeltaemptyIrr},
and let \(Y := Y_{\Deltabar/\Delta}\) be the twisted double cover defined in Definition~\ref{defn:TwistedDoubleCover}.
Then \(\Delta(\mathbb R)=\emptyset\), and \(\Gamma\) is defined by \(y^2=-t^6 + 2t^5 - 10t^4 + 4t^3 - 19t^2 + 30t - 54\). We note that \(\Gamma(\mathbb R)=\emptyset\), so in particular \(\Gamma\) has no real Weierstrass points. Every fiber of \(\pi_1\) has signature \((3,1)\), so \(\pi_1\) has a section.

In this example \((Q_1Q_3-Q_2^2\leq 0)_{\mathbb R}=\emptyset\), and the image of \(Y(\mathbb R)\to\mathbb P^2(\mathbb R)\) is \((-Q_1\geq 0)_{\mathbb R}=\mathbb P^2(\mathbb R)\).
\end{exmp}

\begin{figure}[h]
\caption{The image of \(Y(\mathbb R)\) on the chart \((w\neq 0)\) in Example~\ref{exmp:rational1oval} (left), Example~\ref{exmp:rational2nestedovals}\eqref{item:2nested-Deltatildeempty} (center), and Example~\ref{exmp:rational2nestedovals}\eqref{item:2nested-Deltatildepoint} (right). The region \((Q_1Q_3-Q_2^2\leq 0)_{\mathbb R}\) is in blue, \((Q_1\geq 0)_{\mathbb R}\) is in red, and the real bitangents of \(\Delta\) are shown in black.}
\includegraphics[height=50mm]{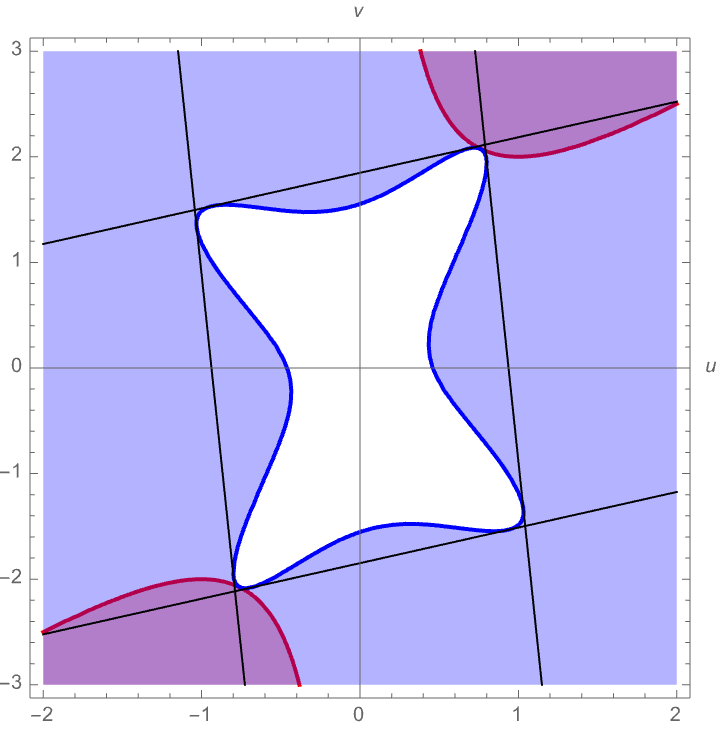}\quad
\includegraphics[height=50mm]{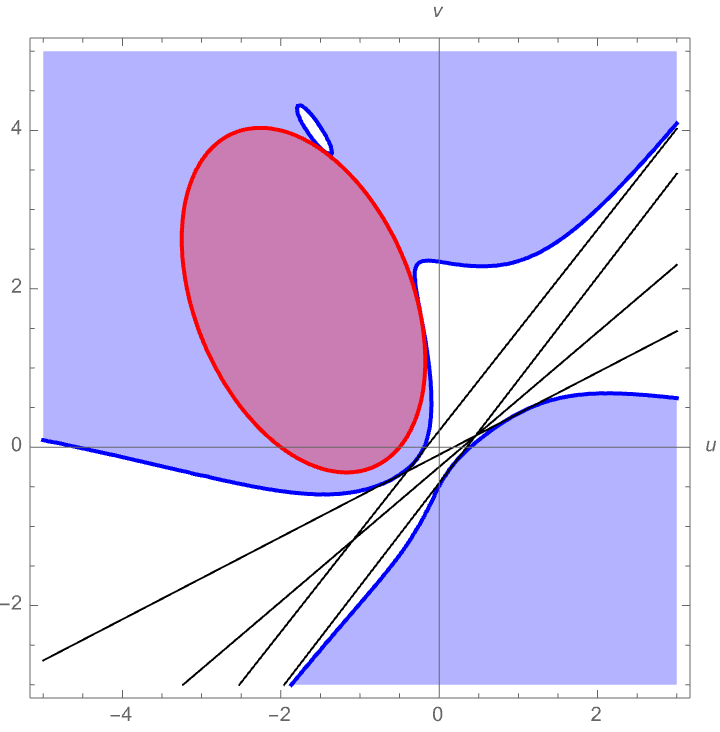}\quad
\includegraphics[height=50mm]{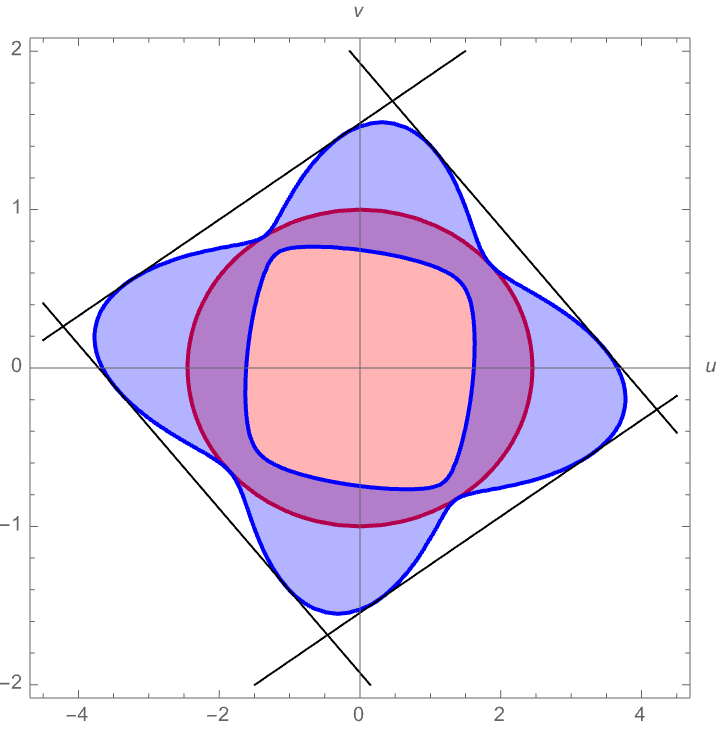}
\label{fig:rational-examples-1}
\end{figure}

\begin{exmp}[Rational example with \(\Delta(\mathbb R)\) one oval, \(\Deltatilde(\mathbb R)=\emptyset\), \(Q_1Q_3 - Q_2^2<0\) outside \(\Delta\)]\label{exmp:rational1oval}
Define
\[     Q_1 := - u^2  + uv - w^2, \quad
    Q_2 := 3u^2 + uv - v^2 +  w^2, \quad
    Q_3 := - u^2 - 2uv - 2w^2, \quad \]
and let \(\Deltatilde\to\Delta\) and \(Y_{\Deltatilde/\Delta}\) be as defined in Section~\ref{sec:DoubleCover}. Then \(\Delta(\mathbb R)\) is one oval, and we claim \(\Deltatilde(\mathbb R)\) is empty.

For this, since \(\Delta(\mathbb R)\) is connected and the zero locus \((Q_1 = 0)_{\mathbb R}\) is contained in \((Q_1Q_3-Q_2^2 \leq 0)_{\mathbb R}\), so it suffices to check that \(\Delta\) has an \(\mathbb R\)-point \(P\) such that \(Q_1(P) < 0\). Indeed, one verifies the restriction of \(Q_1\) to the line \((v=0)\) is negative definite, and \(\Delta(\mathbb R)\cap(v=0)_{\mathbb R}\neq\emptyset\). Next, the genus two curve \(\Gamma\) is defined by \(y^2=-\tfrac{1}{4}t^6 + \tfrac{3}{2}t^5 - \tfrac{29}{2}t^4 + 30t^3 - 33t^2 + 10t - 2\) and has no real points. Every fiber of \(\pi_1\) has signature \((1, 3)\), and so \(\pi_1(\mathbb R)\) is surjective and \(Y\) is rational.

The line \((w=0)\) is contained in \((Q_1Q_3-Q_2^2<0)_{\mathbb R}\) since the restriction of \(Q_1Q_3-Q_2^2\) to \((w=0)\) is the equation \(-8 u^4 - 5 u^3 v + 3 u^2 v^2 + 2 u v^3 - v^4\), which is always negative. Thus \((Q_1Q_3-Q_2^2<0)_{\mathbb R}\) cannot be orientable, so \(U_\Delta=(Q_1Q_3-Q_2^2<0)_{\mathbb R}\). This example is depicted in Figure~\ref{fig:rational-examples-2}.


\end{exmp}

\begin{exmp}[Rational examples with \(\Delta(\mathbb R)\) two nested ovals, \(Q_1Q_3-Q_2^2<0\) inside \(\Delta\)]\label{exmp:rational2nestedovals} $ $
\begin{enumerate}
\item\label{item:2nested-Deltatildeempty}
(\(\Deltatilde(\mathbb R)\) is empty.)
Let \(\Deltatilde\to\Delta\) and \(Y_{\Deltatilde/\Delta}\) be as defined in Section~\ref{sec:DoubleCover} for the quadrics
\begin{gather*}Q_1 := -4u^2 - 2uv - 2v^2 - 10uw + 4vw - 4w^2, \quad Q_2 := u^2 - 4uv - 3v^2 - 6uw + 2vw + 2w^2, \\
Q_3 := -u^2 - 6uv + 8uw - 6vw - 3w^2.\end{gather*}
Then \(\Delta(\mathbb R)\) is two nested ovals, and we claim that \(\Deltatilde(\mathbb R)\) is empty (see Figure~\ref{fig:rational-examples-1}).

To show \(\Deltatilde(\mathbb R)=\emptyset\), on the chart \((w \neq 0)\) we define the box \(B := \{(u, v)\ |\ -2 \leq u \leq -1,\ 3.5 \leq v \leq 4.5\}\). One checks that the boundary of \(B\) is disjoint from \((\Delta=0)_{\mathbb R}\), that the set \((\Delta=0)_{\mathbb R} \cap (Q_1 < 0)_{\mathbb R} \cap B\) contains an \(\mathbb R\)-point \((u, v)\) with \(v = 4\) and \(-2 < u < -1\), and the set \((\Delta=0)_{\mathbb R} \cap (Q_1 < 0)_{\mathbb R}\) contains an \(\mathbb R\)-point in the complement of \(B\) whose \(v\)-coordinate is \(1\).
In particular, there are points on both connected components of \((\Delta=0)_{\mathbb R}\) where \(Q_1\) is negative. Since \((Q_1 \leq 0)_{\mathbb R}\) is connected and \((Q_1 = 0)_{\mathbb R} \subset (Q_1Q_3-Q_2^2 \leq 0)_{\mathbb R}\), we have that \(Q_1 \leq 0\) on \((\Delta=0)_{\mathbb R}\).

Next, genus two curve \(\Gamma\) is defined by \(y^2=-58t^6 - 398t^5 - 677t^4 + 244t^3 + 394t^2 - 24t - 108\) and has no real points. All fibers of \(\pi_1\) have signature \((1, 3)\), so \(\pi_1(\mathbb R)\) is surjective.

In this example, \(Y\setminus Y_\Delta\) does not contain any real points above a real bitangent of \(\Delta\); in fact \(Y_\ell(\mathbb R)\to\ell(\mathbb R)\) is not surjective for any real line \(\ell\subset\mathbb P^2\). Thus, the section of \(\pi_1\) is not obtained from a curve lying over \(Y_\ell\) as in Section~\ref{sec:dP2}, and it is not constructed from a point on \(\Deltatilde\) as in Proposition~\ref{prop:Yproperties}\eqref{item:Deltatilde-point-section} since \(\Deltatilde\) has no points.



\item\label{item:2nested-Deltatildepoint}
(\(\Deltatilde\) has an \(\mathbb R\)-point.)
Let \(\Deltatilde\to\Delta\) and \(Y_{\Deltatilde/\Delta}\) be as defined in Section~\ref{sec:DoubleCover} for the quadrics
\[    Q_1 := -u^2  - 6v^2 + 6w^2, \quad
    Q_2 := -u^2 +uv+ 3v^2 + w^2, \quad
    Q_3 := -2u^2 - 6v^2 + 6w^2.\]
Then \(\Delta(\mathbb R)\) is two nested ovals, the genus two curve \(\Gamma_{\Deltatilde/\Delta}\) is defined by the equation \(y^2=-36t^6 - 48t^5 - 78t^4 - 46t^3 - 102t^2 - 24t - 72\) and has no real branch points, and every fiber of \(Y_{\Deltatilde/\Delta}\to\mathbb P^1\) has signature \((1,3)\). Figure~\ref{fig:rational-examples-1} gives a visual depiction of this example. In this case, none of the real bitangents of \(\Delta\) give sections of \(\pi_1\) as in Section~\ref{sec:dP2}; however, \(\Deltatilde\) has \(\mathbb R\)-points, which can be used to construct sections of \(\pi_1\).

In this case, the image of \(\piDelta(\mathbb R)\) is one oval of \(\Delta(\mathbb R)\). To show this, we consider the points \(P_0 := [0:0:1]\in (Q_1>0)_{\mathbb R}, P_1:= [0:1:1]\in (Q_1>0)_{\mathbb R},\) and \(P_2:= [0:2:1]\in (Q_1>0)_{\mathbb R}\). Since \((Q_1Q_3-Q_2^2)(P_0)=35>0\), \((Q_1Q_3-Q_2^2)(P_1)=-16<0\), and \((Q_1Q_3-Q_2^2)(P_2)=115>0\), and since we know that \((Q_1=0)_{\mathbb R}\) is contained in \((Q_1Q_3-Q_2^2\leq 0)_{\mathbb R}\), we conclude that \(Q_1\) is nonnegative on exactly one of the two ovals, and is nonpositive on the other. Thus, \(\Deltatilde\) and \(\Deltabar\) both have real points, so both \(Y_{\Deltatilde/\Delta}\) and \(Y_{\Deltabar/\Delta}\) are rational.
The twisted double cover \(Y_{\Deltabar/\Delta}\) has the property that \(\Gamma_{\Deltabar/\Delta}\) has no Weierstrass points, and the fibers of \(Y_{\Deltabar/\Delta}\to\mathbb P^1\) all have signature \((2,2)\).
\end{enumerate}
\end{exmp}

\begin{rem}
One can check using \texttt{P1tilde-bitangents.sage} in \cite{JJ-code} that in the preceding Examples~\ref{exmp:rationalDeltaempty}, \ref{exmp:rational1oval}, and \ref{exmp:rational2nestedovals}, \(\Ptildem{1}\) contains points mapping to all of the real bitangents. This gives rationality constructions for Examples~\ref{exmp:rationalDeltaempty} and \ref{exmp:rational1oval} by the proof of Proposition~\ref{prop:negative-outside-IJT}, but not for the ``points inside \(\Delta\)" conic bundles of Example~\ref{exmp:rational2nestedovals}.
\end{rem}

\begin{exmp}[Rational example with \(\Delta(\mathbb R)\) two non-nested ovals, \(\Deltatilde(\mathbb R)=\emptyset\)]\label{exmp:rational2nonnestedovals}
Define
\[     Q_1 := -u^2 + uv + v^2 + vw, \quad
    Q_2 := -2uv + vw + w^2, \quad
    Q_3 := u^2 - v^2 - 2uw, \]
and let \(\Deltatilde\to\Delta\) and \(Y_{\Deltatilde/\Delta}\) be as defined in Section~\ref{sec:DoubleCover}.
Then \(\Delta(\mathbb R)\) is two non-nested ovals.

To show that \(\Deltatilde(\mathbb R)=\emptyset\), we work on the chart \((w \neq 0)\).
One can verify that the lines \((w=0)\) and \((2v = -1)\) are disjoint from \((\Delta=0)_{\mathbb R}\); that the sets \(\Delta(\mathbb R)\cap(v=0)\) and \(\Delta(\mathbb R)\cap(v=-1)\) are both nonempty, and that \(Q_1\) is nonpositive on each of these two sets. Since \(\Delta(\mathbb R)\) has two connected components, we conclude that \(Q_1\) is nonpositive on \((\Delta=0)_{\mathbb R}\). See Figure~\ref{fig:rational-examples-2} for a visual depiction.

The genus two curve \(\Gamma\) is defined by \(y^2 = -\tfrac{1}{4}t^6 + \tfrac{3}{2}t^5 - \tfrac{17}{4} t^4 + 4t^3 - 2t^2 + 2t - 1\) and has two real branch points, and the signatures of the fibers have sequence \((2,2),(1,3)\). Therefore \(\pi_1(\mathbb R)\) is surjective.
\end{exmp}

\begin{figure}[h]
\caption{The regions \((Q_1Q_3-Q_2^2\leq 0)_{\mathbb R}\) in blue and \((Q_1\geq 0)_{\mathbb R}\) in red, in Example~\ref{exmp:rational2nonnestedovals} (left), Example~\ref{exmp:rational3ovals} (center), and Example~\ref{exmp:rational4ovals} (right), on the chart \((w\neq 0)\). The real bitangents of \(\Delta\) are shown in black. For each, the image of \(Y(\mathbb R)\) is \((Q_1Q_3-Q_2^2\leq 0)_{\mathbb R}\).}
\includegraphics[height=50mm]{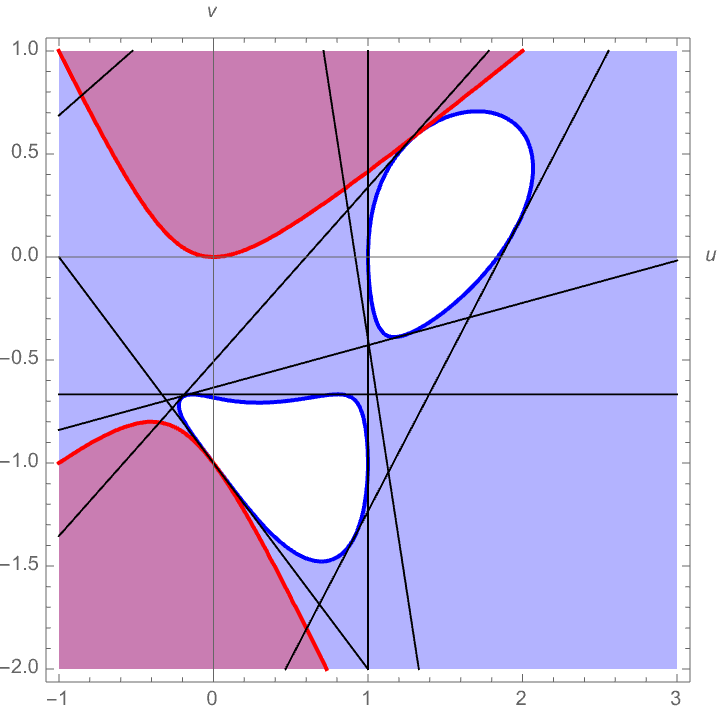}\quad
\includegraphics[height=50mm]{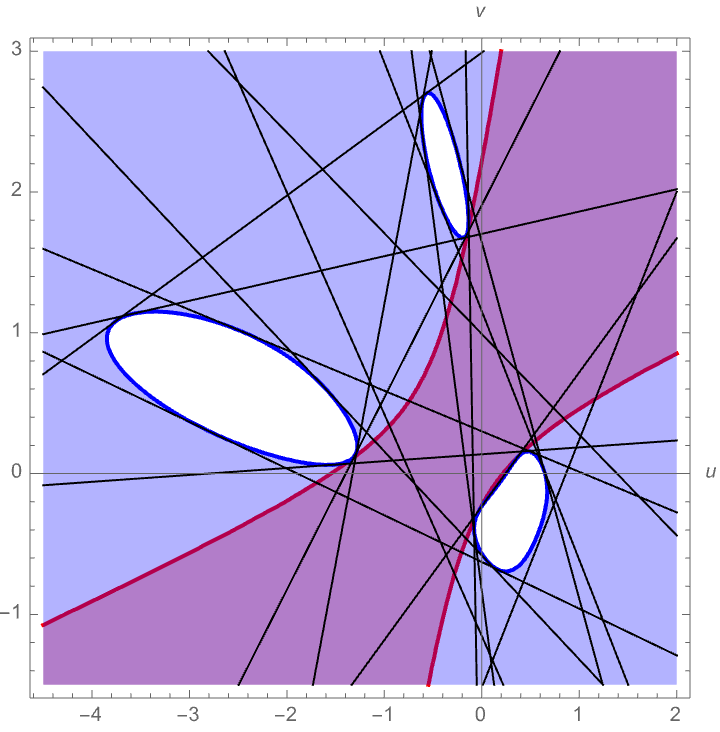}\quad
\includegraphics[height=50mm]{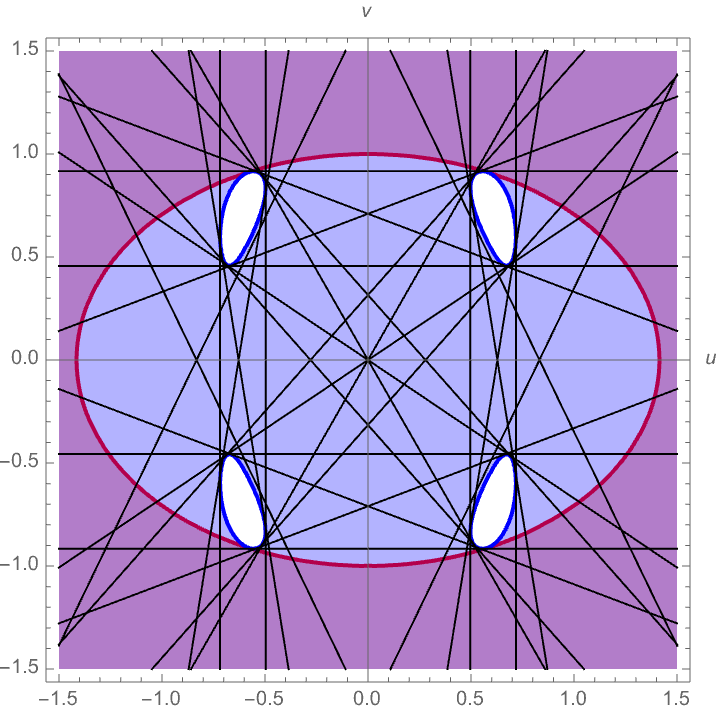}
\label{fig:rational-examples-2}
\end{figure}

\begin{exmp}[Rational example with \(\Delta(\mathbb R)\) three ovals, \(\Deltatilde(\mathbb R)=\emptyset\)]\label{exmp:rational3ovals}
Define
\begin{gather*}Q_1 := -3u^2 + 10uv - 2v^2 - 4uw + 4vw + w^2,\quad Q_2 := 5u^2 + 8uv + 5v^2 + 4uw - 6vw - 2w^2, \\ Q_3 := -2u^2 - 8uv - 2v^2 + 2uw + 2vw - 3w^2,\end{gather*}
and let \(\Deltatilde\to\Delta\) and \(Y_{\Deltatilde/\Delta}\) be as defined in Section~\ref{sec:DoubleCover}.
Then \(\Delta(\mathbb R)\) is three ovals.

To show that \(\Deltatilde(\mathbb R)=\emptyset\), one can argue as in Example~\ref{exmp:rational2nonnestedovals}, checking that on the chart \((w\neq 0)\) one oval is to the left of \((u=-1)\), the two ovals to the right of \((u=-1)\) are separated by the line \((v=1)\), and that \(Q_1 \leq 0\) on each oval.
The associated genus two curve \(\Gamma\) has equation \(y^2 = 39t^6 + 102t^5 - 1335t^4 + 1114t^3 + 47t^2 + 20t - 32\) and has four real Weierstrass points. The fibers of \(\pi_1\) have signature sequence \((1,3),(2,2),(1,3),(2,2)\), so \(\pi_1\) is surjective on real points. See Figure~\ref{fig:rational-examples-2}.
\end{exmp}

\begin{exmp}[Rational example with \(\Delta(\mathbb R)\) four ovals, \(\Deltatilde(\mathbb R)=\emptyset\)]\label{exmp:rational4ovals}
Define the quadrics
\[     Q_1 := u^2 + 2v^2 - 2w^2, \quad
    Q_2 := 3u^2 - w^2, \quad
    Q_3 := -2u^2 - v^2 + w^2,\]
and let \(\Deltatilde\to\Delta\) and \(Y_{\Deltatilde/\Delta}\) be as defined in Section~\ref{sec:DoubleCover}.
Then \(\Delta(\mathbb R)\) is four ovals.
To see that \(\Deltatilde(\mathbb R)=\emptyset\), one can work on the chart \((w \neq 0)\) and show that each quadrant of the the \((u,v)\) plane contains an oval, and then argue as in the previous examples. See Figure~\ref{fig:rational-examples-2} for a visual depiction.

Theorem~\ref{thm:DoubleCoverRationalityCriteria}\eqref{item:4ovals-rational} implies \(Y\) is rational. One can also check this explicitly: \(\Gamma\) is defined by \(y^2 = 4t^6 + 28t^5 + 12t^4 - 34t^3 - 3t^2 + 10t - 2\) and has six real branch points, and 
the fibers \(Y_{[t:1]}\) have signatures as shown below.
\begin{center}
\begin{tabular}{ c | c | c | c | c | c | c}
 \(t\) & \(-7\) & \(-3\) & \(-1\) & \(0\) & \(0.35\) & \(0.5\) \\ 
 \hline
 Signature & \((2,2)\) & \((1,3)\) & \((2,2)\) & \((1,3)\) & \((2,2)\) & \((1,3)\)
\end{tabular}
\end{center}
\end{exmp}

\begin{rem}
If \(\Delta(\mathbb R)\) is four ovals, then \(\Gamma\) has six real Weierstrass points by Proposition~\ref{prop:W-lines} and Corollary~\ref{cor:4ovals_real_lines}.
\end{rem}

\begin{rem}\label{rem:1oval-rational-question}
All the rational examples with \(\Deltatilde(\mathbb R)=\emptyset\) we constructed above have the property that \((Q_1=0)_{\mathbb R}\) intersects every component of \(\Delta(\mathbb R)\). One may wonder if a one oval example with \(Q_1Q_3-Q_2^2<0\) inside \(\Delta\) can be constructed in a similar manner. We have not been able to construct such a rational example: in every example we have found, \(\Gamma\) has two real Weierstrass points and \(\pi_1\) is not surjective on real points. \cite[Example 1.5]{FJSVV} gives an explicit example. One can also take the quadrics defined in Remark~\ref{rem:FJSVVexmp1.5}: here \((Q_1=0)_{\mathbb R}\) and \(\Delta(\mathbb R)\) have the points \([1:0:1]\) and \([-1:0:1]\) in common (see Figure~\ref{fig:1oval-unknown}), but \(Y\) has no real points over \([t:1]\in \mathbb P^1(\mathbb R)\) for \(1-\sqrt{2} < t <1+\sqrt{2}\).
\end{rem}

\begin{figure}[h]
\caption{The regions \((Q_1Q_3-Q_2^2\leq 0)_{\mathbb R}\) in blue and \((Q_1\geq 0)_{\mathbb R}\) in red for the quadrics defined in Remark~\ref{rem:FJSVVexmp1.5}, on the chart \((w\neq 0)\). Rationality is unknown in this case.}
\includegraphics[height=50mm]{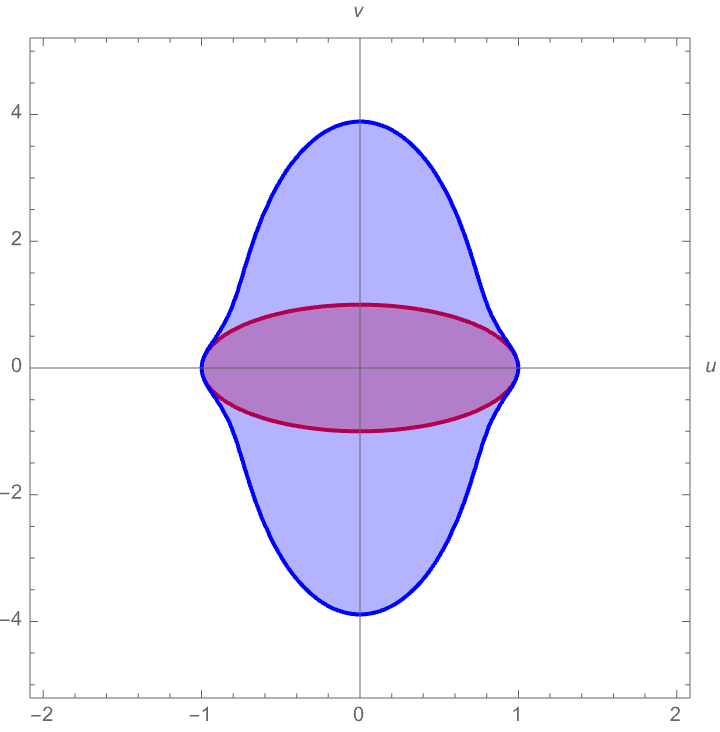}
\label{fig:1oval-unknown}
\end{figure}

\bibliographystyle{alpha}
\bibliography{references}

\newcommand{\etalchar}[1]{$^{#1}$}
\begin{thebibliography}{{Kol}97}

\bibitem[AG89]{AgrachevGamkrelidze}
A.~A. Agrach\"{e}v and R.~V. Gamkrelidze.
\newblock Quadratic mappings and smooth vector functions: {E}uler
  characteristics of level sets.
\newblock In {\em Current problems in mathematics. {N}ewest results, {V}ol. 35
  ({R}ussian)}, Itogi Nauki i Tekhniki, pages 179--239. Akad. Nauk SSSR,
  Vsesoyuz. Inst. Nauchn. i Tekhn. Inform., Moscow, 1989.
\newblock Translated in J. Soviet Math. {{\bf{5}}5} (1991), no. 4, 1892--1928.

\bibitem[BCR98]{BCR-realAG}
Jacek Bochnak, Michel Coste, and Marie-Fran\c{c}oise Roy.
\newblock {\em Real algebraic geometry}, volume~36 of {\em Ergebnisse der
  Mathematik und ihrer Grenzgebiete (3) [Results in Mathematics and Related
  Areas (3)]}.
\newblock Springer-Verlag, Berlin, 1998.
\newblock Translated from the 1987 French original, Revised by the authors.

\bibitem[Bru08]{bruin}
Nils Bruin.
\newblock The arithmetic of {P}rym varieties in genus 3.
\newblock {\em Compos. Math.}, 144(2):317--338, 2008.

\bibitem[BW]{bw-ij}
Olivier Benoist and Olivier Wittenberg.
\newblock {Intermediate Jacobians and rationality over arbitrary fields}.
\newblock {\em arXiv e-prints}, page arXiv:1909.12668.

\bibitem[BW20]{bw-cg}
Olivier Benoist and Olivier Wittenberg.
\newblock The {C}lemens-{G}riffiths method over non-closed fields.
\newblock {\em Algebr. Geom.}, 7(6):696--721, 2020.

\bibitem[CG72]{Clemens-Griffiths-ij}
C.~Herbert Clemens and Phillip~A. Griffiths.
\newblock The {I}ntermediate {J}acobian of the cubic threefold.
\newblock {\em Ann. of Math.}, 95(2):281--356, 1972.

\bibitem[Com12]{ComessattiFondamentiPL}
Annibale Comessatti.
\newblock Fondamenti per la geometria sopra le suerficie razionali dal punto di
  vista reale.
\newblock {\em Math. Ann.}, 73:1--72, 1912.

\bibitem[CTP90]{CTParimala}
J.-L. Colliot-Th\'{e}l\`ene and R.~Parimala.
\newblock Real components of algebraic varieties and \'{e}tale cohomology.
\newblock {\em Invent. Math.}, 101(1):81--99, 1990.

\bibitem[DK81]{DelfsKnebusch}
Hans Delfs and Manfred Knebusch.
\newblock Semialgebraic topology over a real closed field. {I}. {P}aths and
  components in the set of rational points of an algebraic variety.
\newblock {\em Math. Z.}, 177(1):107--129, 1981.

\bibitem[Dol12]{Dolgachev-CAG}
Igor~V. Dolgachev.
\newblock {\em Classical algebraic geometry}.
\newblock Cambridge University Press, Cambridge, 2012.
\newblock A modern view.

\bibitem[EKM08]{EKM}
Richard Elman, Nikita Karpenko, and Alexander Merkurjev.
\newblock {\em The algebraic and geometric theory of quadratic forms},
  volume~56 of {\em American Mathematical Society Colloquium Publications}.
\newblock American Mathematical Society, Providence, RI, 2008.

\bibitem[FJS{\etalchar{+}}]{FJSVV}
Sarah {Frei}, Lena {Ji}, Soumya {Sankar}, Bianca {Viray}, and Isabel {Vogt}.
\newblock {Curve classes on conic bundle threefolds and applications to
  rationality}.
\newblock {\em arXiv e-prints}, page arXiv:2207.07093.

\bibitem[FM02]{FriedmanMorgan}
Robert Friedman and John~W. Morgan.
\newblock Exceptional groups and del {P}ezzo surfaces.
\newblock In {\em Symposium in {H}onor of {C}. {H}. {C}lemens ({S}alt {L}ake
  {C}ity, {UT}, 2000)}, volume 312 of {\em Contemp. Math.}, pages 101--116.
  Amer. Math. Soc., Providence, RI, 2002.

\bibitem[HT21a]{HT-cycle}
Brendan Hassett and Yuri Tschinkel.
\newblock Cycle class maps and birational invariants.
\newblock {\em Comm. Pure Appl. Math.}, 74(12):2675--2698, 2021.

\bibitem[HT21b]{ht-intersection-quadrics}
Brendan Hassett and Yuri Tschinkel.
\newblock Rationality of complete intersections of two quadrics over nonclosed
  fields.
\newblock {\em Enseign. Math.}, 67(1-2):1--44, 2021.
\newblock With an appendix by Jean-Louis Colliot-Th\'{e}l\`ene.

\bibitem[Isk87]{Iskovskikh-rationality-cbs}
V.~A. Iskovskikh.
\newblock On the rationality problem for conic bundles.
\newblock {\em Duke Math. J.}, 54(2):271--294, 1987.

\bibitem[JJ]{JJ-code}
Lena Ji and Mattie Ji.
\newblock Code accompanying ``{R}ationality of real conic bundles with quartic
  discriminant curve''.
\newblock \url{https://github.com/lena-ji/ConicBundles}.

\bibitem[Kle76]{Klein1876}
Felix Klein.
\newblock \"{U}ber den verlauf der abelschen integrale bei den kurven vierten
  grades.
\newblock {\em Math. Ann.}, 10:365--397, 1876.

\bibitem[{Kol}97]{Koll'ar-real-surfaces}
J{\'a}nos {Koll{\'a}r}.
\newblock {Real Algebraic Surfaces}.
\newblock {\em arXiv e-prints}, pages alg--geom/9712003, December 1997.

\bibitem[KP]{KP-Fano}
Alexander {Kuznetsov} and Yuri {Prokhorov}.
\newblock {Rationality of Fano threefolds over non-closed fields}.
\newblock {\em arXiv e-prints}, page arXiv:1911.08949, November.

\bibitem[Kra18]{Krasnov-biquadrics}
V.~A. Krasnov.
\newblock On the intersection of two real quadrics.
\newblock {\em Izv. Ross. Akad. Nauk Ser. Mat.}, 82(1):97--150, 2018.

\bibitem[Man20]{Mangolte-realAG}
Fr\'{e}d\'{e}ric Mangolte.
\newblock {\em Real algebraic varieties}.
\newblock Springer Monographs in Mathematics. Springer, Cham, [2020] \copyright
  2020.
\newblock Translated from the 2017 French original [ 3727103] by Catriona
  Maclean.

\bibitem[Poo17]{Poonen-Q-points}
Bjorn Poonen.
\newblock {\em Rational points on varieties}, volume 186 of {\em Graduate
  Studies in Mathematics}.
\newblock American Mathematical Society, Providence, RI, 2017.

\bibitem[Pro18]{prokhorov-rationality-conic-bundles}
Yu.~G. Prokhorov.
\newblock The rationality problem for conic bundles.
\newblock {\em Uspekhi Mat. Nauk}, 73(3(441)):3--88, 2018.

\bibitem[PS99]{Poonen-Stoll}
Bjorn Poonen and Michael Stoll.
\newblock The {C}assels-{T}ate pairing on polarized abelian varieties.
\newblock {\em Ann. of Math. (2)}, 150(3):1109--1149, 1999.

\bibitem[PSV11]{PSV-quartics}
Daniel Plaumann, Bernd Sturmfels, and Cynthia Vinzant.
\newblock Quartic curves and their bitangents.
\newblock {\em J. Symbolic Comput.}, 46(6):712--733, 2011.

\bibitem[Wit37]{Witt-quadratic-forms}
Ernst Witt.
\newblock Theorie der quadratischen {F}ormen in beliebigen {K}\"{o}rpern.
\newblock {\em J. Reine Angew. Math.}, 176:31--44, 1937.

\bibitem[Zeu74]{Zeuthen1874}
Hieronymus~Georg Zeuthen.
\newblock Sur les diff\'{e}rentes formes des courbes planes du quatri\`{e}me
  ordre.
\newblock {\em Math. Ann.}, 7:410--432, 1874.

\end{thebibliography}

\end{document}